\newtheorem{theo}{Th\'eor\`eme}[section]
\newtheorem{prop}[theo]{Proposition}
\newtheorem{lem}[theo]{Lemme}
\newtheorem{coro}[theo]{Corollaire} 
\theoremstyle{definition}
\theoremstyle{remark} 
\newtheorem{rem}[theo]{Remarque}
\theoremstyle{definition}  
\numberwithin{equation}{section}
\newcommand{\Q}{\overline{\mathbb Q}}
\newcommand{\G}{{\bf G}}
\newcommand{\N}{\mathbb{N}}
\newcommand{\lambd}{{\boldsymbol{\lambda}}}
\newcommand{\f}{{\bf f}}
\renewcommand{\k}{{\bf{k}}}
\renewcommand{\v}{{\bf v }}
\newcommand{\ev}{{\rm ev}}
\newcommand{\w}{{\bf{w}}}
\newcommand{\g}{{\bf g}}
\title[]{M\'ethode de Mahler : relations lin\'eaires, transcendance et applications aux nombres automatiques}
\author{Boris Adamczewski} 
\address{
Univ Lyon, Universit\'e Claude Bernard Lyon 1\\
CNRS UMR 5208, Institut Camille Jordan  \\
43 blvd du 11 novembre 1918 \\
F-69622 Villeurbanne Cedex, France}
\email{Boris.Adamczewski@math.cnrs.fr}
\author{Colin Faverjon}
\email{colin.faverjon@ac-creteil.fr}
\begin{abstract} Cet article est consacr\'e \`a la m\'ethode de Mahler. Nous d\'ecrivons en d\'etail la structure des relations de d\'ependance 
lin\'eaire entre les valeurs aux points alg\'ebriques de fonctions mahl\'eriennes.  
\'Etant donn\'es un corps de nombres ${\bf k}$, une fonction mahl\'erienne $f(z)\in{\bf k}\{z\}$ et $\alpha$ un nombre alg\'ebrique, $0<\vert \alpha\vert <1$, 
qui n'est pas un p\^ole de 
$f$, nous montrons notamment que l'on peut toujours d\'eterminer si le nombre $f(\alpha)$ est transcendant ou non. 
Dans ce dernier cas, nous obtenons que $f(\alpha)$ appartient n\'ecessairement \`a l'extension ${\bf k}(\alpha)$. 
Nous consid\'erons \'egalement  les cons\'equences  de cette th\'eorie  
concernant un probl\`eme arithm\'etique classique :  l'\'etude de la suite des chiffres des nombres alg\'ebriques dans une base enti\`ere ou, 
plus g\'en\'eralement, alg\'ebrique. Nos r\'esultats sont obtenus \`a partir d'un th\'eor\`eme r\'ecent de Philippon \cite{PPH} que nous raffinons et 
dont nous simplifions la d\'emonstration. 
\end{abstract}
\thanks{This project has received funding from the European Research Council (ERC) under the European Union's Horizon 2020 research and innovation programme 
under the Grant Agreement No 648132. }
\begin{document}
\maketitle

\tableofcontents
\section{Introduction}

\'Etant donn\'e un entier $q\geq 2$, une fonction  $f(z)\in \overline{\mathbb Q}\{z\}$ est  dite {\it $q$-mahl\'erienne}  
s'il existe des polyn\^omes $p_0(z),\ldots , p_n(z)\in \overline{\mathbb Q}[z]$, non tous nul, 
tels que 
\begin{equation} \label{eq}
 p_0(z)f(z)+p_1(z)f(z^q)+\cdots + p_n(z)f(z^{q^n}) \ = \ 0. 
 \end{equation}  
 Afin d'\'etudier ces fonctions, il est souvent commode de consid\'erer les syst\`emes d'\'equations fonctionnelles de la forme : 

\begin{equation}\label{eq: systeme}
\left( \begin{array}{ c }
     f_1(z) \\
     \vdots \\
     f_n(z)
  \end{array} \right) = A(z)\left( \begin{array}{ c }
     f_1(z^q) \\
     \vdots \\
     f_n(z^q)
  \end{array} \right)  \, ,
\end{equation}
o\`u  $A(z)$ est une matrice de $\rm{GL}_n(\overline{\mathbb Q}(z))$  et les $f_i$ sont des fonctions de la variable 
$z$, analytiques au voisinage de $z=0$.  Un tel syst\`eme est appel\'e mahl\'erien. Par abus de langage, nous dirons que l'entier $n$ est l'ordre du syst\`eme (\ref{eq: systeme}). 
Une fonction $f(z)\in \overline{\mathbb Q}\{z\}$ est $q$-mahl\'erienne si, 
et seulement si, $f(z)$ est une coordonn\'ee 
d'un vecteur solution d'un syst\`eme mahl\'erien.   

Un nombre complexe $\alpha$ du disque unit\'e ouvert est une {\it singularit\'e} du syst\`eme (\ref{eq: systeme}) s'il existe un entier positif $\ell$ tel que 
$\alpha^{q^l}$ soit un p\^ole d'un coefficient d'une des matrices $A(z)$ ou $A(z)^{-1}$. 
L'ensemble des singularit\'es d'un syst\`eme mahl\'erien peut donc \^etre infini, 
mais il ne contient aucun point d'accumulation \`a l'int\'erieur du disque unit\'e. 
Un \'el\'ement du disque unit\'e complexe ouvert qui n'est pas une singularit\'e 
est dit {\it r\'egulier} pour le syst\`eme (\ref{eq: systeme}).

\medskip

La m\'ethode de Mahler, introduite \`a la fin des ann\'ees vingt,  vise  \`a prouver des r\'esultats de transcendance et d'ind\'ependance 
alg\'ebrique pour les valeurs aux points 
alg\'ebriques r\'eguliers  de telles fonctions.  
Pour les aspects classiques de la th\'eorie, nous renvoyons le lecteur \`a la monographie de Ku. Nishioka \cite{Ni_Liv}.   
\'Evidemment, les d\'efinitions pr\'ec\'edentes t\'emoignent d'une forte analogie avec les $E$-fonctions introduites par Siegel : 
les \'equations diff\'erentielles sont remplac\'ees par  des \'equations aux diff\'erences associ\'ees \`a l'endomorphisme injectif de 
$\mathbb C[[z]]$ d\'efini par $\sigma_q(f)=f(z^q)$.  Notons toutefois deux diff\'erences importantes. 
Tout d'abord, une fonction mahl\'erienne n'est pas une fonction enti\`ere (sauf si c'est un polyn\^ome), mais une fonction m\'eromorphe sur le disque unit\'e ouvert, 
le cercle unit\'e formant une fronti\`ere naturelle \cite{Ra92}. 
Ensuite, une fonction mahl\'erienne non nulle peut prendre des valeurs alg\'ebriques en un nombre infini de points alg\'ebriques du disque unit\'e ouvert. 
La th\'eorie pr\'esente toutefois dans son d\'eveloppement une forte analogie avec celle des $E$-fonctions. 
L'analogue du th\'eor\`eme de Siegel--Shidlovskii a finalement \'et\'e obtenu par Ku. Nishioka en 1990, apr\`es plusieurs r\'esultats partiels de diff\'erents auteurs dont 
Mahler, Kubota, Loxton et van der Poorten. 

\begin{theo}[Nishioka]\label{thm: nishioka}
Soient $f_1(z),\ldots,f_n(z)\in \Q\{z\}$  des fonctions analytiques convergentes sur le disque ouvert de rayon $\rho>0$  
et solutions d'un syst\`eme du type (\ref{eq: systeme}).   
Soit $\alpha\in\Q$, $0<\vert \alpha\vert <\rho$,  un point r\'egulier pour ce syst\`eme.    
Alors
$$
\mbox{\rm degtr}_{\Q} (f_1(\alpha),\ldots,f_n(\alpha)) = \mbox{\rm degtr}_{\Q(z)} (f_1(z),\ldots,f_n(z))\, .
$$
\end{theo}

\medskip

En 2006, Beukers \cite{Beu06} a obtenu, comme cons\'equence de travaux d'Andr\'e \cite{An1,An2}, une version 
raffin\'ee du th\'eor\`eme de Siegel--Shidlovskii.   
Ce r\'esultat remarquable stipule que toute relation alg\'ebrique sur $\Q$ entre les valeurs de $E$-fonctions solutions d'un m\^eme syst\`eme 
 diff\'erentiel,  en un point alg\'ebrique regulier pour ce syst\`eme, s'obtient comme  sp\'ecialisation en ce point d'une relation alg\'ebrique sur $\Q(z)$ entre ces $E$-fonctions.  
Une autre d\'emonstration de ce r\'esultat a r\'ecemment \'et\'e donn\'ee par Andr\'e dans \cite{An3}.  
Inspir\'e par ces travaux, ainsi que par ceux de  Nesterenko et Shidlovskii \cite{NS96}, 
Philippon \cite{PPH} a montr\'e comment  on peut d\'eduire un raffinement similaire, dans le contexte des syt\`emes mahl\'eriens, \`a partir du  th\'eor\`eme \ref{thm: nishioka}.   

\begin{theo}[Philippon]\label{thm: pph}
Soient $f_1(z),\ldots,f_n(z)\in \Q\{z\}$  des fonctions solutions d'un syst\`eme du type (\ref{eq: systeme}).   
Soit $\alpha\in\Q$, $0<\vert \alpha\vert <1$,  un point r\'egulier pour ce syst\`eme.    
Alors, pour tout  $P\in \Q[X_1,\ldots,X_n]$, de degr\'e total $d$, tel que $P(f_1(\alpha),\ldots,f_n(\alpha))=0$, 
il existe $Q\in \Q(z)[X_1,\ldots,X_n]$, de degr\'e total $d$ en $X_1,\ldots, X_n$, tel que 
$Q(z,f_1(z),\ldots,f_n(z))=0$ et $Q(\alpha,X_1,\ldots,X_n)=P(X_1,\ldots,X_n)$. 
\end{theo}

\begin{rem} 
Lorsque Philippon nous a indiqu\'e avoir d\'emontr\'e le th\'eor\`eme \ref{thm: pph} en janvier 2015, nous lui avons indiqu\'e  les applications que 
nous savions en tirer, \`a savoir le th\'eor\`eme \ref{thm: baker} et ses cons\'equences pour les nombres automatiques. 
Ce sont ces applications que nous pr\'esentons ici, ainsi que certains raffinements plus r\'ecents.  
Apr\`es cette discussion, Philippon a trouv\'e une autre approche lui permettant d'obtenir une version affaiblie du th\'eor\`eme 
 \ref{thm: baker}, laquelle est devenue le th\'eor\`eme 1.5 de \cite{PPH} (voir \'egalement la discussion \cite[p.\.4]{PPH}). 
\end{rem}

Dans cet article, nous montrons tout d'abord comment simplifier la d\'emonstration du th\'eor\`eme \ref{thm: pph} et en obtenir  
une version homog\`ene. Ce raffinement est vraiment l'exact analogue 
du th\'eor\`eme principal de Beukers dans \cite{Beu06}. Comme nous le verrons par la suite, disposer d'un \'enonc\'e homog\`ene 
s'av\`ere tr\`es utile pour l'\'etude des relations lin\'eaires. 

\begin{theo}
\label{thm: pphHomogene}
Soient $f_1(z),\ldots,f_n(z)\in \Q\{z\}$  des fonctions solutions d'un syst\`eme du type (\ref{eq: systeme}). 
Soit $\alpha\in\Q$, $0<\vert \alpha\vert <1$,  un point r\'egulier pour ce syst\`eme.  
Alors, pour tout  polyn\^ome homog\`ene $P\in \Q[X_1,\ldots,X_n]$ tel que 
$P(f_1(\alpha),\ldots,f_n(\alpha))=0$, 
il existe un polyn\^ome $Q \in {\Q}[z,X_1,\ldots,X_n]$, homog\`ene en $X_1,\ldots,X_n$, tel que
$Q(z,f_1(z),\ldots,f_n(z))=0$ et  $Q(\alpha,X_1,\ldots,X_n)=P(X_1,\ldots,X_n)$. 
\end{theo}

Le th\'eor\`eme \ref{thm: pphHomogene} implique le th\'eor\`eme \ref{thm: pph}.  
En effet, on peut toujours  transformer une relation inhomog\`ene en une relation homog\`ene en ajoutant au syst\`eme la fonction 
$f_{n+1}$ constante et \'egale \`a $1$. Dans ce nouveau syst\`eme, la matrice $A(z)$ est remplac\'ee par la matrice 
$$\left(\begin{array}{c|c} \begin{array}{cccc} 
\\ &  A(z)& &
\\ \\
 \end{array} & 0 
\\ \hline
\\0  & \begin{array}{ccc}1 
 \end{array}
\end{array}\right)$$
et l'ensemble des point r\'eguliers reste inchang\'e.   

Soit ${\bf k}$ un sous-corps de $\mathbb C$. Soit $\alpha$ un point du disque unit\'e complexe ouvert tel que les fonctions 
$f_1(z),\ldots,f_n(z)$ soient toutes d\'efinies en $\alpha$, 
c'est-\`a-dire, tel que $\alpha$ n'est p\^{o}le d'aucune de ces fonctions.  On d\'efinit le 
${\bf k}$-espace vectoriel des relations lin\'eaires entre les valeurs des fonctions $f_i$ au point $\alpha$ par :
$$
{\rm Rel}_{\bf k}(f_1(\alpha),\ldots,f_n(\alpha)) := \left\{ (\lambda_1,\ldots,\lambda_n) \in {\bf k}^n \mid  \sum_{i=1}^n\lambda_if_i(\alpha) =0 \right\} \, .
$$
On d\'efinit \'egalement le ${\bf k}(z)$-espace vectoriel des relations lin\'eaires fonctionnelles entre les $f_i(z)$ par :
$$
{\rm Rel}_{{\bf k}(z)}(f_1(z),\ldots,f_n(z)) := \left\{ (w_1(z),\ldots,w_n(z)) \in {\bf k}(z)^n \mid  \sum_{i=1}^nw_i(z)f_i(z) = 0 \right\} \,.
$$
Enfin, on note $\ev_\alpha$ l'application d'\'evaluation en $z=\alpha$. 
Dans le cas d'un polyn\^ome homog\`ene de degr\'e un, le th\'eor\`eme \ref{thm: pphHomogene} s'\'enonce alors de la fa\c con suivante.

\begin{coro}
\label{coro: pphLineaire}
Soit $\alpha\in\Q$, $0<\vert \alpha\vert <1$,  un point r\'egulier pour le syst\`eme~(\ref{eq: systeme}). On a : 
$$
{\rm Rel}_{\Q}(f_1(\alpha),\ldots,f_n(\alpha)) = \ev_\alpha \left({\rm Rel}_{\Q(z)}(f_1(z),\ldots,f_n(z)) \right) \, .
$$
En particulier, si les fonctions $f_1(z),\ldots,f_n(z)$ sont lin\'eairement ind\'ependantes sur $\Q (z)$, 
alors les nombres $f_1(\alpha),\ldots,f_n(\alpha)$ sont lin\'eairement ind\'ependants sur $\Q$.
\end{coro}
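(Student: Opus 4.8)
The plan is to deduce Corollary~\ref{coro: pphLineaire} from Theorem~\ref{thm: pphHomogene} by specializing to linear (degree-one) homogeneous polynomials, the inclusion $\supseteq$ being the easy direction and $\subseteq$ the substantive one.

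First I would establish the inclusion $\ev_\alpha\left({\rm Rel}_{\Q(z)}(f_1(z),\ldots,f_n(z))\right) \subseteq {\rm Rel}_{\Q}(f_1(\alpha),\ldots,f_n(\alpha))$. This is formal: given $(w_1(z),\ldots,w_n(z))\in{\rm Rel}_{\Q(z)}$, one has $\sum_{i=1}^n w_i(z)f_i(z)=0$ as a meromorphic function on the open unit disc. Since $\alpha$ is a regular point, it is not a pole of any $f_i$; after clearing denominators one may assume the $w_i$ are polynomials (multiplying a linear functional relation by a common denominator stays a functional relation), and then one must check $\alpha$ is not a common zero of that denominator forcing a $0/0$ issue — but in fact it suffices to take any representative of $(w_i)$ with $w_i\in\Q(z)$ regular at $\alpha$, which is always possible since $\Q(z)^n$ is spanned by such tuples, and evaluate directly to get $\sum_i w_i(\alpha)f_i(\alpha)=0$. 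A small point to handle carefully is that the image $\ev_\alpha$ of the whole space ${\rm Rel}_{\Q(z)}$ is the same as the image of its intersection with tuples regular at $\alpha$, because one can always scale by a scalar rational function vanishing suitably; I would spell this out in one line.

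Next, for the reverse inclusion, take $\lambd=(\lambda_1,\ldots,\lambda_n)\in{\rm Rel}_{\Q}(f_1(\alpha),\ldots,f_n(\alpha))$, so the linear form $P(X_1,\ldots,X_n)=\sum_{i=1}^n\lambda_i X_i$ is a homogeneous polynomial with $P(f_1(\alpha),\ldots,f_n(\alpha))=0$. Apply Theorem~\ref{thm: pphHomogene}: there is $Q(z,X_1,\ldots,X_n)\in\Q[z,X_1,\ldots,X_n]$, homogeneous in the $X_i$, with $Q(z,f_1(z),\ldots,f_n(z))=0$ and $Q(\alpha,X_1,\ldots,X_n)=P(X_1,\ldots,X_n)$. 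Since $Q(\alpha,\cdot)=P$ has degree one in the $X_i$ and $Q$ is homogeneous in $X_1,\ldots,X_n$, the homogeneous degree of $Q$ in the $X_i$ is exactly one (a higher-degree homogeneous form cannot specialize at $z=\alpha$ to a nonzero degree-one form, and $P\neq 0$ unless $\lambd=0$, in which case the statement is trivial). Hence $Q(z,X_1,\ldots,X_n)=\sum_{i=1}^n w_i(z)X_i$ for some $w_i(z)\in\Q[z]$ with $w_i(\alpha)=\lambda_i$, and $Q(z,f_1(z),\ldots,f_n(z))=0$ says precisely $(w_1(z),\ldots,w_n(z))\in{\rm Rel}_{\Q(z)}(f_1(z),\ldots,f_n(z))$. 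Thus $\lambd=\ev_\alpha(w_1(z),\ldots,w_n(z))$ lies in the right-hand side, proving $\subseteq$ and therefore equality.

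Finally, the ``in particular'' clause is immediate: if $f_1(z),\ldots,f_n(z)$ are linearly independent over $\Q(z)$, then ${\rm Rel}_{\Q(z)}(f_1(z),\ldots,f_n(z))=\{0\}$, so its image under $\ev_\alpha$ is $\{0\}$, whence ${\rm Rel}_{\Q}(f_1(\alpha),\ldots,f_n(\alpha))=\{0\}$, i.e.\ $f_1(\alpha),\ldots,f_n(\alpha)$ are linearly independent over $\Q$. The only genuine obstacle is Theorem~\ref{thm: pphHomogene} itself, which we are allowed to invoke; beyond that, the one subtlety worth not glossing over is the degree bookkeeping showing that the homogeneous polynomial $Q$ produced by the theorem has $X$-degree exactly one, so that it genuinely encodes a \emph{linear} functional relation rather than a higher-degree one.
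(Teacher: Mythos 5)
Your deduction is internally sound as far as it goes: both inclusions are handled correctly, and the degree bookkeeping (a polynomial $Q$ homogeneous in $X_1,\ldots,X_n$ that specializes at $z=\alpha$ to a nonzero linear form must itself have $X$-degree exactly one) is precisely the right point to verify. The problem is the direction in which you run the implication. In this paper the corollary is \emph{not} obtained as the degree-one case of Theorem~\ref{thm: pphHomogene}; it is restated as Theorem~\ref{thm: pphLineaire2} and proved first, directly from Philippon's inhomogeneous Theorem~\ref{thm: pph}, and Theorem~\ref{thm: pphHomogene} is then deduced \emph{from it} by applying Theorem~\ref{thm: pphLineaire2} to the system satisfied by the degree-$d$ monomials in the $f_i$ (a submatrix of the Hadamard power $A(z)^{\oplus d}$). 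Invoking Theorem~\ref{thm: pphHomogene} as a black box therefore makes your argument circular relative to the paper's logical structure: you assume exactly the statement whose proof in the paper passes through the corollary you are asked to prove.

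What your proposal skips is the actual content of the paper's proof. There one first treats the case where $f_1(z),\ldots,f_n(z)$ are linearly independent over $\Q(z)$: assuming a nontrivial relation $\sum_{i}\lambda_i f_i(\alpha)=0$, Theorem~\ref{thm: pph} gives an affine relation $\sum_i p_i(z)f_i(z)=r(z)$ with $p_i(\alpha)=\lambda_i$ and $r(\alpha)=0$ (and, after replacing $\alpha$ by a suitable $\alpha^{q^l}$, $r(\alpha^q)\neq 0$); conjugating the system by the matrix $S(z)$ whose last row is $(p_1(z),\ldots,p_n(z))$, one computes that the last row of $B(z)=S(z)A(z)S(z^q)^{-1}$ is $\left(0,\ldots,0,r(z)/r(z^q)\right)$, which vanishes at $\alpha$, contradicting the invertibility of $A(\alpha)S(\alpha^q)^{-1}$. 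The general case is then reduced to this one via Beukers' Lemma~\ref{alglineaire}, which furnishes a polynomial basis of ${\rm Rel}_{\Q(z)}(f_1(z),\ldots,f_n(z))$ whose evaluation matrix has full rank at every algebraic point, allowing one to split off a subsystem of linearly independent functions for which $\alpha$ is still regular. If you want your one-step specialization to stand, you must either supply an independent proof of Theorem~\ref{thm: pphHomogene} or carry out an argument of the above type; as written, the essential work is missing.
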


\begin{rem} Obtenir l'ind\'ependance lin\'eaire sur $\Q$ des nombres $f_1(\alpha),\ldots,f_n(\alpha)$ \`a partir du th\'eor\`eme \ref{thm: pph}, 
n\'ecessite une condition plus forte : l'ind\'ependance lin\'eaire sur  $\Q(z)$ des fonctions $1,f_1(z),\ldots,f_n(z)$ (ou bien que l'une des $f_i$ soit constante).  
Une telle condition n'est en g\'en\'eral pas v\'erifi\'ee.  Par exemple, \`a tout automate fini on peut associer un 
syst\`emes mahl\'erien pour lequel la somme des fonctions $f_i(z)$ est \'egale \`a $1/(1-z)$.  
Le th\'eor\`eme 1.7 de \cite{PPH} permet toutefois d'obtenir une telle conclusion dans un cas  tr\`es particulier : le syst\`eme doit admettre 
une matrice fondamentale de solutions dont les coefficients sont des fonctions analytiques dans le disque unit\'e ouvert.  
Contrairement \`a ce qui est affirm\'e dans \cite{PPH}, le corollaire \ref{coro: pphLineaire} montre qu'une telle restriction  
n'est pas n\'ecessaire.  
\end{rem}

Les th\'eor\`emes \ref{thm: pph} et \ref{thm: pphHomogene} permettent en fait d'obtenir des r\'esultats 
valables \'egalement en des points singuliers. 
Ainsi, un aspect remarquable  
du th\'eor\`eme suivant est  qu'aucune condition de r\'egularit\'e n'est requise pour le point $\alpha$. 
Nous donnerons en outre  deux d\'emonstrations diff\'erentes du point (i).

\begin{theo}\label{thm: baker}
Soient $f_1(z),\ldots,f_n(z)$ des fonctions $q$-mahl\'eriennes.   
Soient $\alpha\in\Q$, $0<\vert \alpha\vert <1$,  un nombre qui n'est p\^ole d'aucune de ces fonctions et ${\bf k}$ un corps de nombres contenant 
$\alpha$ ainsi que les coefficients des $f_i$.  
\begin{itemize}

\item[{\rm (i)}]Si les nombres $f_1(\alpha), \ldots,f_n(\alpha)$ sont lin\'eairement d\'ependants sur $\Q$, alors ils sont lin\'eairement d\'ependants sur 
${\bf k}$. 

\item[{\rm (ii)}] Plus pr\'ecis\'ement, on a  :
\begin{equation*}
{\rm Rel}_{\Q}\left(f_1(\alpha),\ldots,f_n(\alpha)\right) = {\rm Vect}_{\Q} \left\{ {\rm Rel}_{{\bf k}}\left(f_1(\alpha),\ldots,f_n(\alpha)\right)\right\}.
\end{equation*}
\end{itemize}
\end{theo}

En utilisant le fait que la fonction $g\equiv 1$ est $q$-mahl\'erienne pour tout $q\geq 2$, on obtient imm\'ediatement le r\'esultat suivant.

\begin{coro}\label{cor: alt}
Soient $f(z)$ une fonction $q$-mahlerienne et $\alpha\in {\Q}$, $0<\vert \alpha\vert<1$, qui n'est pas un p\^ole de $f$. Soit ${\bf k}$ un corps de 
nombres contenant $\alpha$ ainsi que les coefficients de $f$.  
On a l'alternative suivante : soit $f(\alpha)$ 
est transcendant, soit $f(\alpha)\in{\bf k}$. 
\end{coro}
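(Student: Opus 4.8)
The plan is to derive Corollaire~\ref{cor: alt} as an immediate consequence of Théorème~\ref{thm: baker}(i) applied to a well-chosen pair of functions. Concretely, I would take $n=2$, $f_1(z) = f(z)$ and $f_2(z) = g(z) \equiv 1$. Since $g$ satisfies the trivial Mahler equation $g(z) = g(z^q)$, it is $q$-mahlérienne for the same $q$ as $f$, so the hypotheses of Théorème~\ref{thm: baker} are met for the couple $(f,g)$: the point $\alpha$ is a pole of neither function, and one enlarges $\bf k$ (a number field) so as to contain $\alpha$ and the coefficients of $f$ — it automatically contains those of $g$.

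The dichotomy then comes from splitting on whether $f(\alpha)$ is algebraic or not. If $f(\alpha)$ is transcendental, we are in the first case of the alternative and there is nothing to prove. If $f(\alpha) \in \Q$, then the numbers $f_1(\alpha) = f(\alpha)$ and $f_2(\alpha) = 1$ are linearly dependent over $\Q$: indeed $1 \cdot f(\alpha) + (-f(\alpha)) \cdot 1 = 0$ is a nontrivial $\Q$-linear relation (nontrivial because the first coefficient is $1 \neq 0$). By Théorème~\ref{thm: baker}(i), the numbers $f(\alpha)$ and $1$ are then linearly dependent over $\bf k$, i.e.\ there exist $\lambda_1, \lambda_2 \in \bf k$, not both zero, with $\lambda_1 f(\alpha) + \lambda_2 = 0$. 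Here $\lambda_1 \neq 0$: otherwise $\lambda_2 = 0$ too, contradicting nontriviality. Hence $f(\alpha) = -\lambda_2/\lambda_1 \in \bf k$, which is the second case of the alternative.

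There is essentially no obstacle here; the only points to check are the bookkeeping ones just mentioned — that the constant function $1$ is $q$-mahlérienne with the required $q$, that the field $\bf k$ can be taken to contain the coefficients of both functions, and that the linear relations produced are genuinely nontrivial so that one can solve for $f(\alpha)$. The substance of the argument is entirely carried by Théorème~\ref{thm: baker}(i), whose role is precisely to descend a $\Q$-linear dependence to a $\bf k$-linear dependence without any regularity hypothesis on $\alpha$; the corollary is just the specialization of that statement to the presence of the auxiliary constant function.
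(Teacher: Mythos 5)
Your proof is correct and follows exactly the paper's route: the paper deduces Corollaire~\ref{cor: alt} from the point (i) of the théorème~\ref{thm: baker} applied to the pair $(f,g)$ with $g\equiv 1$, which is $q$-mahlérienne for every $q\geq 2$, precisely as you do. The bookkeeping you spell out (nontriviality of the relations, enlarging $\bf k$) is the same implicit content of the paper's one-line deduction.
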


Dans le cas particulier o\`u $f(z)$ est une s\'erie automatique et $\alpha$ est un nombre rationnel, ce r\'esultat a \'et\'e conjectur\'e par Cobham en 1968 \cite{Cob68}. 
C'est pr\'ecis\'ement cette conjecture qui est \`a l'origine du pr\'esent travail. Nous donnons davantage 
de d\'etails concernant l'histoire de ce probl\`eme dans l'appendice A.   
Le corollaire \ref{cor: alt} semble \^etre le premier r\'esultat de transcendance compl\`etement g\'en\'eral obtenu par la m\'ethode de Mahler 
(i.e.\ valable pour toute fonction 
mahl\'erienne et en tout point alg\'ebrique de son domaine de d\'efinition).  
D'autre part, des exemples montrent que l'on ne peut se soustraire \`a l'alternative pr\'esente dans la conclusion du corollaire \ref{cor: alt}, 
m\^eme en supposant la fonction $f(z)$ transcendante (voir section \ref{sec: ex}).  

\medskip

Nous pr\'ecisons ensuite le corollaire \ref{coro: pphLineaire} et d\'ecrivons en d\'etail l'espace vectoriel ${\rm Rel}_{{\bf k}}\left(f_1(\alpha),\ldots,f_n(\alpha)\right)$,  
m\^eme lorsque $\alpha$ est une singularit\'e du syst\`eme. 
\'Etant donn\'es un syst\`eme du type (\ref{eq: systeme}) et un entier $\ell\geq 1$, on pose 
$$
A_l(z) := A(z)A(z^q)\cdots A(z^{q^{l-1}})
$$ 
et  
$$
{\rm ker}_{\bf k} A_l(\alpha) := \left\{ (\lambda_1,\ldots,\lambda_n) \in {\bf k}^n\mid  (\lambda_1,\ldots,\lambda_n) A_l(\alpha) = 0\right\} \, .
$$
On fixe \'egalement un nombre r\'eel 
$\rho$, $0<\rho<1$, strictement  inf\'erieur au minimum des modules des p\^oles (non nuls) de la matrice $A(z)$ et des racines (non nulles) de son d\'eterminant. 
Ainsi, la matrice $A(z)$ est d\'efinie et inversible sur le disque \'epoint\'e $D(0,\rho)^\star$.  

\begin{theo}
\label{thm: StructureRelationsLineaires}
Soient $f_1(z),\ldots,f_n(z)$  des fonctions solutions d'un syst\`eme du type (\ref{eq: systeme}).  Soient $\alpha\in\Q$, $0<\vert \alpha\vert <1$,  
et ${\bf k}$ un corps de nombres  contenant 
$\alpha$ ainsi que les coefficients des $f_i$.
Soit $l$ un entier tel que $|\alpha^{q^l} | < \rho $. 
Si $\alpha$ n'est pas un p\^{o}le de $A_l(z)$, 
alors
\begin{equation*}
\label{StructureRelationsLineaires}
{\rm Rel}_{\bf k}(f_1(\alpha),\ldots,f_n(\alpha)) = {\rm ker}_{\bf k} A_l(\alpha) + {\rm ev}_\alpha \left({\rm Rel}_{{\bf k}(z)}(f_1(z),\ldots,f_n(z))\right) \,.
\end{equation*}
\end{theo}

La seule restriction dans le th\'eor\`eme pr\'ec\'edent  vient du fait que l'on doit supposer que le nombre $\alpha$ n'est pas un p\^{o}le de $A_l(z)$. 
En compl\'ement, nous montrons le r\'esultat suivant. 

\begin{theo}
\label{thm: paspole}
Soient $f_1(z),\ldots,f_n(z)$  des fonctions solutions d'un syst\`eme du type (\ref{eq: systeme}).  Soient $\alpha\in\Q$, $0<\vert \alpha\vert <1$,  
et ${\bf k}$ un corps de nombres  contenant 
$\alpha$ ainsi que les coefficients des $f_i$. Supposons que les $f_i$ soient d\'efinies au point $\alpha$. Soit  $l$ 
un entier tel que $|\alpha^{q^l} | < \rho $.   
Alors, il existe une matrice $B(z)\in{\rm GL}_n({\bf k}(z))$ satisfaisant aux conditions suivantes. 

\medskip

\begin{itemize}

\item[{\rm (i)}] On a : \begin{equation*}\label{eq: systeme2}
\left( \begin{array}{ c }
     f_1(z) \\
     \vdots \\
     f_n(z)
  \end{array} \right) = B(z)\left( \begin{array}{ c }
     f_1(z^{q^l}) \\
     \vdots \\
     f_n(z^{q^l})
  \end{array} \right)  \, .
\end{equation*}

\medskip

\item[{\rm (ii)}] Le point $\alpha$ n'est pas un p\^ole de $B(z)$. 

\medskip

\item[{\rm (iii)}] Le point $\alpha^{q{^l}}$ est r\'egulier pour le syst\`eme ${\rm (i)}$.

\end{itemize}

\medskip

En outre, si les fonctions $f_1(z),\ldots,f_n(z)$ sont lin\'eairement ind\'ependantes sur ${\bf k}(z)$, on a  
$B(z)=A_l(z)$. 
\end{theo}


Les th\'eor\`emes \ref{thm: baker} et \ref{thm: StructureRelationsLineaires} d\'ecrivent totalement la structure des relations lin\'eaires 
entre les valeurs de fonctions solutions d'un syst\`eme mahl\'erien en un point alg\'ebrique $\alpha$ de leur domaine d'holomorphie. 
Il en existe de deux sortes : les relations d'origine \og matricielle\fg{} et celles d'origine \og fonctionnelle\fg. 
Les relations matricielles sont les \'el\'ements de l'espace ${\rm ker}_{\Q} A_l(\alpha)$ et leur recherche se r\'eduit donc au calcul du noyau  
d'une matrice explicite. Les relations d'origine fonctionnelle correspondent aux \'el\'ements de l'espace 
${\rm ev}_\alpha \left({\rm Rel}_{{\Q}(z)}(f_1(z),\ldots,f_n(z))\right)$.  Pour les trouver, il faut donc \^etre capable de d\'eterminer une base de l'espace 
${\rm Rel}_{{\Q}(z)}(f_1(z),\ldots,f_n(z))$ des relations de d\'ependance lin\'eaire entre 
fonctions d'un syst\`eme mahl\'erien.  Rappelons que d\'ecrire les relations de d\'ependance alg\'ebrique entre les solutions d'un syst\`eme mahlerien 
est une t\^ache ardue, en t\'emoigne le peu de r\'esultats obtenus jusqu'\`a pr\'esent (voir par exemple \cite[Chap.\ 5]{Ni_Liv} et plus r\'ecemment \cite{BCZ,Ro}). 
\textit{A contrario}, nous montrerons que l'espace ${\rm Rel}_{{\Q}(z)}(f_1(z),\ldots,f_n(z))$ a une description simple.  
Il est engendr\'e par des relations lin\'eaires de \og petits degr\'es\fg, calculables de mani\`ere effective.
Plus pr\'ecis\'ement, le th\'eor\`eme \ref{BaseRelationsLin\'eaires1} montre que ${\rm Rel}_{{\Q}(z)}(f_1(z),\ldots,f_n(z))$ est isomorphe au 
noyau d'une matrice explicite. 


Finalement, les th\'eor\`emes  \ref{thm: StructureRelationsLineaires}, \ref{thm: paspole}, \ref{BaseRelationsLin\'eaires1} et \ref{BaseRelationsLin\'eaires2}, 
et leurs d\'emonstrations fournissent un algorithme permettant de r\'epondre en toute g\'en\'eralit\'e \`a la question suivante : 
\'etant donn\'es $f(z)$ une fonction mahl\'erienne\footnote{De fa\c con \'equivalente, $f(z)$ peut-\^etre 
donn\'ee par un syst\`eme du type \ref{eq: systeme} ou une \'equation du type \ref{eq}. } et $\alpha$, $0< \vert \alpha\vert <1$, un nombre alg\'ebrique,  $f(\alpha)$ est-il  
alg\'ebrique ou transcendant ?  Plus g\'en\'eralement, \'etant donn\'es des fonctions $q$-mahl\'eriennes $f_1(z),\ldots,f_n(z)$ et  $\alpha$, $0< \vert \alpha\vert <1$, un 
nombre alg\'ebrique qui n'est p\^ole d'aucune de ces fonctions, on peut d\'eterminer de fa\c con algorithmique une base de l'espace vectoriel 
${\rm Rel}_{\overline{\mathbb Q}}(f_1(\alpha),\ldots,f_n(\alpha))$.

\medskip

Cet article est organis\'e comme suit. Dans la section \ref{sec: rem}, nous revenons sur la d\'emonstration du th\'eor\`eme de Philippon, puis nous en d\'emontrons 
une version homog\`ene (le th\'eor\`eme \ref{thm: pphHomogene}) dans la section \ref{sec: pphhom}. Une premi\`ere d\'emonstration du point (i) 
du th\'eor\`eme \ref{thm: baker} est donn\'ee dans la section \ref{sec: pointi}, tandis que les th\'eor\`emes  \ref{thm: baker}, \ref{thm: StructureRelationsLineaires}  
et \ref{thm: paspole} sont d\'emontr\'es dans la section \ref{sec: preuves}. Dans la section \ref{sec: relfonc}, nous \'etudions les relations fonctionnelles 
de d\'ependance lin\'eaire entre les solutions d'un syst\`eme mahl\'erien et d\'emontrons les th\'eor\`emes \ref{BaseRelationsLin\'eaires1}  et \ref{BaseRelationsLin\'eaires2}. 
Dans la section \ref{sec: ex}, nous illustrons les   r\'esultats obtenus \`a travers l'\'etude de deux exemples de syst\`emes mahl\'eriens automatiques.  
Enfin, dans un appendice, nous donnons des \'el\'ements historiques 
concernant la conjecture de Cobham sur la suite des chiffres des nombres alg\'ebriques dans une base enti\`ere, ainsi que ses liens avec la m\'ethode de Mahler et 
la th\'eorie des automates finis. Cette conjecture,  
propos\'ee en 1968 et qui d\'ecoule du th\'eor\`eme \ref{thm: baker}, a \'et\'e la source principale de motivation pour le pr\'esent travail.    

\section{Remarques sur la d\'emonstration de Philippon}\label{sec: rem}

Nous revenons tout d'abord sur la d\'emonstration du th\'eor\`eme \ref{thm: pph} 
donn\'ee par Philippon dans \cite{PPH}.  Notre but est de montrer comment en simplifier l'exposition. 
La d\'emonstration de Philippon se d\'ecompose en deux parties principales. 
Tout d'abord, une premi\`ere \'etape consiste \`a montrer que le th\'eor\`eme est vrai pour tout point 
$\alpha$ appartenant \`a un certain voisinage  de l'origine. 
C'est ce que nous appellerons ici le \og{}th\'eor\`eme local\fg{} et qui correspond \`a la proposition 4.4 de \cite{PPH}. 
La seconde \'etape  est assez courte et correspond \`a la d\'emonstration du corollaire 4.5 dans \cite{PPH} ; il s'agit de  
montrer que le th\'eor\`eme local implique en fait le th\'eor\`eme global. L'id\'ee astucieuse introduite par Philippon est la suivante. 
Si $\alpha$ n'est pas une singularit\'e du syst\`eme, une relation alg\'ebrique entre les fonctions $f_i$ au point $\alpha$ se transporte naturellement, 
par it\'eration de l'\'equation fonctionnelle, en une relation alg\'ebrique aux points $\alpha^{q^l}$. Pour $l$ suffisamment grand, $\alpha^{q^l}$ 
appartient au domaine de validit\'e du th\'eor\`eme local, que 
l'on peut donc appliquer. Le fait que $\alpha$ ne soit pas une singularit\'e permet finalement d'obtenir, par it\'eration de la matrice inverse 
qui est bien d\'efinie,  le th\'eor\`eme au point $\alpha$.

Nous nous int\'eressons maintenant \`a la d\'emonstration du th\'eor\`eme local. 
Philippon suit la d\'emarche introduite par Nesterenko et Shidlovskii \cite{NS96} dans le cadre des $E$-fonctions. 
Nous nous proposons d'axiomatiser un peu celle-ci en extrayant de \cite{NS96} le r\'esultat suivant qui ne requiert 
la pr\'esence d'aucune \'equation diff\'erentielle ou fonctionnelle. La d\'emonstration de la proposition \ref{prop: NS} donn\'ee ici reprend les arguments de 
\cite{NS96} et \cite{PPH}. Il s'agit simplement d'une \'egalit\'e de dimension qui repose sur des principes de base d'alg\`ebre commutative et un r\'esultat de Krull \cite{Kr}. 
Elle est \'egalement \`a rapprocher du Corollary 1.7.1 obtenu par Andr\'e dans \cite{An3}. 

\begin{prop}\label{prop: NS}
Soient $f_1(z),\ldots, f_n(z)\in \Q\{z\}$. Supposons que les hypoth\`eses suivantes sont v\'erifi\'ees. 

\begin{itemize}

\medskip

\item[{\rm (i)}] Il existe $\rho>0$ tel que pour tout nombre alg\'ebrique $\alpha$, $0<\vert \alpha\vert<\rho$, on ait : 
$$
\mbox{\rm degtr}_{\Q} (f_1(\alpha),\ldots,f_n(\alpha)) =\mbox{\rm degtr}_{\Q(z)} (f_1(z),\ldots,f_n(z))\, .
$$
\medskip

\item[{\rm (ii)}] L'extension $\Q(z)(f_1(z),\ldots,f_n(z))$ est r\'eguli\`ere, ce qui signifie que tout \'el\'ement de $\Q(z)(f_1(z),\ldots,f_n(z))$ 
alg\'ebrique sur $\Q(z)$ est un \'el\'ement de $\Q(z)$. 
\end{itemize}
\medskip

Alors, il existe $\rho'>0$ tel que pour tout nombre alg\'ebrique $\alpha$, $0<\vert \alpha\vert<\rho'$ et  
tout  polyn\^ome $P\in \Q[X_1,\ldots,X_n]$, de degr\'e total $d$, tel que $P(f_1(\alpha),\ldots,f_n(\alpha))=0$, 
il existe $Q\in \Q(z)[X_1,\ldots,X_n]$, de degr\'e total $d$ en $X_1,\ldots, X_n$, tel que 
$Q(z,f_1(z),\ldots,f_n(z))=0$ et $Q(\alpha,X_1,\ldots,X_n)=P(X_1,\ldots,X_n)$. 
\end{prop}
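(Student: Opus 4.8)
The plan is to realize the situation geometrically. Consider the graph variety: let $\G\subset \sp\,\Q(z)[X_1,\ldots,X_n]$ be the Zariski closure over $\Q(z)$ of the point $(f_1(z),\ldots,f_n(z))$, i.e. the prime ideal $\p\subset\Q(z)[X_1,\ldots,X_n]$ of all polynomial relations over $\Q(z)$ satisfied by the $f_i$. Set $m:=\dim\G=\mbox{degtr}_{\Q(z)}(f_1(z),\ldots,f_n(z))$. Fix a finite generating set $Q_1,\ldots,Q_s$ of $\p$, and choose $\rho_0>0$ smaller than the modulus of any pole of any coefficient of any $Q_j$, so that for every algebraic $\alpha$ with $0<|\alpha|<\rho_0$ the specialized ideal $\p_\alpha:=(Q_1(\alpha,\X),\ldots,Q_s(\alpha,\X))\subset\Q[X_1,\ldots,X_n]$ is defined; let $\G_\alpha$ be its zero locus. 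By construction $(f_1(\alpha),\ldots,f_n(\alpha))\in\G_\alpha(\Q)$, so the closure of this point, cut out by $\mathrm{Rel}$-type ideal $\mathfrak{q}_\alpha:=\{P\in\Q[\X]\mid P(f_1(\alpha),\ldots,f_n(\alpha))=0\}$, satisfies $\p_\alpha\subset\mathfrak{q}_\alpha$; and hypothesis (i) says $\dim\Q[\X]/\mathfrak{q}_\alpha=m$ for $0<|\alpha|<\rho$. The whole point is to upgrade this equality of dimensions into the equality $\p_\alpha=\mathfrak{q}_\alpha$ (up to radical) on a possibly smaller punctured disc, since that is exactly the assertion to be proved: any $P$ vanishing at the point lies in the radical of $\p_\alpha$, hence a power of it — and then, after reducing to the prime components, $P$ itself — is an $\alpha$-specialization of some element of $\p$.

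The key steps, in order. First, a semicontinuity / generic-flatness step: shrinking $\rho_0$, arrange that the fibre dimension is constant, $\dim\G_\alpha=m$, for all $\alpha$ in a punctured disc — this uses that the generic fibre has dimension $m$ together with upper semicontinuity of fibre dimension, and is where one invokes Krull's theorem (\cite{Kr}) on the behaviour of dimension under specialization of a prime ideal of a polynomial ring over a field: the dimension can only go up under specialization of finitely many parameters, and it does so only on a proper closed subset of the parameter line, i.e. a discrete set, which near $0$ we absorb into the singularities. Second, combine $\dim\G_\alpha=m$ with hypothesis (i), which gives $\dim\mathfrak{q}_\alpha=m$: since $\p_\alpha\subset\mathfrak{q}_\alpha$ and $\mathfrak{q}_\alpha$ is prime of the same dimension as the (shrunk) fibre, $\mathfrak{q}_\alpha$ is a minimal prime of $\p_\alpha$; equivalently, $(f_1(\alpha),\ldots,f_n(\alpha))$ lies on an $m$-dimensional irreducible component of $\G_\alpha$. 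Third — and this is where hypothesis (ii) enters — use regularity of the field extension $\Q(z)(f_1,\ldots,f_n)/\Q(z)$ to deduce that $\G$ is geometrically irreducible, so that for $\alpha$ outside a further discrete set the specialized fibre $\G_\alpha$ is also irreducible (of dimension $m$) — a standard application of the fact that geometric irreducibility is an open condition on the base; regularity is precisely what prevents the generic fibre from "splitting" upon specialization. Then $\mathfrak{q}_\alpha$, being a minimal prime of $\p_\alpha$ of dimension $m$ in an irreducible $\G_\alpha$ of dimension $m$, must equal $\sqrt{\p_\alpha}$.

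Finally, the descent from radical to the polynomial $Q$ itself. Given $P$ homogeneous-free of total degree $d$ with $P(f_1(\alpha),\ldots,f_n(\alpha))=0$, we have $P\in\mathfrak{q}_\alpha=\sqrt{\p_\alpha}$, so $P^N=\sum_j R_j(\alpha,\X)\,Q_j(\alpha,\X)$ for some $N$ and some $R_j\in\Q(z)[\X]$ specializable at $\alpha$. Lifting, $P^N$ "is" the $\alpha$-specialization of the element $\sum_j R_j(z,\X)Q_j(z,\X)$ of $\p$ — but this naive lift need not have total degree $d$ and need not specialize to exactly $P$ on the nose; the degree bookkeeping has to be done by hand. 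The standard trick (this is the Nesterenko–Shidlovskii argument that Philippon adapts) is to work component-by-component: $\p=\p_\alpha^{\,\mathrm{lift}}$ being prime and $\mathfrak{q}_\alpha$ being one of its minimal primes, one chooses the lift so that $Q$ is taken in $\p$ with a linear form of $X$'s made monic in a suitable coordinate after a generic linear change of variables, matches leading terms with $P$, and subtracts; an induction on $d$ then produces $Q\in\Q(z)[\X]$ of total degree $d$ with $Q(z,f(z))=0$ and $Q(\alpha,\X)=P$, after possibly shrinking $\rho$ once more to $\rho'$ to avoid the finitely many bad denominators introduced.

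I expect the main obstacle to be the third step — controlling how the generic fibre of $\G$ decomposes under specialization. Getting $\dim\mathfrak{q}_\alpha=m$ from (i) is cheap, and the inclusion $\p_\alpha\subset\mathfrak{q}_\alpha$ is formal; but to conclude $\mathfrak{q}_\alpha=\sqrt{\p_\alpha}$ one genuinely needs that $\G_\alpha$ has a \emph{unique} top-dimensional component through the point, and that is exactly the content of geometric irreducibility of $\G$ (hypothesis (ii)) propagating to the fibre. The subtlety is that (ii) only guarantees absolute irreducibility of the \emph{generic} fibre, and one must invoke constructibility of the locus where irreducibility degenerates — again a discrete subset of the $z$-line that, near the origin, we fold into the exceptional set governing $\rho'$. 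Once the geometry is pinned down, the degree-matching descent is routine linear algebra on coefficients and is handled by the induction sketched above.
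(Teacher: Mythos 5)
Your overall strategy is the same as the paper's (and Nesterenko--Shidlovskii's): compare the prime ideal $\mathfrak P\subset\Q(z)[X_1,\ldots,X_n]$ of functional relations with the ideal $\mathfrak P_\alpha$ of relations at $\alpha$, get equality of dimensions from (i), and use the regularity hypothesis (ii) to ensure that primality/irreducibility survives specialization outside a discrete set (this is exactly what the paper extracts from Zariski--Samuel, Theorem 39, plus Krull's specialization theorem). Up to that point your plan is sound and essentially parallel to the paper.

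The genuine gap is in your endgame. What you establish is only that $\mathfrak q_\alpha=\sqrt{\mathfrak p_\alpha}$, where $\mathfrak p_\alpha$ is the ideal generated by the specializations $Q_1(\alpha,\cdot),\ldots,Q_s(\alpha,\cdot)$ of a chosen finite generating set; hence only that $P^N$ is a combination of specialized generators. But the proposition asserts much more: that $P$ itself is the value at $z=\alpha$ of an element of $\mathfrak P$ whose total degree in $X_1,\ldots,X_n$ is exactly $d$. Your proposed repair (generic linear change of variables, matching leading terms, subtracting, induction on $d$) is not an argument: nothing in it removes the exponent $N$, and the naive lift of a membership relation in $\mathfrak p_\alpha$ controls neither the degree in the $X_i$ nor the exact value at $\alpha$. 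Note also that $\mathfrak p_\alpha$ may be strictly smaller than the relevant object, namely the image ${\rm ev}_\alpha(\mathfrak P\cap\Q[z,X_1,\ldots,X_n])$ of the \emph{whole} ideal. The paper avoids both difficulties at once: it homogenizes $\mathfrak P$ and $\mathfrak P_\alpha$ in $X_1,\ldots,X_n$ with an extra variable $X_0$, so that a relation of total degree $d$ corresponds to a homogeneous element of degree $d$ and one can always pass to the degree-$d$ homogeneous component (this is what yields $Q$ of total degree exactly $d$ with $Q(\alpha,\cdot)=P$ on the nose); and it proves directly, via absolute primality of $\tilde{\mathfrak P}\cap\Q[z,X_0,\ldots,X_n]$ (from (ii)) and Krull, that ${\rm ev}_\alpha(\tilde{\mathfrak P}\cap\Q[z,X_0,\ldots,X_n])$ is a \emph{prime} ideal of the same rank as $\tilde{\mathfrak P}_\alpha$ (rank equality coming from (i) and a Hilbert-type argument as in Nesterenko--Shidlovskii), whence the two primes coincide. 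With equality of ideals, rather than of radicals of a specialized generating set, no descent from $P^N$ to $P$ and no induction on $d$ are needed. To make your proof complete you would have to replace your third and fourth steps by such an argument, or supply an actual proof of the degree-preserving descent you sketch.
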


\begin{proof}
On note $\mathfrak P$ l'id\'eal premier de $\Q(z)[X_1,\ldots,X_n]$ des relations alg\'ebriques sur $\Q(z)$ entre les fonctions $f_1(z),\ldots,f_n(z)$. 
\'Etant donn\'e $\alpha$ un nombre alg\'ebrique tel que les fonctions $f_i$ soient toutes d\'efinies en $\alpha$, on note \'egalement 
$\mathfrak P_{\alpha}$ l'id\'eal premier de $\Q[X_1,\ldots,X_n]$ des relations alg\'ebriques sur $\Q$ entre les nombres $f_1(\alpha),\ldots,f_n(\alpha)$. 
Soit $X_0$ une nouvelle ind\'etermin\'ee. On note $\tilde{\mathfrak P}\subset \Q(z)[X_0,\ldots,X_n]$ (respectivement $\tilde{\mathfrak P}_{\alpha}\subset \Q[X_0,\ldots,X_n]$) 
l'id\'eal 
homog\'en\'eis\'e en $X_1,\ldots,X_n$ de $\mathfrak P$ (respectivement de $\mathfrak P_{\alpha}$).  Rappelons que l'homog\'en\'eis\'e d'un id\'eal premier 
est un id\'eal premier de m\^eme rang.  Le rang d'un id\'eal premier $\mathfrak I$, qui est parfois \'egalement appel\'e la hauteur de $\mathfrak I$, est not\'e ici ${\rm rg}(\mathfrak I)$. 
On note \'egalement $\dim A$, la dimension de Krull d'un anneau commutatif unitaire 
$A$. Comme pr\'ec\'edemment, on note $\mbox{\rm ev}_{\alpha} : \Q[z] \rightarrow \Q$ l'application d'\'evaluation en $z=\alpha$. 

Avec ces notations, on v\'erifie que la conclusion du th\'eor\`eme est \'equivalente \`a l'\'egalit\'e 
\begin{equation}\label{eq: ev}
{\rm ev}_{\alpha}(\tilde{\mathfrak P}\cap \Q[z,X_0,\ldots,X_n])= \tilde{\mathfrak P}_{\alpha} \, ,
\end{equation}
pour tout nombre alg\'ebrique non nul $\alpha$ de module suffisamment petit. 
Comme l'anneau $\Q(z)[f_1(z),\ldots,f_n(z)]$ est de type fini et int\`egre, on a : 
\begin{eqnarray*}
\mbox{\rm degtr}_{\Q(z)} (f_1(z),\ldots,f_n(z))&=& \dim \Q(z)[f_1(z),\ldots,f_n(z)]\\
&=& \dim (\Q(z)[X_1,\ldots,X_n]/\mathfrak P) \\
& = &\dim(\Q(z)[X_1,\ldots,X_n]) - \mbox{\rm rg}(\mathfrak P) \\
 & = &\dim \Q[X_0,\ldots,X_n] - \mbox{\rm rg}(\tilde{\mathfrak P}) - 1.
\end{eqnarray*}
De fa\c con totalement similaire, il vient :
$$
\mbox{\rm degtr}_{\Q} (f_1(\alpha),\ldots,f_n(\alpha)) = \dim \Q[X_0,\ldots,X_n] - \mbox{\rm rg}(\tilde{\mathfrak P}_{\alpha}) - 1 \, .
$$
D'apr\`es (i), on en d\'eduit que pour tout nombre alg\'ebrique $\alpha$ tel que $0<\alpha<\rho$ : 
\begin{equation}\label{eq: rg}
\mbox{\rm rg} (\tilde{\mathfrak P})=\mbox{\rm rg} (\tilde{\mathfrak P}_{\alpha}) \,.
\end{equation} 
Compte tenu de (\ref{eq: rg}) et comme ${\rm ev}_{\alpha}(\tilde{\mathfrak P}\cap \Q[z,X_0,\ldots,X_n])\subset  \tilde{\mathfrak P}_{\alpha}$,  
il suffit de montrer que, pour tout $\alpha$ suffisamment petit, l'id\'eal  
${\rm ev}_{\alpha}(\tilde{\mathfrak P}\cap \Q[z,X_0,\ldots,X_n])$ est un id\'eal premier de m\^eme rang que $\tilde{\mathfrak P}$ 
pour obtenir (\ref{eq: ev}) et conclure. 

D'apr\`es (ii), l'extension $\Q(z)(f_1(z),\ldots,f_n(z))$ est r\'eguli\`ere, ce qui implique que l'id\'eal $\tilde{\mathfrak P}\cap \Q[z,X_0,\ldots,X_n])$ 
est absolument premier (voir \cite[Theorem 39]{ZS}). Un r\'esultat de Krull \cite{Kr} implique alors que l'id\'eal 
${\rm ev}_{\alpha}(\tilde{\mathfrak P}\cap \Q[z,X_0,\ldots,X_n])$ est premier 
pour tout $\alpha$ en dehors d'un ensemble fini. Lorsque l'id\'eal  ${\rm ev}_{\alpha}(\tilde{\mathfrak P}\cap \Q[z,X_0,\ldots,X_n])$ est premier, 
on peut montrer l'\'egalit\'e de rang 
$$
{\rm rg}({\rm ev}_{\alpha}(\tilde{\mathfrak P}\cap \Q[z,X_0,\ldots,X_n])) = {\rm rg}(\tilde{\mathfrak P})
$$
comme dans \cite{NS96} \`a l'aide d'un r\'esultat de Hilbert. Cela conclut la d\'emonstration.  
\end{proof}

Ainsi, pour obtenir le th\'eor\`eme local, il suffit de disposer du th\'eor\`eme \ref{thm: nishioka} de Nishioka et du lemme suivant. 
C'est pour d\'emontrer un r\'esultat analogue au lemme \ref{lem: reg} que Philippon a recours \`a la th\'eorie de Galois aux diff\'erences, notamment 
\`a l'utilisation des propositions 2.2,  3.2, 3.4 
et du lemme 3.5 dans \cite{PPH}. 
Nous donnons ci-dessous une d\'emonstration tr\`es courte de ce r\'esultat qui ne n\'ecessite aucun usage de la th\'eorie de 
Galois aux diff\'erences et simplifie notablement l'exposition donn\'ee dans \cite{PPH}. 

\begin{lem}\label{lem: reg}
Soient $f_1(z),\ldots,f_n(z)\in \Q\{z\}$  des fonctions solutions d'un syst\`eme du type (\ref{eq: systeme}).  Alors l'extension de corps 
$L:= \Q(z)(f_1(z),\ldots,f_n(z))$ est r\'eguli\`ere. 
\end{lem}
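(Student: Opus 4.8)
\emph{Proof plan.} In characteristic zero an extension is regular precisely when its base field is algebraically closed in it, so it suffices to prove that $K:=\Q(z)$ is algebraically closed in $L$. Let $M$ be the algebraic closure of $K$ in $L$. Since $L=K(f_1,\dots,f_n)$ is a finitely generated extension of $K$, the extension $M/K$ is finite, of degree $D$ say; I will show $D=1$. The one non-formal ingredient is the Mahler equation, which enters through the operator $\sigma_q\colon h(z)\mapsto h(z^q)$. Rewriting~(\ref{eq: systeme}) as $\f(z^q)=A(z)^{-1}\f(z)$ exhibits each $f_i(z^q)$ as a $K$-linear combination of the $f_j(z)$; as also $\sigma_q(z)=z^q\in K$, this gives $\sigma_q(L)\subseteq L$, and hence $\sigma_q(M)\subseteq M$ (an element of $M$ is algebraic over $K$, so its image is algebraic over $\sigma_q(K)=\Q(z^q)\subseteq K$). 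Moreover $[M:\sigma_q(M)]=q$, because $[M:\Q(z^q)]=[M:K][K:\Q(z^q)]=Dq$ while $[\sigma_q(M):\Q(z^q)]=[\sigma_q(M):\sigma_q(K)]=[M:K]=D$.

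Now I argue geometrically. Write $M$ as the function field of a smooth projective curve $C$ over $\Q$; the inclusion $K\hookrightarrow M$ gives a finite morphism $\pi\colon C\to\mathbb P^1$ of degree $D$, and the $\Q$-algebra embedding $\sigma_q\colon M\to M$ gives a finite morphism $\Psi\colon C\to C$ of degree $q$. Since $\sigma_q$ restricts on $K$ to $z\mapsto z^q$, one has $\pi\circ\Psi=\phi_q\circ\pi$ with $\phi_q\colon\mathbb P^1\to\mathbb P^1$, $x\mapsto x^q$. Put $\mathbb G_m:=\mathbb P^1\setminus\{0,\infty\}$ and $C^\circ:=\pi^{-1}(\mathbb G_m)$; from $\phi_q^{-1}(\mathbb G_m)=\mathbb G_m$ and the commutative square one gets $\Psi^{-1}(C^\circ)=C^\circ$, so $\Psi$ restricts to a finite surjective map $\Psi^\circ\colon C^\circ\to C^\circ$ of degree $q$. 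Comparing ramification divisors in $\pi\circ\Psi=\phi_q\circ\pi$ yields $\Psi^{*}R_\pi+R_\Psi=\pi^{*}R_{\phi_q}+R_\pi$; since $R_{\phi_q}$ is supported on $\{0,\infty\}$, restricting to $C^\circ$ kills $\pi^{*}R_{\phi_q}$ and leaves $R_\pi|_{C^\circ}=(\Psi^\circ)^{*}(R_\pi|_{C^\circ})+R_{\Psi^\circ}$. The divisor $E:=R_\pi|_{C^\circ}$ is effective and satisfies $(\Psi^\circ)^{*}E\leq E$, while $(\Psi^\circ)^{*}E$ is effective of degree $q\deg E$; hence $q\deg E\leq\deg E$, forcing $E=0$. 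Thus $R_\pi$ is supported on $\pi^{-1}(\{0,\infty\})$, i.e.\ $\pi$ is unramified over $\mathbb G_m$.

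From this $D=1$ follows. Riemann--Hurwitz gives $2g(C)-2=-2D+\deg R_\pi$, and $\deg R_\pi\leq(D-1)+(D-1)$ because $R_\pi$ lives over $\{0,\infty\}$; hence $g(C)=0$ with equality throughout, so $C\cong\mathbb P^1$ and $\pi$ is, after composing with automorphisms of the source and target, the map $t\mapsto t^{D}$. Equivalently $M\cong K[Y]/(Y^{D}-z)$, so $L$ contains an element $h$ with $h^{D}=z$. But every element of $L$ is meromorphic in a neighbourhood of $z=0$ (being a rational function of $z$ and of the $f_i$, which are holomorphic there), and comparing orders of vanishing at $0$ shows that no such function can satisfy $h^{D}=z$ when $D\geq2$. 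Therefore $D=1$, so $M=K$ and $L/K$ is regular.

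The heart of the argument, and the step I expect to require the most care, is the second paragraph: wringing from the bare existence of the functional equation the rigidity that $\pi$ is unramified away from $0$ and $\infty$. This rests on the $\sigma_q$-stability of $L$ (which endows $M$ with a degree-$q$ self-embedding lying over $x\mapsto x^q$ on the base) and on the ramification-divisor identity attached to the square $\pi\circ\Psi=\phi_q\circ\pi$. Everything else — the initial reduction, the finiteness of $M/K$, the passage to curves, and the closing Riemann--Hurwitz computation — is routine.
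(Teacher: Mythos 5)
Your argument is correct, but it follows a genuinely different route from the paper after the common opening moves. Both proofs start the same way: the relative algebraic closure $M$ of $\Q(z)$ in $L$ is finite (via the fact, cited in the paper from Lang, that intermediate fields of finitely generated extensions are finitely generated), and $L$ — hence $M$ — is stable under $\sigma_q$ because the system can be inverted, $\f(z^q)=A(z)^{-1}\f(z)$. The paper then concludes algebraically: for $f\in M$, the functions $f(z^{q^l})$ all lie in $M$, which has finite degree $d$ over $\Q(z)$, so $f(z),f(z^q),\ldots,f(z^{q^d})$ are linearly dependent over $\Q(z)$; thus $f$ is both algebraic and $q$-mahlérienne, and the classical theorem that an algebraic Mahler function is rational (Nishioka, Th. 5.1.7) gives $f\in\Q(z)$. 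You instead exploit the $\sigma_q$-stability geometrically: the induced degree-$q$ self-cover $\Psi$ of the curve $C$ with $\pi\circ\Psi=\phi_q\circ\pi$, the ramification-divisor identity restricted to $\pi^{-1}(\mathbb G_m)$, and Riemann--Hurwitz force $C\cong\mathbb P^1$ with $\pi$ a Kummer cover, so $L$ would contain a meromorphic $D$-th root of $z$ at the origin, whence $D=1$. Your computations check out ($[M:\sigma_q(M)]=q$, $\Psi^{-1}(C^\circ)=C^\circ$, the degree inequality killing $E$, the Riemann--Hurwitz count); the only point to phrase more carefully is the normalisation "after composing with automorphisms of the source and target": since the ramification values are exactly $0$ and $\infty$, only a source coordinate change plus a scaling is involved, i.e.\ $z=ct^D$ with $c\in\Q^\ast$, and one should note that $c^{1/D}\in\Q$ (the constants being algebraically closed) to get literally $h^D=z$ with $h\in M$. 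The trade-off: the paper's proof is a few lines but delegates the substance to the cited classical result, whereas yours is longer but self-contained modulo standard curve theory, and in effect reproves (the special case needed of) that classical result by exhibiting the dynamical rigidity of $x\mapsto x^q$ on $\mathbb G_m$ that underlies it.
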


\begin{proof}
Comme $L$ est finiment engendr\'e, toute sous-extension $\Q(z)\subset L'\subset L$ l'est \'egalement. Il s'agit d'un r\'esultat classique 
(voir par exemple Lang \cite[Exercise 4, Chap. VIII]{La}). Consid\'erons $L'$ la cl\^oture alg\'ebrique de $\Q(z)$ dans $L$. 
Comme $L'$ est alg\'ebrique et finiment engendr\'ee, on en d\'eduit que $L'$ est de degr\'e fini sur $\Q(z)$, disons $d$. 
Soit $f \in L'$. Comme $f$ est dans $L$, la d\'efinition du syst\`eme (\ref{eq: systeme}) 
implique que l'on a \'egalement $f(z^{q^l}) \in L$  pour tout $l\geq 0$. D'autre part,  les fonctions $f(z^{q^l})$ sont alg\'ebriques 
puisque $f(z)$ l'est.  
Ainsi, les fonctions $f(z^{q^l})$, $l\geq 0$, sont toutes dans $L'$ qui est de degr\'e $d$ sur $\Q(z)$ et il existe donc une relation lin\'eaire sur $\Q(z)$ entre 
les fonctions $f(z),f(z^q),\ldots,f(z^{q^d})$, ce qui revient \`a dire que $f(z)$ est $q$-mahl\'erienne. 
Comme $f(z)$ est \'egalement alg\'ebrique, un r\'esultat classique (voir par exemple Nishioka \cite[Theorem 5.1.7]{Ni_Liv}) 
implique que $f(z)\in \Q(z)$, comme souhait\'e. 
\end{proof}


\section{Version homog\`ene du th\'eor\`eme de Philippon}\label{sec: pphhom}

L'objet de cette section est de d\'emontrer le th\'eor\`eme  \ref{thm: pphHomogene}. 
Pour d\'emontrer ce r\'esultat, nous allons en fait commencer par prouver le corollaire \ref{coro: pphLineaire} de l'introduction 
dont l'\'enonc\'e est le suivant. 

\begin{theo}
\label{thm: pphLineaire2}
Soit $\alpha$, $0<\vert\alpha\vert<1$, un point alg\'ebrique r\'egulier pour le syst\`eme (\ref{eq: systeme}). On a : 
$$
{\rm Rel}_{\Q}(f_1(\alpha),\ldots,f_n(\alpha)) = \ev_\alpha \left({\rm Rel}_{\Q(z)}(f_1(z),\ldots,f_n(z)) \right) \, .
$$
\end{theo}

Nous utiliserons pour la d\'emonstration de ce r\'esultat le lemme d'al\`egbre lin\'eaire suivant, dont une d\'emonstration est donn\'ee dans \cite[Lemma 3.1]{Beu06}.

\begin{lem}\label{alglineaire}
Soient $f_1(z),\ldots,f_n(z)$ appartenant \`a $\Q[[z]]$ et notons $d$ la dimension de l'espace vectoriel  ${\rm Rel}_{\Q(z)}(f_1(z),\ldots,f_n(z))$. 
Alors, il existe des polyn\^omes $\lambda_{i,j}(z)\in \Q[z]$, $1\leq i\leq d, 1\leq j\leq n$, tels que les vecteurs $(\lambda_{i,1}(z),\ldots,\lambda_{i,n}(z))$, $1\leq i\leq d$ 
forment une base des $\Q[z]$-relations entre $f_1(z),\ldots,f_n(z)$, et tels que pour tout $\xi\in\Q$ le rang de la matrice 
$$
\left(\begin{array}{ccc} \lambda_{1,1}(\xi) & \cdots & \lambda_{1,n}(\xi)
\\ \vdots && \vdots
\\  \lambda_{d,1}(\xi) &    \cdots & \lambda_{d,n}(\xi)\end{array}\right)
$$
est \'egal \`a $d$. 
\end{lem}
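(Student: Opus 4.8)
The plan is to recognise the $\Q[z]$-module of polynomial relations as a \emph{direct summand} of $\Q[z]^n$, and then to read off the rank condition from the elementary fact that a basis of a direct summand has maximal minors generating the unit ideal. Concretely, first I would introduce the $\Q[z]$-linear map $\varphi\colon \Q[z]^n\to \Q[[z]]$ sending $(p_1(z),\ldots,p_n(z))$ to $\sum_{j=1}^n p_j(z)f_j(z)$, and set $M:=\ker\varphi$, which is exactly the module of $\Q[z]$-relations between $f_1(z),\ldots,f_n(z)$. Clearing denominators in an arbitrary element of ${\rm Rel}_{\Q(z)}(f_1(z),\ldots,f_n(z))$ shows that $M\otimes_{\Q[z]}\Q(z)={\rm Rel}_{\Q(z)}(f_1(z),\ldots,f_n(z))$, so $M$, being a submodule of a free module over the principal ideal domain $\Q[z]$, is free of rank $d$. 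Moreover $\Q[z]^n/M\cong{\rm im}\,\varphi$ is a submodule of the integral domain $\Q[[z]]$, hence torsion-free; being finitely generated over $\Q[z]$, it is therefore free, of rank $n-d$. Consequently the short exact sequence $0\to M\to\Q[z]^n\to\Q[z]^n/M\to 0$ splits, i.e. $M$ is a direct summand of $\Q[z]^n$.

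Next I would fix a $\Q[z]$-basis $(\lambda_{i,1}(z),\ldots,\lambda_{i,n}(z))$, $1\leq i\leq d$, of $M$; by construction these vectors form a basis of the $\Q[z]$-relations between the $f_j$, which is the first assertion of the lemma. Let $\Lambda(z)$ denote the $d\times n$ matrix $(\lambda_{i,j}(z))$. Since $M$ is a direct summand, choosing a basis of a complement $N$ with $\Q[z]^n=M\oplus N$ and placing its vectors below the rows of $\Lambda(z)$ yields an $n\times n$ matrix in ${\rm GL}_n(\Q[z])$, whose determinant is therefore a nonzero element of $\Q$. Expanding this determinant by the Laplace rule along its first $d$ rows writes it as a $\Q[z]$-linear combination of the $d\times d$ minors of $\Lambda(z)$; hence these minors generate the unit ideal of $\Q[z]$, so there is an identity $\sum_S g_S(z)\,m_S(z)=1$ with $g_S\in\Q[z]$ and $m_S$ the $d\times d$ minors. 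Evaluating this identity at any $\xi\in\Q$ shows that the numbers $m_S(\xi)$ cannot all vanish, i.e. ${\rm rank}\,\Lambda(\xi)=d$, which is the second assertion.

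The only step requiring any care is the passage from \og $\Q[z]^n/M$ is torsion-free\fg{} to \og $M$ is a direct summand of $\Q[z]^n$\fg; this rests entirely on $\Q[z]$ being a principal ideal domain (so that a finitely generated torsion-free module is free and the sequence splits) and on $\Q[[z]]$ being an integral domain. Everything else---clearing denominators, the rank count $\operatorname{rk}M=d$, and turning the direct-summand property into the non-vanishing of maximal minors via a Laplace/Bézout argument---is routine linear algebra over a principal ideal domain, and I do not anticipate any obstacle there.
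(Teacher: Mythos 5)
Your proof is correct, but it does not follow the route the paper relies on: the paper simply cites Beukers \cite[Lemma 3.1]{Beu06}, whose argument is a degree-descent. Beukers chooses a $\Q[z]$-basis of the relation module minimizing the (sum of the) degrees, and shows that if the evaluated matrix dropped rank at some $\xi$, a suitable $\Q$-linear combination of the rows would be divisible by $z-\xi$; dividing and substituting back into the basis would lower the degree, a contradiction. You instead argue structurally: the quotient $\Q[z]^n/M\cong\operatorname{im}\varphi$ embeds in the domain $\Q[[z]]$, hence is torsion-free, hence free over the PID $\Q[z]$, so the relation module $M$ is a direct summand of $\Q[z]^n$; completing a basis of $M$ to a basis of $\Q[z]^n$ and expanding the (unit) determinant by Laplace along the first $d$ rows shows the $d\times d$ minors of $\Lambda(z)$ generate the unit ideal, so the rank never drops. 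All the intermediate claims (clearing denominators to get $\operatorname{rk}M=d$, freeness of finitely generated torsion-free modules over a PID, the splitting, the Laplace/B\'ezout identity) are sound. Your approach buys something slightly stronger and arguably cleaner: it shows that \emph{every} $\Q[z]$-basis of the relation module has the everywhere-full-rank property, whereas Beukers' descent only produces one particular basis with that property; the price is the small amount of module theory (splitting of the exact sequence) that Beukers' elementary divisibility argument avoids. Either version suffices for the use made of the lemma in the proof of the linear case of the theorem.
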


\begin{proof}[D\'emonstration du th\'eor\`eme \ref{thm: pphLineaire2}]
Nous prouverons tout d'abord le r\'esultat dans le cas o\`u les fonctions sont lin\'eairement ind\'ependantes. 
Nous montrerons ensuite comment l'on peut, dans le cas g\'en\'eral, se ramener \`a cette premi\`ere situation.

\medskip

Supposons dans un premier temps que $\dim {\rm Rel}_{\Q(z)}(f_1(z),\ldots,f_n(z)) = 0$, c'est-\`a-dire que les fonctions $f_1(z),\ldots,f_n(z)$ 
sont lin\'eairement ind\'ependantes sur $\Q(z)$.  On va maintenant raisonner par l'absurde en supposant  qu'il existe un $n$-uplet de nombres alg\'ebriques 
$\lambda_1,\ldots,\lambda_n$, non tous nuls, 
tel que
\begin{equation}
\label{dependancevaleurs}
\sum \lambda_i f_i(\alpha) = 0\, ,
\end{equation}
o\`u $\alpha$, $0<\vert\alpha\vert<1$,  d\'esigne un point alg\'ebrique r\'egulier pour le syst\`eme mahl\'erien associ\'e aux fonctions $f_i$. 
D'apr\`es le th\'eor\`eme \ref{thm: pph}, il existe des polyn\^omes $p_1(z),\ldots,p_n(z), r(z) \in \Q[z]$, premiers entre eux, tels que 
\begin{equation}
\label{dependanceaffine}
\sum_{i=1}^n p_i(z) f_i(z) = r(z),
\end{equation}
$p_i(\alpha)=\lambda_i$, $1\leq i\leq n$, et $r(\alpha) = 0$. Sans perte de g\'en\'eralit\'e, on peut supposer que $r(\alpha^q) \neq 0$. 
En effet, si ce n'est pas le cas, il est toujours possible de consid\'erer 
le plus petit entier $l$ tel que $r(\alpha^{q^{l+1}}) \neq 0$. L'\'equation de d\'ependance affine \eqref{dependanceaffine} induit alors une relation de d\'ependance 
lin\'eaire non triviale
$$ 
\sum_{i=1}^n p_i(\alpha^{q^l})f_i(\alpha^{q^l}) = 0 
$$
et on applique le raisonnement qui suit \`a $\alpha^{q^l}$.

 On va construire \`a pr\'esent un nouveau syst\`eme en rempla\c cant l'une des fonctions $f_i$ par $r$.
Il existe un indice $i$ pour lequel $p_i(\alpha^q)$ est non nul. En effet, si $p_i(\alpha^q) = 0$ pour chaque $i$, comme les fonctions $f_i$ sont toutes 
d\'efinies en $\alpha^q$ car $\alpha$ est un point r\'egulier, on obtiendrait que $r(\alpha^q) = 0$, ce qui serait contraire \`a notre hypoth\`ese. 
Quitte \`a permuter les indices, on peut supposer que $p_n(\alpha^q)\neq 0$. On consid\`ere alors la matrice suivante : 
$$ 
S(z) := \left(\begin{array}{cccc} 1 & &  &  \\ & \ddots & & \\ & & 1 & \\ p_1(z) & p_2(z) & \cdots & p_n(z) \end{array} \right)
$$ 
de sorte que
\begin{equation}
S(z).\left(\begin{array}{c} f_1(z) \\ \vdots \\ f_n(z) \end{array} \right) = \left(\begin{array}{c} f_1(z) \\ \vdots \\ f_{n-1}(z) \\ r(z) \end{array} \right).
\end{equation}
Notons que la matrice $S(z)$ n'a pas de p\^ole et a pour d\'eterminant le polyn\^ome $p_n(z)$. 
Par construction, le vecteur $(f_1(z),\ldots,f_{n-1}(z),r(z))^T$ est solution du syst\`eme associ\'e \`a la matrice 
$$
B(z) := S(z) A(z) S(z^q)^{-1} \,.
$$
Notons que  $B$ est bien d\'efinie en $\alpha$. On a par ailleurs, l'\'egalit\'e suivante pour les d\'eterminants : 
\begin{equation}
\label{egalitedet}
{\rm det}(B(z)) := {\rm det}(S(z)){\rm det}(A(z)){\rm det}(S(z^q))^{-1} = {\rm det}(A(z))\frac{p_n(z)}{p_n(z^q)} \, \cdot
\end{equation}
Comme par hypoth\`ese,  les fonctions $f_1(z),\ldots,f_n(z)$ sont lin\'eairement ind\'ependantes sur $\Q(z)$ et que la matrice $B(z)$ est inversible, 
on obtient que les fonctions $f_1(z^q),\ldots,f_{n-1}(z^q)$ et $r(z^q)$ sont \'egalement lin\'eairement ind\'ependantes sur $\Q(z)$. 
Ainsi, la seule relation lin\'eaire sur $\Q(z)$ liant $r(z)$ et  $f_1(z^q),\ldots,f_{n-1}(z^q), r(z^q)$ est, \`a multiplication par une constante pr\`es, 
la relation banale : 
$$
r(z) = \frac{r(z)}{r(z^q)} r(z^q) \, \cdot
$$
On en d\'eduit que la  $n$-i\`eme ligne de la matrice $B(z)$ est,  \`a multiplication par une constante pr\`es, \'egale \`a : 
$$
\left(0,\ldots,0, \frac{r(z)}{r(z^q)} \right) \, .
$$
Puisque $r(\alpha) = 0$ et, par hypoth\`ese, $r(\alpha^q) \neq 0$, on obtient que 
\begin{equation*}
\label{noyau}
(0,\ldots,0,1)B(\alpha) = 0 \, .
\end{equation*}
Cependant, on a 
\begin{align*}
(0,\ldots,0,1)B(\alpha)& = (0,\ldots,0,1)S(\alpha)A(\alpha)S(\alpha^q)^{-1} \\
         & = (p_1(\alpha),\ldots,p_n(\alpha))A(\alpha)S(\alpha^q)^{-1} \\    
         &  \neq 0
\end{align*}          
car la matrice $A(\alpha)S(\alpha^q)^{-1}$ est inversible et que les $p_i(\alpha)$ ne sont pas tous nuls. 
On obtient donc une contradiction, ce qui prouve le th\'eor\`eme dans le cas o\`u $\dim {\rm Rel}_{\Q(z)}(f_1(z),\ldots,f_n(z)) = 0$. 

\medskip

Supposons \`a pr\'esent que $\dim {\rm Rel}_{\Q(z)}(f_1(z),\ldots,f_n(z)) = d \geq 1$.  
D'apr\`es le lemme \ref{alglineaire}, on peut choisir une famille de vecteurs 
$$
\lambd_i(z):=(\lambda_{i,1}(z),\ldots,\lambda_{i,n}(z)), \;\; 1\leq i\leq d,  
$$ 
dont les coordonn\'ees sont dans $\Q[z]$ et tels que le rang de la matrice 
$$
M(\xi):=\left(\begin{array}{ccc} \lambda_{1,1}(\xi) & \cdots & \lambda_{1,n}(\xi)
\\ \vdots && \vdots
\\  \lambda_{d,1}(\xi) &    \cdots & \lambda_{d,n}(\xi)\end{array}\right)
$$ 
est \'egal \`a $d$ pour tout $\xi$ dans $\Q$.
Quitte \`a renum\'eroter les fonctions $f_i$, on peut donc supposer que le mineur principal de la matrice 
$$
\left(\begin{array}{ccc} \lambda_{1,1}(\alpha) & \cdots & \lambda_{1,n}(\alpha)
\\ \vdots && \vdots
\\  \lambda_{d,1}(\alpha) &    \cdots & \lambda_{d,n}(\alpha)\end{array}\right)
$$
est inversible. On consid\`ere alors la matrice suivante 
$$
S(z) := \left(\begin{array}{cccccc} \lambda_{1,1}(z) & \cdots & \cdots & \cdots & \cdots & \lambda_{1,n}(z)
\\ \vdots &&&&& \vdots
\\  \lambda_{d,1}(z) & \cdots & \cdots & \cdots &    \cdots & \lambda_{d,n}(z)
\\ 0 &\cdots& 0& 1 & 0  & \cdots
\\ \vdots & & & \ddots & \ddots  & \vdots
\\ 0 & \cdots & \cdots & \cdots & 0 & 1 \end{array}\right)
$$
Cette matrice est donc d\'efinie et inversible en $z=\alpha$. D'autre part, on a 
\begin{equation}\label{eq: S(z)}
S(z) \left(\begin{array}{c} f_1(z) \\ \vdots \\ f_n(z) \end{array} \right) = \left(\begin{array}{c} 0 \\ \vdots \\ 0 \\ f_{d+1}(z) \\ \vdots \\ f_n(z) \end{array} \right)\, .
\end{equation}
Fixons un entier $l_0$ tel que le d\'eterminant de $S(z)$ ne s'annule en aucun des points $\alpha^{q^l}$, pour $l\geq l_0$, et notons $q_0=q^{l_0}$. 
Consid\'erons enfin la matrice 
$$
B(z) = S(z)A(z)S(z^{q_0})^{-1} \,.
$$
On a l'\'equation malh\'erienne suivante
$$
\left(\begin{array}{c} 0 \\ \vdots \\ 0 \\ f_{d+1}(z) \\ \vdots \\ f_n(z) \end{array} \right) = B(z) \left(\begin{array}{c} 0 
\\ \vdots \\ 0 \\ f_{d+1}(z^{q_0}) \\ \vdots \\ f_n(z^{q_0}) \end{array} \right)
$$ 
pour laquelle le point $\alpha$ est un point r\'egulier. D'autre part, l'ind\'ependance lin\'eaire des fonctions $f_{d+1}(z),\ldots,f_n(z)$ nous garantit que la matrice 
$B(z)$ est triangulaire inf\'erieure, de la forme suivante
$$B(z) = \left(\begin{array}{ccc|ccccc} & & & & & & & \\ & D(z) & & & & 0 & & \\ & & & & & & & 
\\ \hline & & & & & & & \\ & & & & & & & \\ 
& E(z) & & & & C(z) & & \\& & & & & & & \\ & & & & & & & \end{array}\right)
$$ o\`u $C(z)$ est une matrice carr\'ee de taille $n-d$. 
On consid\`ere alors le sous-syst\`eme 
$$
\left(\begin{array}{c} f_{d+1}(z) \\ \vdots \\ f_n(z) \end{array} \right) = C(z) \left(\begin{array}{c} f_{d+1}(z^{q_0}) \\ \vdots \\ f_n(z^{q_0}) \end{array} \right)\,.
$$ 
Le point $\alpha$ est encore r\'egulier pour ce syst\`eme. En effet, par construction, pour tout entiel $l\geq 1$, $\alpha^{q_0^l}$ n'est p\^ole d'aucun des coefficients de $B(z)$ et 
donc \textit{a fortiori} d'aucun des coefficients de $C(z)$. 
D'autre part 
$$
\det B(z) = \det C(z)\det D(z)\,,
$$
et $\alpha^{q_0^l}$ n'\'etant ni un z\'ero de $\det B(z)$, ni un p\^ole de $\det D(z)$, $\alpha^{q_0^l}$ 
n'est pas un z\'ero de $\det C(z)$.  
Consid\'erons maintenant un vecteur 
$$ 
\lambd:= (\lambda_1,\ldots,\lambda_n) \in {\rm Rel}_{\Q}(f_1(\alpha),\ldots,f_n(\alpha))\, .
$$ 
La matrice $S(z)$ \'etant inversible en $z=\alpha$, on peut consid\'erer le vecteur 
$\boldsymbol{\mu} :=  \lambd S(\alpha)^{-1}$. D'apr\`es \eqref{eq: S(z)}, on obtient que $\boldsymbol\mu$   
appartient \`a l'ensemble 
$$
{\rm Rel}_{\Q}(0,\ldots,0,f_{d+1}(\alpha),\ldots,f_n(\alpha))\, .
$$
Notons $\boldsymbol\mu := (\mu_1,\ldots,\mu_n)$,  
de sorte que 
$$
(\mu_{d+1},\ldots,\mu_n) \in {\rm Rel}_{\Q}(f_{d+1}(\alpha),\ldots,f_n(\alpha))\, .
$$
Par hypoth\`ese, le syst\`eme 
$$
\left(\begin{array}{c} f_{d+1}(z) \\ \vdots \\ f_n(z) \end{array} \right) = C(z) \left(\begin{array}{c} f_{d+1}(z^{q_0}) \\ \vdots \\ f_n(z^{q_0}) \end{array} \right)
$$ 
est form\'e de fonctions lin\'eairement ind\'ependantes et admet $\alpha$ comme point r\'egulier.
La premi\'ere partie de la preuve montre donc que 
$$
\mu_{d+1} = \cdots = \mu_n = 0\,.
$$
En posant 
$$
\tilde{S}(z) := 
S(z) - \left(\begin{array}{ccc|ccc} &&&&&\\ & 0_{d\times d}& & & 0_{d\times (n-d)} & \\ &&&&&\\ \hline &&&&&\\ &0_{(n-d)\times d} 
&&& {\rm I}_{n-d}& \\ &&&&&\end{array}\right)\, ,
$$
on obtient alors
\begin{equation}\label{eq: lsa}
\lambd =  \boldsymbol\mu S(\alpha) = \boldsymbol\mu\tilde{S}(\alpha)\, .
\end{equation}
Par construction, chaque ligne de la matrice $\tilde{S}(z)$ appartient \`a l'espace vectoriel 
$ {\rm Rel}_{\Q(z)}(f_1(z),\ldots,f_n(z))$   
et donc le vecteur $ \boldsymbol\mu \tilde{S}(z)$ appartient \`a ${\rm Rel}_{\Q(z)}(f_1(z),\ldots,f_n(z))$.
On d\'eduit donc de \eqref{eq: lsa} que 
$$
\lambd  \in  \ev_\alpha \left({\rm Rel}_{\Q(z)}(f_1(z),\ldots,f_n(z)) \right) \, ,
$$
ce qui ach\`eve cette d\'emonstration. 
\end{proof}

Nous sommes \`a pr\'esent en mesure de prouver le th\'eor\`eme \ref{thm: pphHomogene}.

\begin{proof}[Preuve du th\'eor\`eme \ref{thm: pphHomogene}]
Soient $\alpha$, $0<\vert\alpha\vert<1$,  un point alg\'ebrique r\'egulier pour le syst\`eme \eqref{eq: systeme} 
et $P \in\Q[X_1,\ldots,X_n]$ un polyn\^ome homog\`ene de degr\'e $d\geq 1$ en 
les variables $X_1,\ldots,X_n$ tel que 
$$
P(f_1(\alpha),\ldots,f_n(\alpha))=0 \,.
$$
Notons $M_1(X_1,\ldots,X_n),\ldots,M_N(X_1,\ldots,X_n)$ une \'enum\'eration des mon\^{o}mes de degr\'e $d$ en les variables $X_1,\ldots,X_n$. On consid\`ere alors 
les fonctions $g_1(z):=M_1(f_1(z),\ldots,f_n(z)),\ldots, g_N(z):=M_N(f_1(z),\ldots,f_n(z))$.  
Celles-ci sont solutions du syst\`eme  mahl\'erien 
\begin{equation*}
\left(\begin{array}{c} g_1(z) \\ \vdots \\ g_N(z) \end{array} \right)  = B(z) \left(\begin{array}{c} g_1(z^k) \\ \vdots \\ g_N(z^k) \end{array} \right) \,,
\end{equation*}
o\`u $B(z)$ est une sous-matrice de $A(z)^{\oplus d}$, puissance $d$-i\`eme de Hadamard de la matrice $A(z)$. 
En particulier les racines du d\'eterminant de $B(z)$ sont celles de celui de $A(z)$ et les p\^oles des coefficients de $B(z)$ sont ceux des coefficients de $A(z)$. 
Le point $\alpha$ est donc un point r\'egulier pour ce syst\`eme. En appliquant le th\'eor\`eme \ref{thm: pphLineaire2} \`a ce  syst\`eme, on obtient l'existence 
de polyn\^{o}mes $v_1(z),\ldots,v_N(z)$ tels que
$$
\sum_{i=1}^N v_i(z)g_i(z) = 0 \qquad \text{ et }  P(X_1,\ldots,X_n)   = \sum_{i=1}^N v_i(\alpha)M_i(X_1,\ldots,X_n)\, .
$$
On pose alors $Q(z,X_1,\ldots,X_n):=\sum_{i=1}^n v_i(z)M_i(X_1,\ldots,X_n)$, ce qui termine la d\'emonstration.
\end{proof}


\section{Premi\`ere d\'emonstration du point (i) du th\'eor\`eme \ref{thm: baker}}\label{sec: pointi}

Nous allons montrer comment obtenir le point (i) du th\'eor\`eme \ref{thm: baker} \`a partir du th\'eor\`eme \ref{thm: pph} de Philippon.  
Pour cela, nous aurons besoin de  trois r\'esultats auxiliaires.

Le lemme suivant est une adaptation directe d'une construction utilis\'ee par Bell, Bugeaud et Coons dans \cite{BBC}.  
Elle permet, \'etant donn\'e un nombre alg\'ebrique $\alpha$ non nul, de plonger un syst\`eme mahl\'erien dont les solutions sont d\'efinies au point $\alpha$ dans 
un m\'eta-syst\`eme pour lequel les points $\alpha^{q^l}$, $l\geq 0$,  ne sont jamais des p\^oles des coefficients de la matrice associ\'ee.  
Notons que cette astuce s'av\`ere inutile dans le cas des s\'eries automatiques (ou m\^eme r\'eguli\`eres) puisque l'on peut alors se ramener 
\`a un syst\`eme mahl\'erien dont la matrice est \`a coefficients polyn\^omes.

\medskip

\begin{lem}\label{lem: bbc} 
Consid\'erons un syst\`eme du type (\ref{eq: systeme})  et  $\alpha\in\Q$, $0<\vert \alpha\vert<1$ qui ne soit p\^ole d'aucune des fonctions 
$f_1(z),\ldots,f_n(z)$.   Soit ${\bf k}$ un corps de nombres contenant les coefficients des $f_i(z)$ et $\alpha$. 
Alors, il existe un syst\`eme mahl\'erien d'ordre $m\geq n$, ayant pour solution des fonctions $g_1(z),\ldots,g_m(z)$ et tel que :
\begin{itemize}

\smallskip

\item[{\rm(i)}]  la matrice $B(z)$ associ\'ee \`a ce nouveau syst\`eme n'a de p\^ole en aucun des points $\alpha^{q^l}$, pour tout entier $l$, 

\smallskip

\item[{\rm(ii)}]  $g_i(\alpha)=\lambda_if_i(\alpha)$, o\`u $\lambda_i\in {\bf k}\setminus\{0\}$, pour tout $i\in\{1,\ldots,n\}$\, .

\end{itemize} 
\end{lem}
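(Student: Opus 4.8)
The plan is to follow the construction of Bell, Bugeaud and Coons \cite{BBC} almost verbatim, adapting it to the functional setting. The key point is that the only obstruction to $\alpha^{q^l}$ being a pole of $A(z)$ for some $l$ comes from finitely many ``bad'' values among the $\alpha^{q^l}$, because the set of poles of $A(z)$ (and of $A(z)^{-1}$) has no accumulation point in the open unit disc and $\alpha^{q^l}\to 0$. So first I would fix a denominator: let $p(z)\in{\bf k}[z]$ be a polynomial whose zero set contains all the poles of the coefficients of $A(z)$ lying in a closed disc $\overline{D(0,r)}$ with $|\alpha|<r<1$, chosen so that $A(z)$ has no pole in $D(0,r)^\star$ outside this finite list. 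Since $\alpha^{q^l}\to 0$, there are only finitely many integers $l$, say $l=0,1,\dots,L-1$, for which $\alpha^{q^l}$ could possibly be a pole of $A(z)$, $A(z^q)$, etc.; equivalently, we must ``clear denominators'' along the finite orbit segment $\alpha,\alpha^q,\dots,\alpha^{q^{L}}$.

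Second, I would enlarge the system. Following \cite{BBC}, for each index $i\in\{1,\dots,n\}$ and each ``bad'' level one introduces auxiliary functions of the form $f_i(z)$ multiplied by a suitable product of factors $(z^{q^j}-\beta)$ over the relevant $\beta$ and $j$, chosen exactly so that the new unknowns satisfy a Mahler system whose matrix $B(z)$ has the required denominators already cancelled. Concretely: if one sets $g_i(z)=c_i(z)f_i(z)$ where $c_i(z)\in{\bf k}[z]$ is built from the factors needed to kill the poles of $A$ encountered when iterating from $z$, then $c_i(z)/c_i(z^q)$ is, up to the polynomial part that moves to the next level, of the right shape; the remaining factors involving the deeper iterates $z^{q^j}$ are absorbed by adjoining yet more coordinates, one for each ``shifted'' version. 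This produces a system of order $m\geq n$ with matrix $B(z)\in{\rm GL}_m({\bf k}(z))$. The bookkeeping is arranged so that $c_i$ has no zero at $\alpha$ — one simply never includes a factor vanishing at $\alpha$ itself, which is allowed because $\alpha$ is not a pole of any $f_i$, hence not forced into any denominator — and so $g_i(\alpha)=c_i(\alpha)f_i(\alpha)=\lambda_if_i(\alpha)$ with $\lambda_i=c_i(\alpha)\in{\bf k}\setminus\{0\}$. The first $n$ coordinates of the new solution vector are the $g_i$, giving (ii).

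Third, I would check property (i) directly. By construction the only poles of $B(z)$ are poles of $A(z)$ that survive the cancellation, and these lie at values $\beta$ with $|\beta|\geq r$; moreover the cancellation was designed precisely around the orbit of $\alpha$, so no $\alpha^{q^l}$ with $l\geq L$ is a pole (those iterates lie in $D(0,r)^\star$ where $A$, and hence the relevant block of $B$, is already holomorphic after clearing the listed finite set of poles), while for $l<L$ the factors $c_i$ and the auxiliary coordinates were introduced exactly to remove the pole at $\alpha^{q^l}$. Since there are only finitely many poles of $A(z)^{-1}$ in any closed subdisc as well, the same argument clears them, so $B(z)$ and $B(z)^{-1}$ have no pole at any $\alpha^{q^l}$.

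The main obstacle, and the only place requiring care, is the combinatorics of the denominators under iteration: when we substitute $z\mapsto z^q$, a factor $(z^{q^j}-\beta)$ becomes $(z^{q^{j+1}}-\beta)$, so to get a closed finite-dimensional system one must track how far ``down the orbit'' a pole of $A$ can be reached and include a coordinate for every shifted product that occurs; the finiteness of this set is what makes $m<\infty$, and it is guaranteed by $\alpha^{q^l}\to0$ together with the discreteness of the pole set in the unit disc. I would therefore organize the proof by first defining, for each pole $\beta$ of $A$ with $|\beta|<1$ and each $i$, the least $l$ with $\alpha^{q^l}=\beta$ (empty for all but finitely many pairs), then listing the finitely many factors thereby needed, then writing down $B(z)=C(z)A(z)C(z^q)^{-1}$ on the enlarged space where $C(z)$ is the (block) multiplier matrix, and finally verifying that $B(z)\in{\rm GL}_m({\bf k}(z))$ has no pole on the orbit of $\alpha$ — all of which is the argument of \cite{BBC} transposed to functions, so the remark after the statement (that this is superfluous for regular or automatic series, where one can take $A$ polynomial outright) applies.
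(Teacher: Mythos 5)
Your construction has a genuine gap at its central step. You multiply each $f_i$ by a polynomial $c_i(z)$ and claim the bookkeeping can be arranged so that $c_i(\alpha)\neq 0$, \emph{\og because $\alpha$ is not a pole of any $f_i$, hence not forced into any denominator\fg}. This confuses poles of the functions with poles of the matrix: the hypothesis of the lemma is only that $\alpha$ is not a pole of the $f_i$, and it is perfectly possible for $A(z)$ to have a pole at $\alpha$ itself (or at some $\alpha^{q^l}$) while every $f_i$ stays finite there, the cancellation occurring because the vector $(f_1(\alpha^q),\dots,f_n(\alpha^q))$ kills the polar part. With your diagonal multiplier the entry $B_{ij}(z)=c_i(z)A_{ij}(z)/c_j(z^q)$ can lose a pole of $A_{ij}$ at $\alpha$ only if $c_i(\alpha)=0$, which destroys property (ii) since $\lambda_i=c_i(\alpha)$. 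This is exactly the nontrivial case of the lemma (the paper notes that if no $\alpha^{q^l}$ is a pole of $A$ there is nothing to prove), so your argument covers only the trivial case. A secondary weakness is that the \og closing up\fg{} of the shifted factors $(z^{q^j}-\beta)\mapsto(z^{q^{j+1}}-\beta)$ into a finite system is asserted rather than proved.

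The paper's proof resolves the first point with an idea absent from your proposal. Writing $b(z)=\gamma z^r\beta(z)$ for the lcm of the denominators of $A$, with $\beta(0)=1$, one multiplies the $f_i$ by the single infinite product $u(z)=\prod_{j\ge0}\beta(z^{q^j})$, which satisfies $u(z)=\beta(z)u(z^q)$ and hence clears all polynomial denominators at once (the leftover monomial $z^r$ is harmless since $\alpha\neq0$); this also disposes of your bookkeeping problem. The price is that $u$ may vanish at $\alpha$, say to order $s$; one then recovers $f_i(\alpha)$, up to a nonzero constant of ${\bf k}$, from the $s$-th derivative of $h_i=uf_i$ at $\alpha$, and the functions $h_i^{(j)}$, $0\le j\le s$, divided by the tail of the product (which does not vanish along the orbit of $\alpha$), satisfy an enlarged Mahler system of order $n(s+1)$ whose matrix has denominator $\gamma z^r\beta(z^{q^{n_0}})$, nonzero at every $\alpha^{q^l}$. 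Without some analogue of this differentiation step your multiplier is forced to vanish at $\alpha$ whenever $A$ has a pole there. Note finally that the lemma only requires $B(z)$, not $B(z)^{-1}$, to be pole-free along the orbit; regularity is restored afterwards by Lemma \ref{lem: dedoublement}, so your effort to clear the poles of $A(z)^{-1}$ is not needed here.
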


\medskip

\begin{proof}
Notons $A_0(z)$ la matrice associ\'ee \`a notre syst\`eme.   
Sans perte de g\'en\'eralit\'e, nous pouvons supposer qu'il existe un entier $l$ tel que la matrice $A_0(z)$ ne soit pas d\'efinie au point $\alpha^{q^l}$. 
En effet, si ce n'est pas le cas, le syst\`eme de d\'epart jouit d\'ej\`a des propri\'et\'es requises. 
Notons $b(z)$ le polyn\^ome obtenu comme ppcm des d\'enominateurs des coefficients de la matrice $A_0(z)$.   On a alors $A_0(z) = A(z)/b(z)$ pour une matrice $A(z)$ 
dont les coefficients sont des \'el\'ements de ${\bf k}[z]$. 
Notons qu'il existe un entier $n_0$ tel que 
\begin{equation}\label{eq: bki}
b(\alpha^{q^l}) \not = 0 \,
\end{equation}
pour tout $l\geq n_0$. 
D'autre part, on peut \'ecrire $b(z)=\gamma z^r\beta(z)$ o\`u $r$ est un entier, $\gamma$ est un \'el\'ement de 
${\bf k}$ et $\beta(z)$ est un polyn\^ome \`a coefficients dans $\bf k$ tel que $\beta(0)=1$. 
Pour tout $i=1,\ldots,n$, on pose :
$$
h_i(z) := f_i(z) \prod_{j\geq 0} \beta(z^{q^j})\, .
$$
Les fonctions $h_1(z),\ldots,h_n(z)$ sont ainsi solutions du syst\`eme mahl\'erien associ\'e \`a la matrice $A(z)/\gamma z^r$. 
En d\'erivant, on obtient un nouveau syst\`eme de la forme
\begin{equation}\label{eq: der}
\left( \begin{array}{ c }
     h_1(z) \\
     \vdots \\
     h_n(z)\\
     h'_1(z)\\
      \vdots \\
     h'_n(z)\\
  \end{array} \right) = \frac{1}{\gamma z^r}  \left(\begin{array}{cc} 
A(z) & 0 \\

\star & qz^{q-1}A(z)
\end{array}\right)   \left( \begin{array}{ c }
      h_1(z^q) \\
     \vdots \\
     h_n(z^q)\\
     h'_1(z^{q})\\
      \vdots \\
     h'_n(z^{q})\\
  \end{array} \right) \, ,
\end{equation}
o\`u $\star$ d\'esigne une matrice $n\times n$ \`a coefficients dans ${\bf k}[z]$. En it\'erant ce proc\'ed\'e $t$ fois, 
on obtient un syst\`eme de la forme :
\begin{equation}\label{eq: iter}
\left( \begin{array}{ c }
     h_1(z) \\
     \vdots \\
     h_n(z)\\
     h'_1(z)\\
      \vdots \\
     h'_n(z)\\
     \vdots \\
     h_1^{(t)}(z)\\
      \vdots \\
     h_n^{(t)}(z)\\
  \end{array} \right) = \frac{1}{\gamma z^r}   \left( \begin{array}{cccc} 
  A(z)  && &\\ 
\star & qz^{q-1} A(z) &&\\
 \vdots & \ddots& \ddots &\\
 \star & \ldots & \star & q^t z^{t(q-1)} A(z)
\end{array}\right)  \left( \begin{array}{ c }
      h_1(z^q) \\
     \vdots \\
     h_n(z^q)\\
     h'_1(z^{q})\\
      \vdots \\
     h'_n(z^{q})\\
     \vdots \\
     h_1^{(t)}(z^q)\\
      \vdots \\
     h_n^{(t)}(z^{q})\\
  \end{array} \right) \, .
\end{equation}
Notons $s$ l'ordre de la racine $\alpha$ dans le polyn\^ome $\prod_{j=0}^{n_0-1} \beta(z^{q^j})$ et 
$T(z)$ le polyn\^ome \`a coefficients dans $\bf k$ d\'efini par l'\'egalit\'e 
$$
\prod_{j=0}^{n_0-1} \beta(z^{k^j}) = (z-\alpha)^s T(z) \, .
$$
Notons que $T(\alpha)\not=0$. 
Un calcul montre alors que, pour tout $i=1,\ldots,n$, on a 
\begin{equation}\label{eq: lem5}
f_i(\alpha) s! T(\alpha) = \frac{h_i^{(s)}(\alpha)}{\prod_{j\geq n_0} \beta(\alpha^{q^j})} \, \cdot
\end{equation}
D'autre part, on peut v\'erifier que les fonctions
 $$
 h_{i,j}(z):= \frac{h_i^{(j)}(z)}{\prod_{l\geq n_0} \beta(z^{q^l})}, \; i=1,\ldots,n, j=0,\ldots, s,
 $$
 satisfont \`a un syst\`eme mahl\'erien d'ordre $m:=n(s+1)$ associ\'e \`a la matrice
 $$
B(z):=\frac{1}{\gamma z^r \beta(z^{q^{n_0}})}   \left( \begin{array}{cccc} 
  A(z)  && &\\ 
\star & qz^{q-1} A(z) &&\\
 \vdots & \ddots& \ddots &\\
 \star & \ldots & \star & q^s z^{s(q-1)} A(z)
\end{array}\right) \, \cdot
 $$
 Au vu de (\ref{eq: bki}) et (\ref{eq: lem5}), et puisque les $\star$ d\'esignent des sous-matrices \`a coefficients dans ${\bf k}[z]$, 
 ce syst\`eme a toutes les propri\'et\'es requises en posant $g_{al+i}(z):= h_{i,s-a}(z)$ pour $a=0,\ldots,s$ et $i=1,\ldots,n$. 
 \end{proof}

\medskip

Le lemme suivant est la cons\'equence d'une autre construction permettant de plonger un syst\`eme mahl\'erien pour lequel 
un point $\alpha$ est singulier (mais tel que $\alpha^{q^l}$ n'est jamais un p\^ole de la matrice associ\'ee) 
dans un m\'eta-syst\`eme pour lequel $\alpha$ est un point r\'egulier. 

\medskip

\begin{lem}\label{lem: dedoublement} 
Consid\'erons un syst\`eme du type (\ref{eq: systeme})  et  $\alpha\in\Q$, $0<\vert \alpha\vert<1$ qui ne soit p\^ole d'aucune des fonctions 
$f_1(z),\ldots,f_n(z)$.   
Supposons en outre que, pour tout entier $l$, $\alpha^{q^l}$ ne soit pas un p\^ole de la matrice $A(z)$.  
Alors, il existe un syst\`eme mahl\'erien d'ordre $m\geq n$, admettant des solutions $h_1(z),\ldots,h_m(z)$, et tel que :  
\begin{itemize}

\smallskip

\item[{\rm(i)}]  $\alpha$ est un point r\'egulier pour ce syst\`eme, 

\smallskip

\item[{\rm(ii)}]  $h_i(z)=f_i(z)$ pour $i=1\ldots,n$. 

\end{itemize} 
\end{lem}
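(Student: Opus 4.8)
The plan is to realize the "doubling" construction alluded to in the statement: since $\alpha$ is not a pole of any $f_i$ nor of $A(z^{q^l})$ for any $l\ge 0$, but may still be a singularity because $\alpha$ (or some $\alpha^{q^l}$) is a zero of $\det A(z)$, the obstruction to regularity comes only from $A(z)^{-1}$. To cure this I would adjoin to the vector $(f_1,\dots,f_n)$ a second copy of the system, read "one step backwards", so that in the enlarged system the relevant inverse matrices never need to be evaluated at a zero of a determinant. Concretely, set $m=2n$ and consider the vector $(f_1(z),\dots,f_n(z),f_1(z^q),\dots,f_n(z^q))^T$. The top block satisfies the original equation with matrix $A(z)$, and the bottom block satisfies $f_i(z^q)=\sum_j A(z^q)_{ij} f_j(z^{q^2})$, i.e. it is governed by $A(z^q)$. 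Thus the enlarged vector is solution of the system with block-diagonal matrix
$$
\widetilde A(z) := \begin{pmatrix} A(z) & 0 \\ 0 & A(z^q)\end{pmatrix}.
$$
This already gives (ii) with $h_i=f_i$ for $i\le n$. One then has to check (i): $\widetilde A(z)$ has poles only at poles of $A(z)$ or of $A(z^q)$, hence at points of the form $\beta$ or $\beta^{1/q}$ with $\beta$ a pole of $A$; and $\det\widetilde A(z)=\det A(z)\det A(z^q)$ vanishes at zeros of $\det A$ and at their $q$-th roots. The issue is that iterating, $\widetilde A(z)\widetilde A(z^q)\cdots$ still involves $A(z^{q^l})$ for all $l$, so the naive block-diagonal doubling does not by itself make $\alpha$ regular — one must be cleverer.

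The correct construction, which I expect to be the substance of the argument, is to conjugate rather than merely stack. Let $\ell_0$ be large enough that $\alpha^{q^{\ell}}$ lies in the punctured disc $D(0,\rho)^\star$ for all $\ell\ge\ell_0$ (so $A$ is defined and invertible there, by the choice of $\rho$ fixed before Theorem \ref{thm: StructureRelationsLineaires}); in particular $\alpha^{q^{\ell}}$ is never a pole of $A$ for $\ell\ge\ell_0$, and by hypothesis it is never a pole for $\ell<\ell_0$ either, so $A(\alpha^{q^\ell})$ is always defined. The remaining defect is that $\det A$ may vanish at finitely many of the $\alpha^{q^\ell}$, say for $\ell\in E$ (a finite set, since no accumulation inside the disc). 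The plan is: choose a polynomial matrix gauge transform $S(z)\in\mathrm{GL}_n(\mathbf k(z))$, invertible at $\alpha$, designed so that the new matrix $B(z):=S(z)A(z)S(z^q)^{-1}$ has the property that $\alpha^{q^\ell}$ is not a zero of $\det B$ for any $\ell$: by \eqref{egalitedet}-type identities, $\det B(z)=\det A(z)\cdot \det S(z)/\det S(z^q)$, so it suffices to pick $\det S(z)$ with zeros exactly absorbing the bad factors — e.g. $S$ built from powers of $(z-\alpha^{q^{\ell}})$ for $\ell$ in a suitable finite set, arranged so the telescoping quotient cancels every vanishing of $\det A$ along the orbit. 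One must simultaneously ensure $S$ and $S^{-1}$ introduce no new poles along the orbit and that $S(\alpha)$ is invertible; enlarging the system (taking $m>n$ and letting the extra coordinates carry the auxiliary functions $h_i=f_i$ for $i\le n$ together with shifted/derived companions, exactly as in Lemma \ref{lem: bbc}) gives the elbow room to do this with polynomial, rather than merely rational, data.

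I would then conclude by verifying the two asserted properties. For (i): by construction $B(z)$ and $B(z)^{-1}$ have, along the orbit $\{\alpha^{q^\ell}:\ell\ge 0\}$, neither poles (inherited control from $A$ having no poles on the orbit, plus the polynomial gauge) nor — for $B^{-1}$ — any obstruction, since $\det B$ is by design nonvanishing on the orbit; hence no $\alpha^{q^\ell}$ with $\ell\ge 1$ is a pole of $B$ or $B^{-1}$, which is exactly regularity of $\alpha$ for the new system. For (ii): the first $n$ coordinates of the solution vector of the enlarged system are literally $f_1(z),\dots,f_n(z)$ by the way the gauge and the padding were set up, so $h_i=f_i$ for $i=1,\dots,n$. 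The main obstacle is the bookkeeping in the middle step: producing a single gauge matrix $S(z)$ that simultaneously (a) kills all zeros of $\det A$ occurring anywhere on the forward orbit of $\alpha$, (b) creates no new poles on that orbit, and (c) is invertible at $\alpha$ itself — this is where the finiteness of the set of bad indices $\ell$ (no accumulation point inside the unit disc) and the freedom to pass to a larger meta-system are both essential, and it is the only genuinely delicate point; everything else is a determinant computation of the form \eqref{egalitedet}.
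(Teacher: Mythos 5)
You correctly diagnose why the naive block-diagonal doubling fails, but the route you then propose --- a gauge transformation $B(z)=S(z)A(z)S(z^q)^{-1}$ with $\det S$ chosen so that the telescoping quotient $\det S(z)/\det S(z^q)$ absorbs the zeros of $\det A$ along the orbit --- has a genuine gap at exactly the point you yourself flag as ``delicate''. Controlling $\det B$ is not enough: regularity requires that no $\alpha^{q^\ell}$ be a pole of $B$ itself, and $S(z^q)^{-1}$ has poles at the $q$-th roots of the zeros of $\det S$, which by your own design lie on the forward orbit of $\alpha$. The conjugated matrix is therefore liable to acquire poles precisely where you are trying to remove singularities, and whether these cancel depends on the full matrix $S$, not merely on its determinant. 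You give no construction of such an $S$ (nor of the padding that keeps the first $n$ coordinates equal to the $f_i$), and producing one is not easier than the lemma itself.

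The paper avoids this entirely with a companion-type doubling rather than a conjugation: the vector $(f_1(z),\dots,f_n(z),f_1(z^q),\dots,f_n(z^q))$ is a solution of the system with matrix
\[
B_1(z)=\begin{pmatrix} A(z)-I_n & A(z^q)\\ I_n & 0\end{pmatrix},
\]
whose determinant equals $\det A(z^q)$: the factor $\det A(z)$ disappears, so each doubling shifts the zero set of the determinant one step forward along the orbit, while no new poles are created since by hypothesis $A$ has no poles on the orbit. As $\det A$ vanishes at only finitely many points $\alpha^{q^\ell}$, iterating the doubling $n_0$ times for a suitable $n_0$ yields a system of order $n2^{n_0}$ with determinant $\det A(z^{q^{n_0}})$, for which $\alpha$ is regular and whose first $n$ coordinates are still $f_1(z),\dots,f_n(z)$. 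This is both more elementary and more explicit than the gauge-transform strategy you sketch.
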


\begin{proof}
Pla\c cons-nous dans les conditions du th\'eor\`eme et supposons que le point 
$\alpha$ est singulier (mais tel que la matrice $A(z)$ associ\'ee soit d\'efinie en tout point de la forme 
$\alpha^{q^l}$, comme le permet l'hypoth\`ese). L'id\'ee est alors de \og d\'edoubler le syst\`eme\fg{}
en remarquant que les $2n$ fonctions $f_1(z),\ldots,f_n(z),f_1(z^q),\ldots,f_n(z^q)$ 
satisfont \`a la relation   
\begin{equation}\label{eq: double}
\left( \begin{array}{ c }
     f_1(z) \\
     \vdots \\
     f_n(z)\\
     f_1(z^q)\\
      \vdots \\
     f_n(z^q)\\
  \end{array} \right) = B_1(z)\left( \begin{array}{ c }
     f_1(z^q) \\
     \vdots \\
     f_n(z^q)\\
     f_1(z^{q^2}) \\
     \vdots \\
     f_n(z^{q^2})
  \end{array} \right) \, ,
\end{equation}
o\`u $B_1(z)$ est donn\'e par la matrice suivante :
$$\left(\begin{array}{c|c} \begin{array}{cccc} 
\\ &  A(z) - I_n& &
\\ \\
 \end{array} & A(z^q) 
\\ \hline
\\I_n & \begin{array}{ccc} 0 
 \end{array}
\end{array}\right) \, .$$ 
Notons d'abord que par hypoth\`ese la matrice $B_1(z)$ est bien d\'efinie en tout point de la forme $\alpha^{q^l}$. Ainsi, si $\alpha$ est singulier,  
cela signifie n\'ecessairement qu'il existe un entier $l$ tel que $\det B_1(\alpha^{q^l})=0$.  
D'autre part, on a $\det B_1(z)=\det A(z^q)$.  Comme les racines de $\det B_1(z)$ sont en nombre fini,  
il existe un entier $n_0$ tel que  $\det B_1(\alpha^{q^l})\not=0$ pour tout $l \geq n_0$.  
En it\'erant $n_0$ fois la m\'ethode de d\'edoublement, on obtient un syst\`eme du type:
\begin{equation}\label{eq: iter}
\left( \begin{array}{ c }
     f_1(z) \\
     \vdots \\
     f_n(z)\\
     f_1(z^q)\\
      \vdots \\
     f_n(z^q)\\
     \vdots \\
     f_1(z^{q^{n_0}})\\
      \vdots \\
     f_n(z^{q^{n_0}})\\
  \end{array} \right) = B_{n_0}(z)\left( \begin{array}{ c }
      f_1(z^q) \\
     \vdots \\
     f_n(z^q)\\
     f_1(z^{q^2})\\
      \vdots \\
     f_n(z^{q^2})\\
     \vdots \\
     f_1(z^{q^{n_0+1}})\\
      \vdots \\
     f_n(z^{q^{n_0+1}})\\
  \end{array} \right) \, ,
\end{equation}
o\`u  $B_{n_0}(z)\in {\rm GL}_{n2^{n_0}}(\Q(z))$ est d\'efinie r\'ecursivement par : 
$$
B_j(z) := \left(\begin{array}{c|c} \begin{array}{cccc} 
\\ &  B_{j-1}(z) - I_{n2^{j-1}}& &
\\ \\
 \end{array} & B_{j-1}(z^q) 
\\ \hline
\\I_{n2^{j-1}} & \begin{array}{ccc} 0 
 \end{array}
\end{array}\right) \, 
$$ 
et $B_0(z):=A(z)$. 
On obtient donc que $\det B_{n_0}(z) = \det B_0(z^{q^{n_0}})$, ce qui implique que le point $\alpha$ est r\'egulier pour le syst\`eme associ\'e \`a la matrice 
$B_{n_0}(z)$. En posant $m=n2^{n_0}$ et $h_{an+i}(z) := f_i(z^{q^{a}})$ pour $i=1,\ldots,m$ et $a=0,\ldots, 2n_0$, 
on obtient le r\'esultat souhait\'e.  
\end{proof}

Nous aurons \'egalement besoin du lemme de descente suivant. 

\begin{lem}\label{lem: descente}
Soient $h_1(z),\ldots,h_n(z)$ des fonctions analytiques dans un voisinage de l'origine et \`a coefficients dans un corps de nombres ${\bf k}$. Soit 
$\alpha\in {\bf k}$  dans le domaine de 
convergence de ces fonctions.  
Supposons qu'il existe $w_1(z),\ldots,w_n(z)\in \Q[z]$, non tous nuls, tels que 
\begin{equation}\label{eq: lem1}
w_1(z)h_1(z) + \ldots + w_n(z)h_n(z) =0 \, ,
\end{equation}
avec $w_i(\alpha)=0$ pour tout $i$ dans un ensemble $ \mathcal I$ et  $w_{i_0}(\alpha)\not= 0$ pour un certain indice $i_0\in\{1,\ldots,n\}\setminus \mathcal I$.   
Alors il existe $w'_1(z),\ldots,w'_n(z)\in {\bf k}[z]$, non tous nuls, tels que 
$$
w'_1(z)h_1(z) + \ldots + w'_n(z)h_n(z) =0 \, ,
$$
avec $w'_i(\alpha)=0$ pour tout $i$ dans  $\mathcal I$ et $w'_{i_0}(\alpha)\not=0$. 
\end{lem}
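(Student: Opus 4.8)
The statement is entirely about linear algebra over the extension of fields $\Q(z)/{\bf k}(z)$ together with the specialization map $\ev_\alpha$, so the plan is to argue by a Galois-type averaging / linear-algebra argument over $\bf k$. First I would observe that the hypothesis says that the $\Q(z)$-vector space
$$
W := \left\{ (w_1(z),\ldots,w_n(z)) \in \Q(z)^n \ \middle|\ \sum_{i=1}^n w_i(z)h_i(z) = 0 \right\}
$$
actually contains a nonzero vector $\w(z)=(w_1(z),\ldots,w_n(z))$ (with polynomial coordinates, after clearing denominators) whose specialization $\w(\alpha)$ has $i_0$-th coordinate nonzero and vanishes on $\mathcal I$. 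Because the $h_i$ have coefficients in $\bf k$, the space $W$ is \emph{defined over} ${\bf k}(z)$: it admits a basis $\bv_1(z),\ldots,\bv_r(z)$ with coordinates in ${\bf k}[z]$ (solve the linear system $\sum w_i h_i=0$ by comparing Taylor coefficients — this is a homogeneous linear system with coefficients in $\bf k$, so its solution module over ${\bf k}[z]$, or over ${\bf k}(z)$, has a ${\bf k}[z]$-basis, and this basis remains a $\Q(z)$-basis of $W$ by flat base change). Writing $\w(z) = \sum_{j=1}^r c_j(z)\bv_j(z)$ with $c_j(z)\in\Q(z)$ and clearing denominators, I may assume $c_j(z)\in\Q[z]$ and $c_j(\alpha)$ not all zero.

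Next I would set up a pure linear-algebra reformulation at the point $\alpha$. Consider the ${\bf k}$-linear map $\Phi : {\bf k}^r \to {\bf k}^n$ sending the coefficient vector $(a_1,\ldots,a_r)$ to $\sum_j a_j \bv_j(\alpha)\in{\bf k}^n$; extend scalars to get $\Phi_{\Q} : \Q^r\to\Q^n$. The hypothesis provides a vector $\a=(c_1(\alpha),\ldots,c_r(\alpha))\in\Q^r$ with $\Phi_\Q(\a)$ having $i_0$-coordinate nonzero and all $\mathcal I$-coordinates zero. I want a vector $\a'\in{\bf k}^r$ with the same two properties. This is now the heart of the matter: the image $V:=\mathrm{Im}\,\Phi$ is a ${\bf k}$-subspace of ${\bf k}^n$, and $\mathrm{Im}\,\Phi_\Q = V\otimes_{\bf k}\Q$; I must show that the ${\bf k}$-affine-linear conditions ``$i_0$-coordinate nonzero, $\mathcal I$-coordinates zero'' which are satisfiable by a $\Q$-point of $V\otimes\Q$ are already satisfiable by a ${\bf k}$-point of $V$. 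The conditions ``$\mathcal I$-coordinates zero'' cut out a ${\bf k}$-subspace $V':=V\cap\{x_i=0,\ i\in\mathcal I\}$ of $V$, defined over $\bf k$, and $V'\otimes\Q = (V\otimes\Q)\cap\{x_i=0,\ i\in\mathcal I\}$ since intersection of subspaces commutes with flat base change. The $\Q$-vector we have lies in $V'\otimes\Q$ and has nonzero $i_0$-coordinate; hence the linear functional ``$x_{i_0}$'' is not identically zero on $V'\otimes\Q$, therefore not identically zero on $V'$ itself (a ${\bf k}$-linear functional that kills a ${\bf k}$-subspace kills its $\Q$-span). So there is $\a'\in{\bf k}^r$ with $\sum_j a'_j\bv_j(\alpha)\in V'$ and $i_0$-coordinate nonzero — exactly what is needed.

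Finally I would assemble the conclusion: put $\w'(z):=\sum_{j=1}^r a'_j\,\bv_j(z)$. Since each $\bv_j(z)\in W$ and has coordinates in ${\bf k}[z]$, the vector $\w'(z)=(w'_1(z),\ldots,w'_n(z))$ has coordinates $w'_i(z)\in{\bf k}[z]$, is nonzero (its $i_0$-th coordinate specializes to something nonzero at $\alpha$), and satisfies $\sum_i w'_i(z)h_i(z)=0$ because it is a ${\bf k}[z]$-combination of elements of $W$. Its specialization $\w'(\alpha)$ has $w'_i(\alpha)=0$ for $i\in\mathcal I$ and $w'_{i_0}(\alpha)\neq 0$, completing the proof. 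The only genuinely substantive point is the descent of the affine-linear solvability condition from $\Q$ to ${\bf k}$, which — as explained — reduces to the elementary facts that forming the solution module of a linear system, intersecting with a coordinate subspace, and testing non-vanishing of a linear functional all commute with the flat field extension ${\bf k}(z)\hookrightarrow\Q(z)$ (equivalently ${\bf k}\hookrightarrow\Q$); no arithmetic input about $\alpha$ beyond $\alpha\in{\bf k}$ is used.
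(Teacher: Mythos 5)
Your overall strategy --- descending the relation $\w(z)=(w_1(z),\ldots,w_n(z))$ to ${\bf k}$ by expressing it in a basis of the relation module that is itself defined over ${\bf k}[z]$, then doing pure linear algebra on the specialized vectors at $\alpha$ --- is viable and genuinely different from the paper's proof. The paper never touches the relation module: it writes the coefficients of the $w_i(z)$ in a ${\bf k}$-basis $1,\varphi,\ldots,\varphi^{h-1}$ of the finite extension ${\bf k}_0={\bf k}(\varphi)$ that they generate, uses the transcendence of $z$ over $\mathbb C$ (and the fact that the $h_i$ have coefficients in ${\bf k}$) to split the functional identity into $h$ identities over ${\bf k}[z]$, uses $\alpha\in{\bf k}$ to split the numerical conditions $w_i(\alpha)=0$, resp.\ $w_{i_0}(\alpha)\neq 0$, along the same basis, and keeps the one component $l_0$ on which $w_{i_0}$ survives. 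The second half of your argument is correct: cutting a ${\bf k}$-subspace by the coordinate hyperplanes $x_i=0$, $i\in\mathcal I$, commutes with the extension of scalars ${\bf k}\hookrightarrow\Q$, and a ${\bf k}$-linear functional that is nonzero on $V'\otimes_{\bf k}\Q$ is already nonzero on $V'$.

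There is, however, a genuine gap at the step ``writing $\w(z)=\sum_j c_j(z){\bf v}_j(z)$ with $c_j(z)\in\Q(z)$ and clearing denominators, I may assume $c_j(z)\in\Q[z]$ and $c_j(\alpha)$ not all zero.'' What your argument actually needs is the specialized identity $\w(\alpha)=\sum_j c_j(\alpha){\bf v}_j(\alpha)$, i.e.\ that $\w(\alpha)$ lies in the $\Q$-span of the ${\bf v}_j(\alpha)$. Clearing a common denominator $D(z)$ only yields $D(\alpha)\,\w(\alpha)=\sum_j \tilde c_j(\alpha){\bf v}_j(\alpha)$, and if $D(\alpha)=0$ this is vacuous: the right-hand side is the zero vector, whose $i_0$-th coordinate vanishes, so the very hypothesis you are trying to transport is destroyed. ``$c_j(\alpha)$ not all zero'' is neither sufficient nor the relevant condition. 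The gap can be repaired, but it needs an argument you did not supply: take ${\bf v}_1,\ldots,{\bf v}_r$ to be a ${\bf k}[z]$-module basis of the full polynomial solution module $W_{\bf k}:=\{\w\in{\bf k}[z]^n \mid \sum_i w_i h_i=0\}$. Since $\Q[[z]]$ is an integral domain, $W_{\bf k}$ is saturated in ${\bf k}[z]^n$, hence a direct summand; it follows both that $W\cap\Q[z]^n=\bigoplus_j\Q[z]{\bf v}_j$ (so the $c_j$ are honest polynomials and evaluation at $\alpha$ is legitimate) and that ${\bf v}_1(\alpha),\ldots,{\bf v}_r(\alpha)$ are linearly independent. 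Equivalently, invoke the ${\bf k}[z]$-analogue of Lemma \ref{alglineaire}. With that repair your proof closes; the paper's more elementary route sidesteps the issue entirely because it never re-expands $\w$ in a basis whose specialization at $\alpha$ could degenerate.
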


\medskip

\begin{proof} 
Les polyn\^omes $w_1(z),\ldots,w_n(z)$  \'etant en nombre fini et \`a coefficients dans $\Q$, leurs coefficients engendrent une extension de corps de degr\'e fini, disons $h$, sur 
${\bf k}$. Notons ${\bf k}_0\subset \mathbb C$ une telle extension et $m$ le maximum des degr\'es des $w_i(z)$. 
Soit $\varphi$ tel que  ${\bf k}_0={\bf k}(\varphi)$.  
Chaque polyn\^ome $w_i(z)$ peut donc s'\'ecrire sous la forme :
$$
w_i(z) = \sum_{k=0}^m \left(\sum_{l=0}^{h-1} \lambda(i,k,l) \varphi^l \right)z^k\, ,
$$
o\`u les coefficients $\lambda(i,k,l)$ sont tous dans ${\bf k}$.  

Soit $i_0$ comme dans l'\'enonc\'e. 
Comme $w_{i_0}(\alpha)\not= 0$ et que $1,\varphi,\ldots,\varphi^{h-1}$ sont lin\'eairement ind\'ependants sur ${\bf k}$,  
il existe un indice $l_0$ tel que 
\begin{equation}\label{eq: palpha}
\sum_{k=0}^m \lambda(i_0,k,l_0) \alpha^k
\not= 0.
\end{equation}

La relation (\ref{eq: lem1}) donne une relation de d\'ependance lin\'eaire sur ${\bf k}_0$ entre les fonctions $z^kh_i(z)$, $k=0,1,\ldots m$, $i=1,\ldots,n$, \`a savoir : 
 $$
\sum_{l=0}^{h-1}\left(\sum_{k=0}^m \sum_{i=1}^n \lambda(i,k,l) z^kh_i(z) \right)\varphi^l    = 0\, .
$$ 
Comme l'ind\'etermin\'ee $z$ est transcendante sur $\mathbb C$ et que les coefficients des $h_i(z)$ sont des \'el\'ements de ${\bf k}$ , on  
 en d\'eduit que pour tout $l$ :
$$
\sum_{k=0}^m\sum_{i=1}^n \lambda(i,k,l)z^kh_i(z)  = 0\, .
$$ 
On obtient notamment la relation de d\'ependance lin\'eaire suivante sur ${\bf k}[z]$ entre les $h_i(z)$ :
$$
 w'_1(z)h_1(z) +\cdots + w'_n(z)h_n(z) =0\, , 
 $$
 o\`u $w'_i(z) :=  \sum_{k=0}^m \lambda(i,k,l_0)z^k\in {\bf k}[z]$.  Notons que d'apr\`es (\ref{eq: palpha}), 
 $w'_{i_0}(\alpha)\not=0$. Cela implique au passage que 
 $w'_{i_0}(z)\not=0$ et donc que la relation est non triviale. 

 Pour conclure, il ne reste donc plus qu'\`a v\'erifier que $w'_i(\alpha)=0$ pour $i\in \mathcal I$.  
Pour un tel $i$, nous savons que $w_i(\alpha)=0$, ce qui signifie que 
 $$
 \sum_{k=0}^m \left(\sum_{l=0}^{h-1} \lambda(i,k,l) \varphi^l \right)\alpha^k = 0 \, 
 $$
 c'est-\`a-dire 
 $$
  \sum_{l=0}^{h-1} \left(\sum_{k=0}^m \lambda(i,k,l) \alpha^k \right)\varphi^l = 0 \, .
 $$
 Comme $\alpha\in{\bf k}$, on obtient que pour tout $l$ :
$$
 \sum_{k=0}^m \lambda(i,k,l) \alpha^k  = 0 \, .
 $$ 
Le choix  $l=l_0$ donne l'\'egalit\'e $w'_i(\alpha)=0$, ce qui conclut la d\'emonstration. 
\end{proof}

\medskip

Nous pouvons \`a pr\'esent d\'emontrer le point (i) du th\'eor\`eme \ref{thm: baker}.

\begin{proof}[D\'emonstration du point (i) du th\'eor\`eme \ref{thm: baker}] 
Soient $f_1(z),\ldots,f_n(z)$ des fonctions $q$-mahl\'eriennes.   
Soient $\alpha\in\Q$, $0<\vert \alpha\vert <1$,  un nombre qui n'est p\^ole d'aucune de ces fonctions et ${\bf k}$ un corps de nombres contenant 
$\alpha$ ainsi que les coefficients des $f_i$.

\medskip

Comme les fonctions $f_1(z),\ldots,f_n(z)$ sont  $q$-mahl\'eriennes, on peut trouver un syst\`eme du type (\ref{eq: systeme}) 
d'ordre $m_0\geq n$, associ\'e \`a une matrice $B(z)$, et admettant des solutions  $f_1(z),\ldots,f_{m_0}(z)$ o\`u, pour 
$i=n+1,\ldots,m_0$, les fonctions $f_i(z)$ sont de nouvelles fonctions $q$-mahl\'eriennes, toutes d\'efinies en $\alpha$.   
En utilisant le Lemme \ref{lem: bbc},  on obtient un m\'eta-syst\`eme mahl\'erien, disons d'ordre $m_1\geq m_0$, pour lequel aucun des $\alpha^{q^l}$ 
n'est p\^ole d'un des coefficients de la matrice $B_1(z)$ associ\'ee \`a ce syst\`eme. 
Notons que ce changement de syst\`eme conduit \`a remplacer les fonctions 
$f_1(z),\ldots,f_{m_0}(z)$ par des fonctions  $g_1(z),\ldots,g_{m_1}(z)$ telles que, pour tout $i=1,\ldots, m_0$, on ait 
$g_i(\alpha)=\lambda_if_i(\alpha)$ avec $\lambda_i\in {\bf k}\setminus \{0\}$, $\lambda_i\not=0$. 
Une fois cette premi\`ere \'etape r\'ealis\'ee, on utilise le Lemme \ref{lem: dedoublement}. Il nous permet d'obtenir un syst\`eme mahl\'erien encore plus gros, 
disons d'ordre $m_2 \geq m_1$, pour lequel le point $\alpha$ est r\'egulier et dont les solutions $h_1(z),\ldots,h_{m_2}(z)$ v\'erifient $h_i(z)=g_i(z)$ pour $i=1\ldots,m_1$. 
Comme les $\lambda_i$ sont tous non nuls et dans $\bf k$, 
la d\'ependance lin\'eaire sur $\bf k$ 
(ou sur $\Q$) des $f_i(\alpha)$ est \'equivalente \`a celle des $g_i(\alpha)$.  Sans perte de g\'en\'eralit\'e, on peut donc supposer que $f_i(z)=g_i(z)$ pour 
$i=1,\ldots,m_0$.

\medskip

En r\'esum\'e,  nous pouvons donc supposer sans perte de g\'en\'eralit\'e qu'il existe un syst\`eme mahl\'erien, disons d'ordre $m \geq n$, dont les solutions 
$h_1(z),\ldots,h_m(z)$ v\'erifient $h_i(z)=f_i(z)$ pour 
$i=1,\ldots,n$ et tel que $\alpha$ est un point r\'egulier pour ce syst\`eme. 
Supposons \`a pr\'esent qu'il existe $w_1,\ldots,w_n\in \Q$, non tous nuls, tels que 
$$
w_1f_1(\alpha) + \ldots + w_nf_n(\alpha) =0 \, .
$$
En appliquant le th\'eor\`eme \ref{thm: pph} \`a notre syst\`eme, on obtient l'existence de polyn\^omes $w_0(z),\ldots,w_m(z)$,  
non tous nuls et appartenant \`a $\mathbb C[z]$, 
tels que 
\begin{equation*}\label{eq:fg}
w_0(z)+ w_1(z)h_1(z) +\cdots + w_m(z)h_m(z)  =0, 
 \end{equation*}
avec $w_i(\alpha) =w_i$ pour $i=1,\ldots,n$  et $w_i(\alpha)=0$  pour $ i=n+1,\ldots,m$ et  $ i=0$.   
Puisque les $w_i$ sont non tous nuls, on peut choisir un indice $i_0$, $1\leq i_0\leq n$, tel que $w_{i_0}(\alpha)\not=0$.  
Le Lemme \ref{lem: descente} nous donne alors l'existence de polyn\^omes  $w'_0(z),\ldots,w'_m(z)$, non tous nuls et appartenant \`a ${\bf k}[z]$, tels que 
$$
 w'_0(z)+w'_1(z)h_1(z) +\cdots + w'_m(z)h_m(z) =0\, , 
 $$
o\`u  $w'_i(\alpha)=0$  pour $ i=n+1,\ldots,m$ et $ i=0$, et $w'_{i_0}(\alpha)\not=0$. 
En sp\'ecialisant au point $\alpha$, il vient: 
\begin{equation}\label{eq: relalpha}
 w'_1(\alpha)f_1(\alpha) +\cdots + w'_n(\alpha)f_n(\alpha) =0 \,. 
\end{equation}
Puisque les $w'_i(z)$ sont \`a coefficients dans $\bf k$ et que $\alpha\in \bf k$, on a $w'_i(\alpha)\in \bf k$ pour tout $i$. 
Comme $w'_{i_0}(\alpha)\not=0$, la relation (\ref{eq: relalpha}) est non triviale et les nombres $f_1(\alpha),\ldots, f_n(\alpha)$ sont donc 
lin\'eairement d\'ependants sur $\bf k$, 
ce qui termine la d\'emonstration. 
\end{proof}


\section{D\'emonstrations des th\'eor\`emes  \ref{thm: baker}, \ref{thm: StructureRelationsLineaires}  
et \ref{thm: paspole}  }\label{sec: preuves}

Dans cette partie, nous utilisons la version homog\`ene du th\'eor\`eme de Philippon, \`a savoir le th\'eor\`eme \ref{thm: pphHomogene}, 
pour en d\'eduire les th\'eor\`emes  
\ref{thm: paspole},  \ref{thm: StructureRelationsLineaires}, puis   \ref{thm: baker}.

\begin{proof}[Preuve du Th\'eor\`eme \ref{thm: paspole}]
Fixons un entier $l$ comme donn\'e dans l'\'enonc\'e. On a 
\begin{equation}\label{eq: mah}
\left( \begin{array}{ c }
     f_1(z) \\
     \vdots \\
     f_n(z)
  \end{array} \right) = A_l(z)\left( \begin{array}{ c }
     f_1(z^{q^l}) \\
     \vdots \\
     f_n(z^{q^l})
  \end{array} \right)  \, ,
\end{equation}
et l'hypoth\`ese faite sur $l$ assure que le point $\alpha^{q^l}$ est r\'egulier pour ce syst\`eme. 
Dans la suite, quitte \`a remplacer $q$ par $q^l$ et $A$ par $A_l$, on supposera que $l=1$, et donc que $\alpha^q$ est un point r\'egulier pour le syst\`eme de d\'epart.  

Notons $d$ l'ordre maximal des p\^oles des coefficients de la matrice $A(z)$ au point $\alpha$. Nous allons raisonner par r\'ecurrence sur l'entier $d$.   

Si $d=0$, alors $\alpha$ n'est pas un p\^ole de $A(z)$ et il n'y a rien \`a faire.  

On suppose \`a pr\'esent 
que la propri\'et\'e est vraie jusqu'\`a l'entier $d-1$
et que $\alpha$ est un p\^ole d'ordre $d\geq 1$ pour $A(z)$. Notons que chaque coefficient de $A(z)$ peut s'\'ecrire  sous la forme 
$$
\frac{P(z)}{Q(z)(z-\alpha)^k} \, ,
$$
o\`u $P$ et $Q$ sont des polyn\^omes, $Q(\alpha)P(\alpha)\not=0$ et $k\leq d$. \`A multiplication par une constante pr\`es, 
les polyn\^omes $P$ et $Q$ sont uniques. 
En notant $T(z)$ le plus petit commun multiple de ces polyn\^omes $Q$,   
on peut d\'ecomposer la matrice $A(z)$  de la mani\`ere suivante :  
\begin{equation}
\label{DecompositionEuclidienne}
A(z) = T(z)^{-1} \left(A_0(z) + \frac{A_1}{z-\alpha} + \cdots + \frac{A_d}{(z- \alpha)^d} \right)\, ,
\end{equation}
o\`u $A_0(z)$ est une matrice \`a coefficients dans ${\bf k}[z]$  et o\`u les $A_i$, $i=1,\ldots,d$, sont des matrices \`a coefficients dans ${\bf k}$ et $A_d\not=0$.   
En multipliant la relation \eqref{eq: mah} par $(z-\alpha)^d$ et en \'evaluant en $\alpha$, on trouve
\begin{equation}
\label{annulationpole}
 A_d \left( \begin{array}{ c }
     f_1(\alpha^{q}) \\
     \vdots \\
     f_n(\alpha^{q})
  \end{array} \right) = 0 \, ,
\end{equation}
puisque les fonctions $f_i$ sont toutes d\'efinies en $\alpha$. 
Comme $\alpha^q$ est un point r\'egulier pour le syst\`eme mahl\'erien associ\'e \`a $A(z)$, le th\'eor\`eme \ref{thm: pphHomogene}  implique l'existence d'une matrice $C(z)$, 
\`a coefficients dans $\Q[z]$, telle que 
\begin{equation}\label{eq:C}
 C(z)\left( \begin{array}{ c }
     f_1(z^{q}) \\
     \vdots \\
     f_n(z^{q}) 
  \end{array} \right) = 0  \;\;\;\; 
et  \;\;\;\;  
 C(\alpha) = A_d \, .  
\end{equation}
L'argument de descente d\'ej\`a utilis\'e dans la d\'emonstration du lemme \ref{lem: descente} montre alors que l'on peut en fait choisir $C(z)$ \`a coefficients dans ${\bf k}[z]$. 
On peut donc \'ecrire 
$$
A_d = C(z) + (z-\alpha)D(z) \, ,
$$
 o\`u $D(z)$ est une matrice \`a coefficients dans ${\bf k}[z]$. 
Posons alors 
$$
B_0(z) := A(z) - T(z)^{-1}\frac{C(z)}{(z-\alpha)^d} \cdot
$$
Pour $B_0(z)$, $\alpha$ est un p\^ole d'ordre $d-1$, et on a 
\begin{equation*}
\left( \begin{array}{ c }
     f_1(z) \\
     \vdots \\
     f_n(z)
  \end{array} \right) = B_0(z)\left( \begin{array}{ c }
     f_1(z^{q}) \\
     \vdots \\
     f_n(z^{q})
  \end{array} \right)  \, .
\end{equation*}
Pour l'instant, rien ne garantit que la matrice $B_0(z)$ est r\'eguli\`ere en $\alpha^q$, ni m\^eme qu'elle est inversible. 
On montre \`a pr\'esent que l'on peut modifier l\'eg\`erement la matrice $B_0(z)$ pour que ce soit le cas. 
Par hypoth\`ese la matrice $A(z)$ est inversible sur le disque \'epoint\'e $D(0,\rho)^\star$. 
Le d\'eterminant de cette matrice est ainsi une fraction rationnelle ne s'annulant pas sur le disque \'epoint\'e $D(0,\rho)^\star$. 
Il existe donc un entier $N_0$ tel que toute matrice $E$ 
de $\mathcal M_n(\overline{\mathbb Q})$ pour laquelle il existe $\xi \in D(0,\rho)^\star$ tel que 
$$
\Vert A(\xi) - E \Vert < |\xi|^{N_0} \,,
$$ 
est inversible, o\`u $\Vert \cdot \Vert$ d\'esigne la norme infini.  
D'autre part, la matrice $C(z)$ est \`a 
coefficients dans ${\bf k}[z]$. Il existe donc un entier $M$ tel que 
$$
\Vert C(\xi)\Vert < M
$$ 
pour tout $\xi \in \overline{D(0,\rho)}$. On s'int\'eresse maintenant au d\'eveloppement en s\'erie de Laurent de la fraction rationnelle 
$(T(z)(z-\alpha)^d)^{-1}$
$$ 
\frac{1}{T(z)(z-\alpha)^d} = \sum_{l = -r}^\infty c_lz^l \,.
$$
La s\'erie $z^{-N_0} \sum_{l\geq N_0} c_l z^l$ converge absolument sur le compact $\overline{D(0,\rho)}$. 
Il existe donc un entier $N_1\geq N_0$ tel que, pour tout $\xi \in \overline{D(0,\rho)}$, 
$$
\left\vert z^{-N_0} \sum_{l\geq N_1} c_l z^l\right\vert  < \frac{1}{M} \, .
$$
Notons alors 
$$
Q(z) := \sum_{l=-r}^{N_1-1} c_l z^l \in {\bf k}[z,z^{-1}]
$$
et posons 
$$
B_1(z) := B_0(z) + Q(z)C(z) = A(z) + \left(Q(z) - \frac{1}{T(z)(z-\alpha)^d}\right) C(z) \,.
$$
Par construction on a, pour tout $\xi \in D(0,\rho)^\star$,
\begin{eqnarray*}
\left\Vert B_1(\xi) - A(\xi)\right\Vert & \leq & \left\Vert \left(Q(\xi) - \frac{1}{T(\xi)(\xi-\alpha)^d)}\right) C(\xi) \right\Vert
\\ & \leq & \left|\sum_{l \geq N_1} c_l \xi^l \right|\times \Vert C(\xi) \Vert 
\\ & < & |\xi|^{N_0}
\end{eqnarray*}
La matrice $B_1$ est donc inversible sur le disque \'epoint\'e $D(0,\rho)^\star$. Par construction de la matrice $C(z)$, la matrice $B_1(z)$ satisfait elle aussi le syst\`eme
$$\left( \begin{array}{ c }
     f_1(z) \\
     \vdots \\
     f_n(z)
  \end{array} \right) = B_1(z)\left( \begin{array}{ c }
     f_1(z^{q}) \\
     \vdots \\
     f_n(z^{q})
  \end{array} \right)  \, ,$$
  et le point $\alpha^q$ est r\'egulier pour ce syst\`eme. Enfin, $\alpha$ est un p\^ole d'ordre au maximum $d-1$, pour la matrice $B_1(z)$, comme souhait\'e. 
 
  Notons que cette d\'emonstration fournit une m\'ethode permettant de d\'eterminer une telle matrice $B_1(z)$. En effet, le th\'eor\`eme  \ref{BaseRelationsLin\'eaires1} 
  permet de calculer une matrice $C(z)$ comme en \eqref{eq:C}.  Il suffit ensuite de d\'eterminer de fa\c con explicite un entier $N_1$ convenable afin d'en 
  d\'eduire $B_1(z)$.

En appliquant l'hypoth\`ese de r\'ecurrence \`a ce syst\`eme,  on obtient l'existence d'une  matrice $B(z)$ satisfaisant aux propri\'et\'es voulues. 
Cela conclut la d\'emonstration de la premi\`ere partie du th\'eor\`eme.   

\medskip

Supposons \`a pr\'esent que les fonctions $f_1(z),\ldots,f_n(z)$ sont lin\'eairement ind\'ependantes sur ${\bf k}(z)$. D'apr\`es ce que l'on vient de d\'emontrer,  
on peut trouver une matrice $B(z)$ de sorte que 
$$
\left( \begin{array}{ c }
     f_1(z) \\
     \vdots \\
     f_n(z)
  \end{array} \right) = B(z) \left( \begin{array}{ c }
     f_1(z^{q^l}) \\
     \vdots \\
     f_n(z^{q^l})
  \end{array} \right) \, 
$$
et que $\alpha$ ne soit pas un p\^ole de $B(z)$. 
D'autre part, on a \'egalement : 
$$
\left( \begin{array}{ c }
     f_1(z) \\
     \vdots \\
     f_n(z)
  \end{array} \right) = A_l(z) \left( \begin{array}{ c }
     f_1(z^{q^l}) \\
     \vdots \\
     f_n(z^{q^l})
  \end{array} \right) \, .
$$
Par diff\'erence, on obtient donc 
$$
\left(B(z)^{-1} - A_l(z)^{-1} \right).\left( \begin{array}{ c }
     f_1(z) \\
     \vdots \\
     f_n(z)
  \end{array} \right) = 0 \, .
$$
Les fonctions $f_1(z),\ldots,f_m(z)$ \'etant lin\'eairement ind\'ependantes sur ${\bf k}(z)$, cela implique que 
$$
A_l(z) = B(z) \, ,
$$ 
comme voulu. 
\end{proof}

\begin{rem}Notons {\it a contrario} que si les fonctions $f_1(z),\ldots,f_n(z)$ sont lin\'eairement d\'ependantes sur ${\bf k}(z)$, on peut toujours les obtenir 
comme solution d'un syst\`eme mahl\'erien ayant un p\^ole au point $\alpha$. En effet, la d\'ependance lin\'eaire des fonctions $f_i$ implique 
l'existence d'une matrice non nulle $C(z)$ telle que 
$$
 C(z)\left( \begin{array}{ c }
     f_1(z) \\
     \vdots \\
     f_n(z) 
  \end{array} \right) = 0\, .
$$
Les fonctions $f_1(z),\ldots,f_n(z)$ sont alors solutions du syst\`eme mahl\'erien associ\'e \`a la matrice 
$$
B(z) := A(z) + \frac{1}{(z-\alpha)^r}C(z) \, .
$$
En choisissant l'entier $r$ assez grand, on peut toujours garantir que $B(z)\in {\rm GL}_n({\bf k}(z))$ et que $\alpha$ est un p\^ole de $B(z)$.  
\end{rem}

\medskip

Nous allons maintenant d\'emontrer le th\'eor\`eme \ref{thm: StructureRelationsLineaires}. 

\begin{proof}[Preuve du Th\'eor\`eme  \ref{thm: StructureRelationsLineaires}]
Soit $\alpha$ un nombre alg\'ebrique non nul et $l$ un entier tel que $|\alpha^{q^l} | < \rho $. 
Supposons que $\alpha$ n'est pas un p\^ole de $A_l(z)$. 
Rappelons que notre objectif est de montrer l'\'egalit\'e suivante :  
\begin{equation}
\label{StructureRelationsLineaires}
{\rm Rel}_{{\bf k}}(f_1(\alpha),\ldots,f_n(\alpha)) = {\rm ker}_{{\bf k}}A_l(\alpha) + {\rm ev}_\alpha\left({\rm Rel}_{{\bf k}(z)}(f_1(z),\ldots,f_n(z))\right) \, .
\end{equation}

Notons tout d'abord que l'inclusion $\supset$ est banale. 
Pour d\'emontrer l'inclusion inverse,  fixons $\lambd:=(\lambda_1,\ldots,\lambda_n) \in  {\rm Rel}_{\bf k}(f_1(\alpha),\ldots,f_n(\alpha))$. 
Posons \'egalement $\f(z) := (f_1(z),\ldots,f_n(z))$. 
En it\'erant le syst\`eme mahl\'erien, on trouve 
\begin{align*}
0 &= \langle \lambd \,, \, (f_1(\alpha),\ldots,f_n(\alpha)) \rangle \\
&=\left\langle \lambd \, ,\, A_l(\alpha)(f_1(\alpha^{q^l}),\ldots,f_n(\alpha^{q^l})) \right\rangle 
\\
&= \left\langle \lambd A_l(\alpha)\, , \, (f_1(\alpha^{q^l}),\ldots,f_n(\alpha^{q^l})) \right\rangle \,.
\end{align*}
Le point $\alpha^{q^l}$ \'etant par hypoth\`ese r\'egulier, le th\'eor\`eme \ref{thm: pphHomogene} implique que 
\begin{equation}\label{relationdecomposee1}
\lambd A_l(\alpha) \in {\rm ev}_{\alpha^{q^l}}\left({\rm Rel}_{ {\bf k}(z)}(f_1(z),\ldots,f_n(z))\right) \,.
\end{equation}
Nous allons montrer l'inclusion d'espaces vectoriels suivante : 
\begin{equation}
\label{inclusionespacerelations}
{\rm ev}_{\alpha^{q^l}}\left({\rm Rel}_{{\bf k}(z)}(\f(z))\right) \subset {\rm ev}_\alpha\left({\rm Rel}_{ {\bf k}(z)}(\f(z))\right) A_l(\alpha) \, .
\end{equation}
En effet, si $\bf{v}(z)\in {\rm Rel}_{{\bf k}(z)}(\f(z))$, alors, on a 
\begin{align*}
0&= \left\langle \v(z^{q^l}) , (f_1(z^{q^l},\ldots,f_n(z^{q^l}) \right\rangle \\
&= \left\langle \v(z^{q^l})A_l(z)^{-1} , \f(z) \right\rangle \,.
\end{align*}
On en d\'eduit que 
$ \v(z^{q^l}) A_l(z)^{-1} \in {\rm Rel}_{ {\bf k}(z)}(\f(z))$, 
ou encore que
$$ 
\v(z^{q^l}) \in  {\rm Rel}_{{\bf k}(z)}(\f(z)) A_l(z) \, .
$$
En \'evaluant en $\alpha$, il vient 
$$
\v(\alpha^{q^l}) = {\rm ev}_{\alpha^{q^l}}(\v(z))\in \ev_\alpha\left({\rm Rel}_{{\bf k}(z)}(\f(z))\right) A_l(\alpha)\, ,
$$ 
ce qui montre l'inclusion \eqref{inclusionespacerelations}.

En combinant \eqref{relationdecomposee1} et \eqref{inclusionespacerelations}, on trouve l'existence d'un vecteur 
${\boldsymbol \mu} \in \ev_\alpha\left({\rm Rel}_{ {\bf k}(z)}(\f(z))\right)$ tel que 
$$
\lambd A_l(\alpha) = {\boldsymbol \mu} A_l(\alpha) \, .
$$
Ainsi, on obtient bien le r\'esultat souhait\'e, \`a savoir :  $\lambd = (\lambd- {\boldsymbol \mu}) + {\boldsymbol \mu}$,  
o\`u $(\lambd-{\boldsymbol \mu}) \in {\rm ker}_{\bf k}A_l(\alpha)$ et ${\boldsymbol \mu} \in \ev_\alpha\left({\rm Rel}_{ {\bf k}(z)}(\f(z))\right)$. 
\end{proof}

Nous sommes \`a pr\'esent en mesure de prouver le th\'eor\`eme \ref{thm: baker}. 

\begin{proof}[D\'emonstration du th\'eor\`eme \ref{thm: baker}] 
Comme les fonctions $f_1(z),\ldots,f_n(z)$ sont $q$-mahl\'eriennes, on peut trouver un entier $m \geq n$, des fonctions $f_{n+1}(z),\ldots,f_{m}(z)$ 
et une matrice $A(z)\in {\rm GL}_m({\bf k}(z))$  tels que 
\begin{equation*}
\left( \begin{array}{ c }
     f_1(z) \\
     \vdots \\
     f_m(z)
  \end{array} \right) = A(z)\left( \begin{array}{ c }
     f_1(z^q) \\
     \vdots \\
     f_m(z^q)
  \end{array} \right)  \, .
\end{equation*}

D'apr\`es le th\'eor\`eme \ref{thm: paspole}, quitte \`a changer $q$ en $q^l$ et \`a modifier la matrice $A(z)$, 
on peut supposer que $\alpha$ n'est pas un p\^ole de $A(z)$ et que $\alpha^q$ est un point r\'egulier 
pour ce syst\`eme. \'Etant donn\'e un corps ${\bf k}_0$, on note 
$$
E_{{\bf k}_0} :=  \left\{(w_1,\ldots,w_m) \in {\bf k}_0^m\ \mid \ w_{n+1} = \cdots = w_m= 0\right\} \, .
$$
Ainsi, 
\begin{equation}\label{eq: pi}
{\rm Rel}_{\Q}(f_1,(\alpha),\ldots,f_n(\alpha))  =\pi\left({\rm Rel}_{\Q}(f_1,(\alpha),\ldots,f_m(\alpha)) \cap E_{\Q}\right)
\end{equation}
o\`u $\pi$ d\'esigne la projection des $n$ premi\`eres coordonn\'ees de  $\mathbb C^m$ dans $\mathbb C^n$.  
 Le th\'eor\`eme \ref{thm: StructureRelationsLineaires} implique alors l'\'egalit\'e suivante : 
$$
{\rm Rel}_{\Q}(f_1,(\alpha),\ldots,f_m(\alpha))
 =   {\rm ker}_{ \Q}A(\alpha) + {\rm ev}_\alpha\left({\rm Rel}_{ \Q(z)}(f_1(z),\ldots,f_m(z)\right)
 \, .
$$
Comme la matrice $A(z)$ est \`a coefficients dans ${\bf k}(z)$, on a : 
$$
{\rm ker}_{ \Q}A(\alpha) = {\rm Vect}_{ \Q} \left({\rm ker}_{\bf k}A(\alpha)\right) \, .
$$
D'autre part, les fonctions $f_i$ \'etant \`a coefficients dans ${\bf k}$, on a \'egalement que :  
$$
{\rm Rel}_{ \Q(z)}(f_1(z),\ldots,f_m(z)) =  {\rm Vect}_{ \Q(z)}  \left({\rm Rel}_{{\bf k}(z)}(f_1(z),\ldots,f_n(z)\right) \, .
$$
Enfin, on a $E_{\Q} = {\rm Vect}_{\Q} \, E_{\bf k}$. 
On a donc 
$$
{\rm Rel}_{\Q}(f_1,(\alpha),\ldots,f_m(\alpha)) \cap E_{\Q} = {\rm Vect}_{ \Q} \left({\rm Rel}_{\bf k}\left(f_1,(\alpha),\ldots,f_m(\alpha)\right)  \cap E_{\bf k} \right) \, 
$$
et l'\'egalit\'e (\ref{eq: pi})  entraine  alors que
$$
{\rm Rel}_{\Q}\left(f_1,(\alpha),\ldots,f_n(\alpha)\right) = {\rm Vect}_{ \Q} {\rm Rel}_{\bf k}\left(f_1,(\alpha),\ldots,f_n(\alpha)\right) \, ,
$$
comme souhait\'e.
\end{proof}

\section{Relations lin\'eaires entre solutions d'un syst\`eme mahl\'erien}\label{sec: relfonc}

Dans cette section, nous \'etudions les relations fonctionnelles  de d\'ependance lin\'eaire 
entre les solutions d'un syst\`eme mahl\'erien. 
 
Fixons, pour toute cette section, les notations suivantes. 
Soient  ${\bf k}$ un corps de nombres et 
$f_1(z),\ldots,f_n(z) \in {\bf k}\{z\}$ des solutions d'un syst\`eme mahl\'erien du type  (\ref{eq: systeme}).    
On notera \'egalement de la m\^eme fa\c con $\f(z):=(f_1(z),\ldots,f_n(z)) \in {\bf k}[[z]]^n$ et $\f(z) := \sum_{i=0}^\infty \f_i z^i\in{\bf k}^n[[z]]$, 
o\`u $\f_i$ d\'esigne le vecteur colonne de $\k^n$ dont les coordonn\'ees correspondent aux $i$-\`emes coefficients des fonctions 
$f_1(z),\ldots,f_n(z)$. On rappelle que ${\bf k}[z]_h$ d\'esigne l'ensemble des polyn\^omes \`a coefficients dans ${\bf k}$ de degr\'e au plus $h$.  
On notera aussi 
$$
{\rm Rel}_{{\bf k}[z]_h}(f_1(z),\ldots,f_n(z)) = ({\bf k}[z]_h)^n \cap {\rm Rel}_{{\bf k}(z)}(f_1(z),\ldots, f_n(z)) \, ,
$$ 
le $\bf k$-espace vectoriel des relations lin\'eaires polynomiales de hauteur inf\'erieure ou \'egale \`a $h$. 
\'Etant donn\'es deux entiers  $h$ et $M$, on d\'efinit la matrice 
\begin{equation*}
{\mathcal S}(h,M,{\bf f}) := \left(\begin{array}{cccccc} \f_0 & \f_1 & \cdots & \f_h 
& \cdots & \f_M \\ 0 & \f_0 & \ddots & \ddots & \ddots & \f_{M-1} \\ 
\vdots & \ddots & &&& \vdots \\ 0 & \cdots & 0 & \f_0 & \cdots & \f_{M-h} \end{array}\right) \, ,
\end{equation*}
o\`u $\f_i$ d\'esigne le vecteur colonne de $\k^n$ dont les coordonn\'ees correspondent aux $i$-\`emes coefficients des fonctions 
$f_1(z),\ldots,f_n(z)$. On pose 
$$
{\rm ker}_{\bf k} {\mathcal S}(h,M,\f) := \left\{ { \lambd} \in ({\bf k}^n)^{h+1} \mid { \lambd} {\mathcal S}(h,M,\f) = 0\right\} \, .
$$
On d\'efinit \'egalement
l'isomorphisme $\varphi_h : (\k^n)^{h+1} \rightarrow (\k[z]_h)^n$ par 
$$
\varphi_h(\w_0,\ldots,\w_h) = \sum_{i=0}^h \w_i z^i \, .
$$
On omettra la d\'ependance en $h$ dans la suite, le notant simplement $\varphi$. 
On obtient alors le r\'esultat suivant. 

\begin{theo}
\label{BaseRelationsLin\'eaires1} Soient ${\bf k}$ un corps de nombres et $f_1(z),\ldots,f_n(z)\in {\bf k}\{z\}$  des fonctions solutions 
d'un syst\`eme du type (\ref{eq: systeme}).   
On note $b(z)$ le plus petit commun multiple des d\'enominateurs des coefficients de $A(z)$, 
$d$ le maximum des degr\'es des coefficients de la matrice $b(z)A(z)$, et $\nu$ la valuation en $0$ du polyn\^ome $b(z)$.  
Soient $h:=4^nd$ et 
$$
c:= \left\lceil \frac{q^{n \left(\frac{qh+d+1}{q-1} + q + 1 \right)}(h+q) + \nu - \frac{h+d}{q-1} }{q-1} \right\rceil \, \cdot
$$  
On a : 
\begin{eqnarray*}
{\rm Rel}_{{\bf k}(z)}(f_1(z),\ldots,f_n(z)) &=& {\rm Vect}_{{\bf k}(z)} \left({\rm Rel}_{{\bf k}[z]_h}(f_1(z),\ldots f_n(z)\right)\\
& =&  {\rm Vect}_{{\bf k}(z)}\left(\varphi \left(\ker_{\bf k} {\mathcal S}(h,c, {\bf f})\right) \right)\, .
\end{eqnarray*} 
\end{theo}

D\'eterminer si les fonctions $f_1(z),\ldots,f_n(z)$ sont lin\'eairement ind\'ependantes peut se faire de fa\c con plus efficace comme 
le montre le r\'esultat suivant. 

\begin{theo}
\label{BaseRelationsLin\'eaires2} 
Conservons les notations du th\'eor\`eme pr\'ec\'edent. 
Soit $h:=\left\lfloor d/(q-1) \right\rfloor$.  
On a les \'equivalences suivantes :  

\medskip

\begin{itemize}
\item[{\rm (i)}] ${\rm Rel}_{{\bf k}(z)}(f_1(z),\ldots,f_n(z)) \neq \{0\}$,

\medskip

 \item[{\rm (ii)}] ${\rm Rel}_{{\bf k}[z]_h}(f_1(z),\ldots,f_n(z)) \neq \{0\}$,
 
 \medskip

\item[{\rm (iii)}] $\ker_{\bf k} {\mathcal S}(h,c,{\bf f}) \neq \{0\}$. 
\end{itemize}
\end{theo}

Le reste de cette section est consacr\'ee aux d\'emonstrations des  th\'eor\`emes \ref{BaseRelationsLin\'eaires1} et 
\ref{BaseRelationsLin\'eaires2}. 
On montre d'abord le lemme de z\'ero suivant.

\begin{lem}
\label{degreannulationbase}
Soit $\w(z)\in ({\bf k}[z]_h)^n$.  
Si la valuation en $z=0$ de la s\'erie $\langle \w(z) , \f(z) \rangle$ est strictement sup\'erieure \`a 
\begin{equation}
\label{eq:degreannulationbase}
\left\lceil \frac{q^{n \left(\frac{qh+d+1}{q-1} + q + 1 \right)}(h+q) + \nu - \frac{h+d}{q-1} }{q-1} \right\rceil 
\end{equation} 
alors
$$
\langle \w(z) , \f(z) \rangle = 0 \, .
$$
\end{lem}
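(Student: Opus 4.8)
The plan is to argue by contradiction: assuming $R(z):=\langle\w(z),\f(z)\rangle\neq 0$, I will bound its valuation at $z=0$ and show that it cannot exceed~\eqref{eq:degreannulationbase}. The only inputs are the Mahler system, the $q$-adic decomposition of polynomials, and elementary bookkeeping on valuations and degrees; no transcendence result is needed. First I record the basic functional identity: multiplying $\f(z)=A(z)\f(z^{q})$ by $b(z)$ gives $b(z)R(z)=\langle\bu(z),\f(z^{q})\rangle$ with $\bu(z):=(b(z)A(z))^{T}\w(z)\in(\k[z])^{n}$ of degree at most $h+d$. Writing the $q$-adic decomposition $\bu(z)=\sum_{j=0}^{q-1}z^{j}\bu^{(j)}(z^{q})$, where each $\bu^{(j)}$ has degree at most $\lfloor(h+d)/q\rfloor$, one obtains
\[
b(z)\,R(z)=\sum_{j=0}^{q-1}z^{j}R_{j}(z^{q}),\qquad R_{j}(z):=\langle\bu^{(j)}(z),\f(z)\rangle .
\]
The key elementary point is that the $q$ summands $z^{j}R_{j}(z^{q})$ have pairwise distinct valuations modulo $q$, so no cancellation can occur between them; hence the valuation of the left-hand side equals $\min\{\,j+q\cdot\operatorname{val}R_{j}\ :\ R_{j}\neq 0\,\}$.

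Next I introduce the finite-dimensional space on which this recursion lives. Choose $H:=\bigl\lceil(qh+d+1)/(q-1)\bigr\rceil$; one checks that $H\ge h$ and $\lfloor(H+d)/q\rfloor\le H$, so the $\k$-vector space $U:=(\k[z]_{H})^{n}$ is stable under the operation sending a polynomial vector to the $j$-th $q$-adic block of $(bA)^{T}$ times that vector, for each $j=0,\dots,q-1$. Consequently $\mathcal S:=\{\langle v,\f\rangle:v\in U\}$ is a $\k$-vector space of power series with $D:=\dim_{\k}\mathcal S\le\dim_{\k}U=n(H+1)$, it contains $R$, and by the previous paragraph it is stable under the recursion above: every $S\in\mathcal S$ satisfies $b(z)S(z)=\sum_{j=0}^{q-1}z^{j}S_{j}(z^{q})$ with all $S_{j}\in\mathcal S$.

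The heart of the argument is then a valuation descent inside $\mathcal S$. Given a nonzero $S\in\mathcal S$ of valuation $v$, the no-cancellation observation provides an index $j^{*}$ with $\nu+v=j^{*}+q\cdot\operatorname{val}S_{j^{*}}$, so $S':=S_{j^{*}}\in\mathcal S$ is nonzero with $\operatorname{val}S'=(\nu+v-j^{*})/q$; in particular $\operatorname{val}S'<v$ whenever $v>\nu/(q-1)$. Starting from $S_{0}:=R$ and iterating produces a sequence $S_{0},S_{1},S_{2},\dots\in\mathcal S\setminus\{0\}$ with $\operatorname{val}S_{i+1}\ge(\operatorname{val}S_{i})/q-1$, hence $\operatorname{val}S_{i}\ge(\operatorname{val}R)/q^{i}-2$ for every $i$. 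If $\operatorname{val}R$ exceeded the threshold~\eqref{eq:degreannulationbase} — which, up to the accumulated $\nu$-shifts and the roundings, is $q^{D'}$ times a small correction, with $D'=n\bigl(\tfrac{qh+d+1}{q-1}+q+1\bigr)\ge\dim_{\k}U\ge D$ — then $\operatorname{val}S_{i}>\nu/(q-1)$ for $i=0,\dots,D-1$, so $\operatorname{val}S_{0}>\operatorname{val}S_{1}>\cdots>\operatorname{val}S_{D}$ would be pairwise distinct. But power series with pairwise distinct valuations are $\k$-linearly independent, which would give $D+1$ independent vectors in the $D$-dimensional space $\mathcal S$ — a contradiction. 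Hence $R=0$.

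I expect the main difficulty to be purely computational bookkeeping: choosing the stable degree $H$ so that $\dim_{\k}U$ stays below the exponent permitted by~\eqref{eq:degreannulationbase}, and then tracking the geometric blow-up by powers of $q$ together with the accumulated $\nu$-shifts (equivalently, the valuation of $b(z)b(z^{q})\cdots b(z^{q^{k-1}})$ produced by iterating the recursion) and the various $\lfloor\cdot\rfloor$ and $\lceil\cdot\rceil$ roundings, so that the resulting bound matches the stated formula exactly. The conceptual content — iterating inside a finite-dimensional space of power series that is stable under the $q$-adic Mahler recursion, and exploiting that distinct $q$-adic blocks cannot cancel — is short, and the precise numerology of~\eqref{eq:degreannulationbase} is where this bookkeeping lands.
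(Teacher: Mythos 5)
Your argument is sound in its essentials and takes a genuinely different route from the paper's. The paper works on the coefficient side: it stacks windows of Taylor coefficients into vectors $\g_i$, shows that the Mahler system turns the bundled vectors $\G_i$ into an orbit of a finite family of matrices $C_0,\ldots,C_{q-1}$ inside a fixed space $W$ of dimension at most $n(qm+h)$, proves a stabilization lemma for the nested spaces $V_l={\rm Vect}\{\G_0,\ldots,\G_l\}$ (if $V_a$ survives a test up to index roughly $q(a+e)$, the chain is constant from $a$ on), deduces that stabilization occurs before an index of size about $q^{\dim W}$, and concludes because $\w$, being orthogonal to all $\g_i$ with $i<M$, must then be orthogonal to $\g_M$. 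You work instead on the series side: the space $\mathcal{S}=\{\langle v,\f\rangle : v\in(\k[z]_H)^n\}$, with $H=\lceil (qh+d+1)/(q-1)\rceil$, is finite dimensional and stable under the $q$-adic section operators $S\mapsto S_j$ defined by $b(z)S(z)=\sum_j z^jS_j(z^q)$, and a nonzero $R\in\mathcal{S}$ of very large valuation would generate, by valuation descent, $\dim\mathcal{S}+1$ elements of $\mathcal{S}$ with pairwise distinct valuations — impossible. Both are elementary zero lemmas with the same exponential shape of threshold; yours replaces the paper's monoid-orbit/stabilization mechanism by a Cartier-type descent, which is shorter and makes the origin of the exponent (one descent step per dimension, each step dividing the valuation by $q$) more transparent, while the paper's version has the (mild) advantage of also exhibiting explicitly the stabilized space of coefficient vectors, which is reused elsewhere.

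One point of your bookkeeping does need repair, exactly where you flag the $\nu$-shifts. The simplified inequality ${\rm val}\,S_{i+1}\ge {\rm val}\,S_i/q-1$ discards the term $\nu/q$, and with it your requirement becomes ${\rm val}\,R>q^{D-1}\bigl(\nu/(q-1)+2\bigr)$; since the bound \eqref{eq:degreannulationbase} grows in $\nu$ only like $\nu/(q-1)$, and $\nu$ is not controlled by $d$ or $h$ in general, this crude version is not implied by the hypothesis when $\nu$ is large, so the contradiction would not be reached. The fix is immediate from your own exact relation $\nu+{\rm val}\,S_i=j^{*}+q\,{\rm val}\,S_{i+1}$: with $c:=\nu/(q-1)-1$ one gets ${\rm val}\,S_{i+1}-c\ge({\rm val}\,S_i-c)/q$, so the descent stays above $\nu/(q-1)$ for $D$ steps as soon as ${\rm val}\,R>q^{D-1}+\nu/(q-1)-1$, with $D\le n(H+1)$. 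Since the exponent $n\bigl(\frac{qh+d+1}{q-1}+q+1\bigr)$ in \eqref{eq:degreannulationbase} exceeds $n(H+1)+1\ge D+1$ and the numerator carries the extra factor $h+q$, the stated threshold dominates $q^{D-1}+\nu/(q-1)-1$ with room to spare, and your proof then goes through.
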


\begin{proof}
 Choisissons $\w(z)$ comme dans l'\'enonc\'e, et supposons par l'absurde que la valuation en $z=0$ de la s\'erie 
 $\langle \w(z) , \f(z) \rangle$ est finie et \'egale \`a
\begin{equation} \label{ordreannulation}
 M > \left\lceil \frac{q^{n \left(\frac{qh+d+1}{q-1} + q + 1 \right)}(h+q) + \nu - \frac{h+d}{q-1} }{q-1} \right\rceil  \,.
\end{equation} 
On montre tout d'abord qu'on peut se ramener au cas o\`u $b(z) = z^\nu$. En effet, posons 
$$
\beta(z) := b(z)z^{-\nu} \qquad \text{ et } \qquad u(z) := \prod_{i=0}^\infty \beta(z^{q^i}) \, .
$$
Il vient $u(z) = z^{-\nu}b(z)u(z^q)$ et, en posant $\tilde\f (z) := u(z)\f(z)$, on v\'erifie que $\tilde\f (z)$ est solution du syst\`eme  
\begin{equation}
\label{evitementpoles}
\tilde\f (z) =  z^{-\nu}\tilde A(z) \tilde \f(z^q),
\end{equation}
o\`u $\tilde A(z) = b(z)A(z)$ est une matrice \`a coefficients dans $\k[z]$. 
Comme la valuation en $z=0$ de la s\'erie $u$ est nulle, 
celle de $ \langle \w(z) , \tilde \f(z) \rangle$ est \'egale \`a $M$.   
Dans la suite, nous supposerons donc que
$$
A(z) = z^{-\nu} \tilde A(z), \; \mbox{ avec } \tilde A(z) \in \mathcal M_{n}({\bf k}[z])\,.
$$
Pour tout entier $i$, on consid\`ere le vecteur suivant : 
$$
\g_i := \left(\begin{array}{c} \f_i \\ \vdots \\ \f_{i - h}\end{array}\right) \in (\k^n)^{h+1} \,.
$$
On pose \'egalement $\v := \varphi^{-1}(\w(z))$. 
L'\'egalit\'e \eqref{ordreannulation} se r\'ecrit : 
\begin{equation} \label{ordreannulation1} \left\{
\begin{array}{llll} 
\langle \v , \g_i \rangle&  = & 0 & \forall i < M \, , \\
 \langle \v , \g_M \rangle&  \neq & 0\, . & \end{array} 
\right.
 \end{equation}
Notons $\g(z) := \sum \g_i z^i$ et consid\'erons la matrice par blocs suivante :
$$
B(z) := \left(\begin{array}{cccc} \tilde A(z) & 0 & \cdots &0 \\ z\tilde A(z)  & \vdots 
& & \vdots \\ \vdots & \vdots & \ddots & \vdots \\ z^h\tilde A(z) & 0 & \cdots & 0 \end{array} \right)  \in \mathcal M_{n(h+1)}({\bf k}[z])\, .
$$
On obtient ainsi une nouvelle \'equation mahl\'erienne : 
\begin{equation}
\label{equationmahleriennegrandit}
\g(z) = z^{-\nu}B(z)\g(z^k) \, .
\end{equation}
Remarquons au passage que 
$\deg B \leq \deg \tilde A + h \leq d + h$.   
On peut alors d\'ecomposer la matrice $B(z)$ selon les puissances de $z$
$$
B(z) = \sum_{i=0}^{\deg B} B_i z^i \, ,
$$
o\`u les matrices $B_i$ appartiennent \`a $\mathcal M_{n(h+1)}({\bf k})$. 
Notons alors
$$
e := \left\lfloor \frac{\deg B(z)}{q} \right\rfloor + 1 \,.
$$
Soit $i$ un entier. On consid\`ere l'unique couple d'entiers $(s,j)$ tel que $i = qs + j -\nu$ et $0 \leq j < q$. 
L'identification des termes en $z^i$ dans l'\'equation \eqref{equationmahleriennegrandit} induit la relation suivante : 
\begin{equation}
\label{recurrencecoeffmahlerien}
\g_i = \g_{qs+j-\nu} = \sum_{r=0}^{e-1} B_{qr+j} \g_{s-r} \, ,
\end{equation}
o\`u l'on a pos\'e $B_l= 0$ pour $l > \deg B$, et $\g_l = 0$ pour $l < 0$. 
Fixons $m>0$ et
choisissons alors $i'$ un entier tel que $i\leq i' < i+qm$. Si $i' = qs' + j' - \nu$, avec $j' < q$, on a n\'ecessairement $s \leq s' \leq qm$. On a \'egalement 
\begin{equation}
\label{recurrencecoeffmahlerien2}
\g_{i'}  =  \sum_{r=0}^{e-1} B_{qr+j'} \g_{s'-r} \,.
\end{equation}
Le couple $(i,m)$ \'etant fix\'e, Les vecteurs $\g_l$ intervenant dans le membre de droite sont donc les 
$e+m+1$ vecteurs $\g_{s-e+1},\cdots,\g_{s+m+1}$. 
 D\'efinissons alors l'entier $m$ de la mani\`ere suivante : 
$$ 
m := \left \lfloor \frac{e}{q-1} \right\rfloor + 1 \,.
$$ 
Il est choisi de sorte que $qm \geq e+m+1$. On consid\`ere alors, pour chaque entier $i$, 
le vecteur  
\begin{equation}
\label{vecteurcolonne}
{\bf G}_{i}= \left(\begin{array}{c}
\g_{i+qm-1} \\ 
\vdots \\ 
\g_{i} \end{array}\right)
 \in \k^{n(h+1)qm} \, .
\end{equation}
Remarquons que par d\'efinition des $\g_i$, on a 
$$
\G_i: = \left(\begin{array}{c} \f_{i+qm-1}\\ \f_{i+qm-2} \\ \vdots \\ \f_{i+qm-h} \\ \f_{i+qm-2} \\ \vdots \\ \f_{i-h+1} \end{array}\right) \,.
$$
Les vecteurs $\G_i$ appartiennent donc tous au sous-espace vectoriel $W$ de $\k^{n(h+1)qm}$ form\'e des vecteurs de la forme 
$$
(X_1,X_2,\cdots,X_{h},X_2,X_3,\cdots,X_{h+1},X_3,\cdots,X_{qm+h})^T, \; \text{ o\`u } X_l \in \k^n \,.
$$
L'identit\'e \eqref{recurrencecoeffmahlerien2} assure l'existence de $q$ matrices $C_0,\ldots,C_{q-1} \in \mathcal{M}_{n(h+1)qm}(\k)$ telles que, 
pour tout entier $i$, on a : 
\begin{equation}
\label{recurrenceG} \G_{i}=C_j\G_{s-e+1} \, ,
\end{equation}
o\`u $i=qs+j-\nu$. On consid\`ere \`a pr\'esent la suite d'espaces embo\^{i}t\'es
$$
V_l := {\rm Vect}\left\{\G_0,\ldots,\G_l\right\} \subset W \, .
$$
Pour conclure cette d\'emonstration on aura besoin du lemme suivant.

\begin{lem}\label{stationnarite}
Soit $a \geq (\nu - d)/(q- 1)$ un entier. Si 
$$
V_a=V_{q(a+e)-\nu-1} \,,
$$
alors la suite $(V_l)_{l \geq 0}$ est constante \`a partir du rang $a$.
\end{lem}

\begin{proof}[D\'emonstration du lemme \ref{stationnarite}]
Choisissons un entier $j< q$. Pour tout $r$, l'\'egalit\'e \eqref{recurrenceG} implique que 
$$
C_j \G_r= \G_{q(r+e-1)+j-\nu} \,.
$$
Si $r \leq a$, alors $q(r+e-1)+j-\nu \leq q(a+e)-\nu-1$ et donc 
$$
C_j \G_r \in V_{q(a+e)-\nu-1} = V_a \, .
$$
L'action du mono\"ide engendr\'e par les matrices $C_0,\ldots,C_{q-1}$ stabilise donc l'espace $V_a$. 
Prenons maintenant un entier $b > a$. Comme $a \geq (\nu - d)/(q- 1)$, l'\'egalit\'e \eqref{recurrenceG} assure l'existence d'indices 
$j_1,\ldots,j_r$ et d'un entier $l \leq a$ tels que 
$$
\G_b = C_{j_1}\cdots C_{j_r} \G_l \,.
$$
Donc $\G_b \in V_a$, ce qui termine la d\'emonstration.
\end{proof}

Nous sommes \`a pr\'esent en mesure d'achever la preuve du lemme \ref{degreannulationbase}. 
Les vecteurs $\G_i$ appartiennent tous \`a l'espace $W$. Un calcul rapide montre que la dimension de cet espace n'exc\`ede pas
$n(qm+h)$. Partant de $a =\lfloor (\nu - d)(q-1)^{-1} \rfloor$, et en appliquant de fa\c con r\'ecursive le lemme \ref{stationnarite}, 
on obtient que la suite d'espaces vectoriels embo\^{i}t\'es $V_l$ est n\'ecessairement constante \`a partir du rang
\begin{equation}
\label{rangstation}
\left\lceil \frac{q^{n(h+qm)}(qe-d) + \nu- qe}{q - 1}\right \rceil \,.
\end{equation}
En rempla\c cant $e$ et $m$ par leurs valeurs respectives, on obtient que 
\begin{eqnarray*}
M &> &\left\lceil \frac{q^{n \left(\frac{qh+d+1}{q-1} + q + 1 \right)}(h+q) + \nu - \frac{h+d}{q-1} }{q-1} \right\rceil  \\ 
&\geq& \left\lceil \frac{q^{n(h+qm)}(qe-d) + \nu- qe}{q - 1} \right \rceil\, .
\end{eqnarray*}
La suite $(V_l)_{l \in \N}$ est donc constante \`a partir du rang $M-1$ et 
donc $\G_M \in V_{M-1}$. 

D'apr\`es \eqref{ordreannulation1}, le vecteur $\v=(\w_0,\ldots,\w_h) \in (\k^n)^{h+1}$ est tel que 
$$
\langle \v , \g_i \rangle = 0 \,,
$$
pour tout entier $i < M$. 
Posons ${\bf u} :=(\v,\cdots,\v) \in \k^{n(h+1) qm}$.  Il vient alors que
$$
\langle \bf u , \G_i \rangle = 0 \, ,
$$
 pour tout $i < M$. 
Le vecteur $\bf u$ appartient donc \`a l'orthogonal de $V_{M-1}$. Mais comme $\G_M\in  V_{M-1}$, le vecteur $\bf u$ est aussi orthogonal au vecteur 
$\G_M$ et donc $\langle {\bf u} , \G_M \rangle = 0$. En consid\'erant les $n(h+1)$ derni\`eres coordonn\'ees des vecteurs $\G_M$ et $\bf u$, on voit que cette 
derni\`ere \'egalit\'e est \'equivalente \`a 
$$
\langle \v , \g_M \rangle = 0 \,,
$$
ce qui contredit \eqref{ordreannulation1}. Cela termine la d\'emonstration. 
\end{proof}

Le lemme suivant assure l'existence, lorsque les fonctions $f_1(z),\ldots,f_n(z)$ sont lin\'eairement d\'ependantes sur ${\bf k}(z)$, 
 d'une relation de d\'ependance lin\'eaire de petite hauteur.  

\begin{lem}
\label{DependanceLineaire}
Supposons que les fonctions $f_1(z),\ldots,f_n(z)$ sont lin\'eairement d\'ependantes sur ${\bf k}(z)$. Soit $h:= \lfloor d/(q-1)\rfloor$.  Alors, il existe un vecteur non nul 
$\w(z) :=(w_0(z),\ldots,w_n(z)) \in ({\bf k}[z]_h)^n$  tel que
$$
\sum_{i=1}^n w_i(z)f_i(z)=0 \,.
$$
\end{lem}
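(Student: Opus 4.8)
The plan is to pick a nonzero polynomial linear relation of minimal degree and to show that, if its degree exceeded $\lfloor d/(q-1)\rfloor$, then iterating the Mahler equation once would produce a strictly shorter relation, contradicting minimality. Since the $f_i(z)$ are linearly dependent over ${\bf k}(z)$, the space ${\rm Rel}_{{\bf k}(z)}(f_1(z),\ldots,f_n(z))$ is nonzero, and after clearing denominators it contains a nonzero element of ${\bf k}[z]^n$. So first I would fix a nonzero vector $\w(z)=(w_1(z),\ldots,w_n(z))\in{\bf k}[z]^n$ with $\langle\w(z),\f(z)\rangle=0$ for which $h_0:=\max_i\deg w_i$ is minimal; it then suffices to prove $h_0\leq\lfloor d/(q-1)\rfloor$.

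Next I would transport the relation through the functional equation. Writing $A(z)=b(z)^{-1}\tilde A(z)$ with $\tilde A(z)\in\mathcal M_n({\bf k}[z])$ and $\deg\tilde A\leq d$, the identity $\f(z)=A(z)\f(z^q)$ gives
$$
0=\langle\w(z),\f(z)\rangle=b(z)^{-1}\langle\tilde A(z)^{\mathrm T}\w(z),\f(z^q)\rangle,
$$
hence $\langle\bv(z),\f(z^q)\rangle=0$, where $\bv(z):=\tilde A(z)^{\mathrm T}\w(z)\in{\bf k}[z]^n$ has degree at most $d+h_0$. Splitting $\bv(z)$ according to residues modulo $q$, say $\bv(z)=\sum_{j=0}^{q-1}z^j\bv_j(z^q)$, the identity reads $\sum_{j=0}^{q-1}z^j\langle\bv_j(z^q),\f(z^q)\rangle=0$. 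Each series $\langle\bv_j(z^q),\f(z^q)\rangle$ is supported on exponents divisible by $q$, so comparing coefficients in each residue class modulo $q$ forces $\langle\bv_j(z^q),\f(z^q)\rangle=0$ for every $j$; as $z\mapsto z^q$ is injective on ${\bf k}[[z]]$, this means $\bv_j(z)\in{\rm Rel}_{{\bf k}(z)}(f_1(z),\ldots,f_n(z))$, with $\deg\bv_j\leq(d+h_0)/q$.

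To close the loop, observe that $\tilde A(z)=b(z)A(z)$ is invertible over ${\bf k}(z)$ since $A(z)\in{\rm GL}_n({\bf k}(z))$, so $\bv(z)=\tilde A(z)^{\mathrm T}\w(z)\neq0$; hence at least one $\bv_{j}$ is a nonzero polynomial relation, and minimality of $h_0$ gives $h_0\leq\lfloor(d+h_0)/q\rfloor\leq(d+h_0)/q$, that is $(q-1)h_0\leq d$, so $h_0\leq\lfloor d/(q-1)\rfloor=h$. This is exactly the assertion, the vector $\w(z)$ now lying in $({\bf k}[z]_h)^n$.

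The only point requiring genuine care is the residue-class argument: making precise that a nonzero power series in $z^q$ cannot be cancelled by its $z^j$-shifted copies for $0\leq j<q$, and keeping track of the bound $\deg\bv_j\leq(d+h_0)/q$ so that the final inequality produces exactly the constant $d/(q-1)$ rather than something weaker. One could alternatively normalise first to $b(z)=z^\nu$ as in the proof of Lemme~\ref{degreannulationbase}, multiplying $\f$ by $u(z)=\prod_{i\geq0}\beta(z^{q^i})$ — which alters no linear relation since $u(z)$ is a nonzero element of ${\bf k}[[z]]$ — but this is unnecessary here, as $\nu$ does not enter the bound.
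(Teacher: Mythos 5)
Your proof is correct and follows essentially the same route as the paper: take a relation of minimal degree, multiply through the functional equation by $\tilde A(z)=b(z)A(z)$, split the resulting vector by residues of exponents modulo $q$, and use minimality to get $(q-1)h_0\leq d$. You even make explicit a point the paper leaves implicit, namely that the invertibility of $\tilde A(z)$ over ${\bf k}(z)$ guarantees the transported relation is nonzero.
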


\begin{proof}
Soit $h$ le degr\'e de la plus petite relation lin\'eaire non triviale sur ${\bf k}[z]$ entre les fonctions $f_1(z),\ldots,f_n(z)$, et 
$\w(z)$ le vecteur des coefficients d'une telle relation. Utilisant la relation fonctionnelle \eqref{eq: systeme}, 
on \'ecrit
\begin{eqnarray*}
0 & = & \langle \w(z),\f(z) \rangle 
\\ & = &\langle \w(z),\beta(z)^{-1}\tilde A(z)\f(z^q) \rangle 
\\ & = &  \langle \w(z)\tilde A(z),\f(z^q) \rangle \, .
\end{eqnarray*}
On peut d\'ecomposer le vecteur $\w(z) \tilde A(z) \in \k[z]^n$ selon les restes de puissances de $z$ modulo $q$. 
On obtient alors la d\'ecomposition unique suivante : 
$$
\w(z)\tilde A(z) = \sum_{i=0}^{q-1}z^i \v_i(z^q) \,.
$$
L'identit\'e $\langle \w(z)\tilde A(z),\f(z^q) \rangle =0$ implique alors que  
$$
\langle \v_i(z),\f(z)\rangle = 0 \,,
$$
pour tout $i$, $0\leq i \leq q-1$. 
Comme par hypoth\`ese $\w(z)$ est non nul, il existe un indice $i_0$  tel que le vecteur $\v_{i_0}(z)$ soit non nul. 
Notons $l$ le degr\'e maximal des coefficients de $\v_{i_0}$.  
Par minimalit\'e de $h$, on a $l \geq h$. D'autre part, comme le degr\'e de $\w(z) \tilde A(z)$ est inf\'erieur \`a $h + d$, 
on a
$$
ql + i_0 \leq h + d \, .
$$
On en d\'eduit que $qh\leq h +d$, ce qui permet de conclure. 
\end{proof}

Nous sommes \`a pr\'esent en mesure de prouver les th\'eor\`emes \ref{BaseRelationsLin\'eaires1} et \ref{BaseRelationsLin\'eaires2}. 

\begin{proof}[D\'emonstration du th\'eor\`eme \ref{BaseRelationsLin\'eaires1}]
Posons $h:= 4^nd$ et 
$$
c := \left\lceil \frac{q^{n \left(\frac{qh+d+1}{q-1} + q + 1 \right)}(h+q) + \nu - \frac{h+d}{q-1} }{q-1} \right\rceil  \, \cdot
$$
Les inclusions  
$$
 {\rm Vect}_{\k(z)} \left({\rm Rel}_{{\bf k}[z]_h}(f_1(z),\ldots,f_n(z)\right) \subset{\rm Rel}_{\k(z)}(f_1(z),\ldots,f_n(z))
$$ 
et 
$$
{\rm Rel}_{{\bf k}[z]_h}(f_1(z),\ldots,f_n(z))  \subset \varphi \left(\ker_{\bf k} {\mathcal S}(h,c,\f)\right)
$$  
sont imm\'ediates. 
L'inclusion $ \varphi \left(\ker_{\bf k} {\mathcal S}(h,c,\f)\right)\subset  {\rm Rel}_{{\bf k}[z]_h}(f_1(z),\ldots,f_n(z))$ 
est quant \`a elle une reformulation directe du lemme \ref{degreannulationbase}. Il ne reste donc plus qu'\`a prouver l'inclusion
$$
{\rm Rel}_{\k(z)}(f_1(z),\ldots,f_n(z)) \subset   {\rm Vect}_{\k(z)} \left({\rm Rel}_{{\bf k}[z]_h}(f_1(z),\ldots,f_n(z)\right) \,.
$$
Montrer cette inclusion revient \`a montrer l'existence d'une base de relations lin\'eaires dont les coefficients sont des polyn\^omes 
de degr\'e inf\'erieur \`a $4^nd$. Soit $r$ la dimension de l'espace ${\rm Rel}_{\k(z)}(f_1(z),\ldots,f_n(z))$. 
Nous allons montrer le r\'esultat par r\'ecurrence sur l'entier $r$.

Si $r=0$, il n'y a rien \`a prouver. Supposons alors l'\'egalit\'e vraie pour tout syst\`eme de la forme \eqref{eq: systeme} lorsque 
la dimension de l'espace des relations lin\'eaires entre les fonctions vaut $r-1$. 
Supposons que 
$$
\dim  {\rm Rel}_{\k(z)}(f_1(z),\ldots,f_n(z)) = r \,.
$$
On va r\'eduire le syst\`eme pour se ramener \`a un syst\`eme de taille $n-1$. 
D'apr\`es le lemme \ref{DependanceLineaire}, il existe un vecteur $\w(z) \in (\k[z]_{h_0})^n$, o\`u $h_0=\left\lfloor \frac{d}{q-1} \right\rfloor$, 
tel que 
\begin{equation}
\label{relationpluspetitdegre}
\langle \w(z) , \f(z)  \rangle = w_1(z)f_1(z) + \cdots + w_n(z)f_n(z) = 0 \,.
\end{equation}
On peut supposer, quitte \`a renum\'eroter les fonctions, que $w_n(z) \neq 0$. 
On conjugue alors l'\'equation fonctionnelle de $\f$ par la matrice
$$
S(z) = 
\left(\begin{array}{ccccc} 1 & 0 \cdots & \cdots & 0 \\ 0 & 1 & \ddots &  & \vdots \\ \vdots 
& \ddots & \ddots & \ddots & \vdots \\ 0 & \cdots & 0& 1 & 0 \\ w_1(z) & w_2(z) 
& \cdots & w_{n-1}(z) & w_n(z) \end{array} \right) \,.
$$
On obtient le syst\`eme
$$
\left(\begin{array}{c} f_1(z) \\ \vdots \\ f_{n-1}(z) \\ 0 \end{array}\right)
= S(z)A(z)S(z^q)^{-1} \left(\begin{array}{c} f_1(z^q) \\ \vdots \\ f_{n-1}(z^q) \\ 0 \end{array}\right) \,.
$$
Notons $B(z)$ le mineur principal de taille $(n-1)$ de la matrice $S(z)A(z)S(z^q)^{-1}$. On obtient alors le syst\`eme de taille $(n-1)$ suivant : 
\begin{equation}
\label{systemer\'eduit}
 \left(\begin{array}{c} f_1(z) \\ \vdots \\ f_{n-1}(z) \end{array}\right)=B(z) \left(\begin{array}{c} f_1(z^q) \\ \vdots \\ f_{n-1}(z^q)\end{array}\right) \, .
\end{equation}
Par construction, le polyn\^ome $w_n(z^q)b(z)$ est un multiple commun aux d\'enominateurs des coefficients de la matrice $B(z)$. 
La matrice $w_n(z^q)b(z)B(z)$ est donc \`a coefficients polynomiaux. On peut majorer le degr\'e de ces polyn\^{o}mes par 
$$ 
h_0(q+1) +d \leq \frac{d}{q-1}(q+1) + d \leq d\frac{2q}{q-1} \leq 4d \,.
$$
Comme $w_n(z)\neq 0$, on a 
$$
\dim {\rm Rel}_{\k(z)}(f_1(z),\ldots,f_{n-1}(z)) = r-1
$$ 
et on peut donc appliquer l'hypoth\`ese de r\'ecurrence au syst\`eme (\ref{systemer\'eduit}). 
On obtient : 
$$
{\rm Rel}_{\k(z)}(f_1(z),\ldots,f_{n-1}(z)) \subset {\rm Vect}_{\k(z)} \left( {\rm Rel}_{{\bf k}[z]_{h_1}}(f_1(z),\ldots,f_{n-1}(z))\right) \,,
$$ 
o\`u $h_1 := 4^{n-1}4d= 4^nd=h$.  
D'autre part, on a  
$$
{\rm Rel}_{\k(z)}(f_1(z),\ldots,f_{n}(z)) = {\rm Rel}_{\k(z)}(f_1(z),\ldots,f_{n-1}(z))  
\oplus_{\k(z)} \k(z).\w(z) \,,
$$ 
et par hypoth\`ese 
$\w(z) \in {\rm Rel}_{{\bf k}[z]_h}(f_1(z),\ldots,f_n(z))$. 
On en d\'eduit 
$$ 
{\rm Rel}_{\k(z)}(f_1(z),\ldots,f_n(z)) \subset   {\rm Vect}_{\k(z)} \left({\rm Rel}_{{\bf k}[z]_h}(f_1(z),\ldots,f_n(z))\right) \,,
$$
comme voulu. Cela conclut la d\'emonstration. 
\end{proof}

\begin{proof}[D\'emonstration du th\'eor\`eme \ref{BaseRelationsLin\'eaires2}]
Le th\'eor\`eme d\'ecoule imm\'ediatement des lemmes  \ref{degreannulationbase} et \ref{DependanceLineaire}. 
\end{proof}

\section{Deux exemples de syst\`emes automatiques}\label{sec: ex}

Nous illustrons ici les r\'esultats obtenus \`a travers deux exemples. Pour les d\'efinitions relatives aux automates finis et aux suites automatiques, nous renvoyons le lecteur 
\`a \cite{AS}. 

\subsection{Premier exemple}

On d\'efinit la suite binaire $(a_n)_{n\geq 0}$ de la fa\c con suivante : $a_n=0$ si le d\'eveloppement en base $3$ de l'entier $n$ a un nombre pair de 
chiffres \'egaux \`a $2$ 
et $a_n=1$ si ce nombre est impair. Il s'agit d'une variante de la c\'el\`ebre suite de Thue--Morse. Notons $f(z) := \sum_{n\geq 0} a_n z^n \in \mathbb Q\{z\}$ 
la s\'erie g\'en\'eratrice associ\'ee ; elle est analytique dans le disque unit\'e ouvert. 
Par d\'efinition, il s'agit d'une s\'erie $3$-automatique. Comme la suite $(a_n)_{n\geq 0}$ ne prend qu'un nombre fini de valeurs enti\`eres et qu'elle 
n'est pas ultimement p\'eriodique, on obtient facilement que $f(z)$ est transcendante. 
Pour $\alpha$ alg\'ebrique, $0<\vert \alpha\vert <1$, on se propose d'\'etudier 
la transcendance du 
nombre automatique $f(\alpha)$. 


\begin{figure}[htbp]
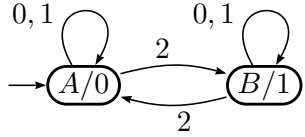

\centering
\VCDraw{%
        \begin{VCPicture}{(0,-1)(4,2)}
 \StateVar[A/0]{(0,0)}{a}  \StateVar[B/1]{(4,0)}{b}
\Initial[w]{a}
\LoopN{a}{0,1}
\LoopN{b}{0,1}
\ArcL{a}{b}{2}
\ArcL{b}{a}{2}
\end{VCPicture}
}
\caption{Un $3$-automate produisant la suite $(a_n)_{n\geq 0}$.}
  \label{AB:figure:thue}
\end{figure}

Posons $f_1(z):=f(z)$ et $f_2(z) := \sum_{n\geq 0} (1-a_n)z^n$. On v\'erifie sans peine que ces deux fonctions sont $3$-mahl\'eriennes 
et solutions du syst\`eme fonctionnel suivant :
\begin{equation*}
\left( \begin{array}{ c }
     f_1(z) \\
     f_2(z)\\
  \end{array} \right) = A(z)\left( \begin{array}{ c }
     f_1(z^3) \\
     f_2(z^3)\\
  \end{array} \right) \, ,
\end{equation*}
o\`u
$$
A(z) := \left(\begin{array}{cc} 
1+z & z^2 \\

z^2 & 1+ z
\end{array}\right)  \, .
$$
Le th\'eor\`eme \ref{BaseRelationsLin\'eaires2} permet de montrer facilement que les fonctions $f_1(z)$ et $f_2(z)$ sont lin\'eairement ind\'ependantes sur $\mathbb C(z)$. 
Avec les notations du th\'eor\`eme \ref{BaseRelationsLin\'eaires2}, on a 
ici $n=2$, $d=2$, $q=3$ et $\nu=0$. On en d\'eduit que  $h=1$ et $c=(3^{14} \times 8- 3)/4$.  On v\'erifie alors que les quatre premi\`eres colonnes 
$$\left(\begin{array}{c} 0 \\ 1 \\ 0 \\ 0 \end{array} \right),  
\left(\begin{array}{c}
 0 \\ 1 \\ 0 \\ 1 \end{array} \right),  
\left(\begin{array}{c} 1 \\ 0 \\ 0 \\ 1 \end{array} \right) \text{ et }
 \left(\begin{array}{c} 0 \\ 1 \\ 1 \\ 0 \end{array} \right)
 $$
de la matrice ${\mathcal S}(1,(3^{14}\times 8-3)/4,\f)$ sont lin\'eairement ind\'ependantes, o\`u $\f(z) := (f_1(z),f_2(z))$. 
Il suit que  $\ker_{\mathbb Q}{\mathcal S}(1,(3^{14}\times 8-3)/4,\f)=\{0\}$, 
ce qui permet de conclure. On a cependant par d\'efinition la relation affine suivante :
$$
f_1(z)+f_2(z) = \frac{1}{1-z} \, \cdot
$$

Le d\'eterminant de $A(z)$ n'a qu'une racine dans le disque unit\'e ouvert ; il s'agit du point $\phi:= (1-\sqrt 5)/2$.  
On obtient donc que l'ensemble des points singuliers de notre syst\`eme est 
$$
{\mathcal E} := \left\{ \phi^{1/3^{l}} \mid l\geq 1\right\} \, .
$$
En un point alg\'ebrique $\alpha$ qui n'est pas dans $\mathcal E$, le th\'eor\`eme \ref{thm: pphHomogene} implique directement que $f(\alpha)$ est transcendant. En effet, dans 
le cas contraire on aurait \'egalement $f_2(\alpha)$ alg\'ebrique, car  $f_1(z)+f_2(z)=1/(1-z)$, et on en d\'eduirait donc  une relation lin\'eaire sur $\Q$ entre 
$f_1(\alpha)$ et $f_2(\alpha)$, ce qui est impossible puisque $f_1(z)$ et $f_2(z)$ sont lin\'eairement ind\'ependantes sur $\Q(z)$.   
Notons qu'au point $\phi$, on a 
$$
\left( \begin{array}{ c }
     f_1(\phi) \\
     f_2(\phi)\\
  \end{array} \right) = \left(\begin{array}{cc} 
1+\phi & 1+ \phi \\

1+ \phi & 1+ \phi
\end{array}\right)\left( \begin{array}{ c }
     f_1(\phi^3) \\
     f_2(\phi^3)\\
  \end{array} \right) \,
$$ 
et en particulier $f_1(\phi)=f_2(\phi)$.  Comme $f_1(z)+f_2(z)=1/(1-z)$, on en d\'eduit que $f_1(\phi)= -\phi/2 \in \mathbb Q(\phi)$, en d\'epit du fait que les coefficients de 
$f_1(z)$ soient des entiers et que $f_1(z)$ soit transcendante.   
En raisonnant par r\'ecurrence, on obtient d'ailleurs que $f_1(\phi^{1/3^l}) \in \mathbb Q\left(\phi^{1/3l}\right)$ pour tout entier $l\geq 1$.  
En r\'esum\'e, on obtient  le r\'esultat suivant.

\begin{prop}
Soit $\alpha$ un nombre alg\'ebrique, $0<\vert \alpha\vert <1$. On a l'alternative suivante : 
soit il existe $l\geq 1$ tel que $\alpha = \phi^{1/3^{l}}$ et alors $f(\alpha)\in \mathbb Q\left(\phi^{1/3^{l}}\right)$, 
soit $f(\alpha)$ est transcendant. 
\end{prop}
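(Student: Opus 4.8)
The plan is to split according to whether $\alpha$ lies in the backward orbit of $\phi$ under the map $z\mapsto z^3$. Two facts established above for this example are the only nonelementary ingredients. First, $f_1(z)$ and $f_2(z)$ are linearly independent over $\mathbb C(z)$, hence over $\Q(z)$; this was checked using Theorem \ref{BaseRelationsLin\'eaires2}. Second, $\det A(z)=(1+z)^2-z^4=(1+z-z^2)(1+z+z^2)$, whose only zero in the open unit disk is $\phi=(1-\sqrt5)/2$, the zeros of $1+z+z^2$ lying on the unit circle. Since $A(z)$ has polynomial entries, only the zeros of $\det A(z)$ are relevant to the singularity condition, and $\phi$ is the unique such zero inside the disk; hence a point $\alpha$ with $0<|\alpha|<1$ is regular for the system if and only if $\alpha^{3^l}\neq\phi$ for every $l\geq0$, and otherwise $\alpha\in\{\phi^{1/3^l}:l\geq0\}=\{\phi\}\cup\mathcal E$. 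We shall also use the identity $f_1(z)+f_2(z)=1/(1-z)$.

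Suppose first that $\alpha^{3^l}\neq\phi$ for all $l\geq0$, so that $\alpha$ is a regular point, and assume for contradiction that $f(\alpha)=f_1(\alpha)$ is algebraic. Since $|\alpha|<1$ we have $\alpha\neq1$, so $f_2(\alpha)=1/(1-\alpha)-f_1(\alpha)$ is algebraic as well; two elements of $\Q$ are linearly dependent over $\Q$, hence ${\rm Rel}_{\Q}(f_1(\alpha),f_2(\alpha))\neq\{0\}$. Applying Corollary \ref{coro: pphLineaire} at the regular point $\alpha$ gives ${\rm Rel}_{\Q(z)}(f_1(z),f_2(z))\neq\{0\}$, contradicting the linear independence of $f_1(z)$ and $f_2(z)$ over $\Q(z)$. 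Therefore $f(\alpha)$ is transcendental, which is the second alternative.

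Suppose now that $\alpha^{3^l}=\phi$ for some integer $l\geq0$, i.e.\ $\alpha=\phi^{1/3^l}$; I prove $f_1(\alpha),f_2(\alpha)\in\mathbb Q(\alpha)$ by induction on $l$. For $l=0$ the point is $\phi$: from $\phi^2=1+\phi$ we get $A(\phi)=(1+\phi)\begin{pmatrix}1&1\\1&1\end{pmatrix}$, so evaluating the functional equation at $z=\phi$ and using $f_1(\phi^3)+f_2(\phi^3)=1/(1-\phi^3)$ gives $f_1(\phi)=f_2(\phi)=(1+\phi)/(1-\phi^3)$; since $\phi^3=1+2\phi$ and $1+\phi=\phi^2$ this equals $-\phi/2\in\mathbb Q(\phi)$. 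For the step with $l\geq1$, put $\beta:=\alpha^3=\phi^{1/3^{l-1}}$, so by the induction hypothesis $f_1(\beta),f_2(\beta)\in\mathbb Q(\beta)\subseteq\mathbb Q(\alpha)$; the functional equation reads $f_i(\alpha)=A_{i1}(\alpha)f_1(\beta)+A_{i2}(\alpha)f_2(\beta)$ for $i=1,2$, and the entries $A_{ij}(\alpha)$ are polynomials in $\alpha$, so they lie in $\mathbb Q(\alpha)$; hence $f_1(\alpha),f_2(\alpha)\in\mathbb Q(\alpha)=\mathbb Q(\phi^{1/3^l})$. In particular $f(\alpha)=f_1(\alpha)\in\mathbb Q(\phi^{1/3^l})$, which is the first alternative; the case $\alpha=\phi$ is precisely the computation recorded just above the statement.

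There is no genuine obstacle: the substantial input — Philippon's theorem through the linearised Corollary \ref{coro: pphLineaire}, together with the already verified $\Q(z)$-linear independence of $f_1$ and $f_2$ — is in hand, and the algebraic branch is a direct iteration of the functional equation. The only points requiring a little care are the exact description of the regular locus (using that $A(z)$ is polynomial, so only zeros of $\det A(z)$ matter, and that $\phi$ is the sole such zero in the open disk, $\phi$ itself being a singularity, consistently with $f_1(\phi)=f_2(\phi)$), and the trivial remark that two algebraic numbers are automatically $\Q$-linearly dependent, which is what converts the hypothesis ``$f(\alpha)$ algebraic'' into an actual linear relation to feed into Corollary \ref{coro: pphLineaire}.
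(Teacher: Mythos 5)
Your proof is correct and follows essentially the same route as the paper: transcendence at regular points via Corollary \ref{coro: pphLineaire} combined with the $\Q(z)$-linear independence of $f_1,f_2$ and the identity $f_1(z)+f_2(z)=1/(1-z)$, and algebraicity at the points $\phi^{1/3^l}$ by iterating the functional equation from the base case $f_1(\phi)=f_2(\phi)=-\phi/2$. Note that your explicit treatment of $\alpha=\phi$ (the case $l=0$) in the algebraic branch is in fact needed for the dichotomy to be exhaustive, since $\phi$ itself is a singularity of the system and $f(\phi)$ is algebraic; the restriction to $l\geq 1$ in the statement is an indexation slip of the paper.
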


Comme les fonctions $f_1(z)$ et $f_2(z)$ sont lin\'eairement ind\'ependantes sur $\Q(z)$, les relations de d\'ependance lin\'eaire entre leurs valeurs 
apparaissent donc toutes comme ayant une origine matricielle (elles sont donn\'ees par le noyau des matrices $A_l(\phi)$).  
Cette origine des relations d\'epend en fait du syst\`eme choisi  pour les \'etudier et on assiste \`a un principe des vases  communicants, comme l'illustre la remarque suivante.  
En appliquant l'astuce de d\'edoublement du lemme \ref{lem: dedoublement}, on peut obtenir le syst\`eme suivant :
\begin{equation*}
\left( \begin{array}{ c }
     f_1(z) \\
     f_2(z)\\
     f_1(z^3)\\
     f_2(z^3)
  \end{array} \right) = B(z)\left( \begin{array}{ c }
     f_1(z^3) \\
     f_2(z^3)\\
     f_1(z^9)\\
     f_2(z^9)
  \end{array} \right) \, ,
\end{equation*}
avec 
$$
B(z) := \left(\begin{array}{cccc} 
z & z^2 & 1+z^3& z^6\\

z^2 &  z & z^6& 1+z^3\\

1&0&0&0\\
0&1&0&0
\end{array}\right)  \, .
$$
On peut v\'erifier que $\phi$ est  \`a pr\'esent un point r\'egulier 
 pour ce nouveau syst\`eme. Evidemment, le d\'edoublement du syst\`eme a cr\'e\'e deux relations lin\'eaires libres entre les fonctions 
 $f_1(z),f_2(z),f_3(z)$ et $f_4(z)$, 
\`a savoir,
$$
f_1(z) - (1+z)f_3(z) - z^2f_4(z) = 0 \; \text{ et }\; f_2(z) - z^2f_3(z) - (1+z)f_4(z) \,.
$$
Les fonctions $f_1(z), f_2(z), f_1(z^3)$ et $f_2(z^3)$ sont donc \`a pr\'esent lin\'eairement d\'ependantes 
sur $\mathbb Q(z)$ et il n'y a pas de contradiction. Les relations lin\'eaires entre les fonctions $f_1(z)$ et $f_2(z)$ aux points de l'ensemble $\mathcal E$ 
apparaissent dans ce nouveau syst\`eme  
comme ayant une origine fonctionnelle, c'est-\`a-dire qu'elles sont obtenues comme sp\'ecialisation d'une relation 
sur $\Q(z)$ entre les fonctions  $f_1(z),f_2(z),f_3(z)$ et $f_4(z)$.

\subsection{Second exemple}

Il s'agit d'une variante de l'exemple pr\'ec\'edent. 
On consid\`ere les quatre suites binaires ${\bf a}_1 := (a_{1,n})_{n\geq 0}, {\bf a}_2 :=(a_{2,n})_{n\geq 0}, {\bf a}_3 :=(a_{3,n})_{n\geq 0}$ et ${\bf a}_4 :=(a_{4,n})_{n\geq 0}$ 
d\'efinies comme suit :  
\begin{equation*}
\left\{ \begin{array}{rcl} a_{1,n} & = & 1 \iff (n)_3 \text{ a un nombre pair de $1$ et de $2$}
\\ a_{2,n} & = & 1\iff (n)_3 \text{ a un nombre impair de $1$ et pair de $2$}
\\ a_{3,n} & = & 1 \iff (n)_3 \text{ a un nombre impair de $1$ et de $2$}
\\ a_{4,n} & = & 1 \iff (n)_3 \text{ a un nombre pair de $1$ et impair de $2$} \end{array} \right.
\end{equation*}
On consid\`ere \'egalement les s\'eries g\'en\'eratrices  associ\'ees \`a ces suites, \`a savoir :  
$$
g_i(z) := \sum_{n \geq 0} a_{i,n} z^n, \;\;  1\leq i\leq 4 \, .
$$
Les fonctions $g_i(z)$ sont  $3$-automatiques. Plus pr\'ecis\'ement, \'etant donn\'es des nombres alg\'ebriques $\omega_1,\omega_2,\omega_3,\omega_4$, 
on peut v\'erifier que la suite  $\omega_1{\bf a}_1 + \omega_2{\bf a}_2 +\omega_3{\bf a}_3 + \omega_4{\bf a}_4$ 
est engendr\'e par le $3$-automate suivant : 

\begin{figure}[h]
\centering 
\VCDraw{  \begin{VCPicture}{(0,-5)(4,2)}
    \StateVar[A/\omega_1]{(0,0)}{A} \StateVar[B/\omega_2]{(5,0)}{B}   \StateVar[D/\omega_4]{(0,-4)}{C}    \StateVar[C/\omega_3]{(5,-4)}{D}  
    \Initial{A} 
     \ArcL{A}{B}{1}   \ArcL{B}{A}{1}   \ArcL{D}{C}{1}   \ArcL{C}{D}{1} \ArcL{A}{C}{2}  \ArcL{C}{A}{2}  \ArcL{B}{D}{2}  \ArcL{D}{B}{2}   \LoopN{A}{0} 
    \LoopN{B}{0}  \LoopS{C}{0}   \LoopS{D}{0} 
  \end{VCPicture}}
  \label{AB:figure:3n3m}
\end{figure}

\medskip

\noindent  \'Etant donn\'e un nombre alg\'ebrique $\alpha$, $0<\vert \alpha\vert <1$, on se propose de d\'ecrire les vecteurs 
$(\omega_1,\omega_2,\omega_3,\omega_4)\in \Q^4$ pour lesquels le nombre \og automatique\fg{} 
$$
\omega_1g_1(\alpha) + \omega_2g_2(\alpha) +\omega_3 g_3(\alpha)  + \omega_4 g_4(\alpha)
$$
est transcendant.  

Les fonctions $g_1(z),g_2(z),g_3(z)$ et $g_4(z)$ sont solutions du syst\`eme  suivant : 
\begin{equation*}
\left(\begin{array}{c} g_1(z) \\ g_2(z)\\ g_3(z) \\ g_4(z) \end{array} \right) = \left(\begin{array}{cccc} 
1 & z & 0 & z^2 \\ z& 1 & z^2 & 0 \\ 0 & z^2 & 1 & z \\ z^2 & 0 & z & 1 \end{array} \right). \left(\begin{array}{c} g_1(z^3) \\ g_2(z^3)\\ g_3(z^3) \\ g_4(z^3) \end{array} \right) \, .
\end{equation*}
Dans la suite, on notera 
$$ \g(z) := \left(\begin{array}{c} g_1(z) \\ g_2(z)\\ g_3(z) \\ g_4(z) \end{array} \right) \;\; \text{ et } \;\; A(z) 
:= \left(\begin{array}{cccc} 1 & z & 0 & z^2 \\ z& 1 & z^2 & 0 \\ 0 & z^2 & 1 & z \\ z^2 & 0 & z & 1 \end{array} \right) \, .
$$
Par d\'efinition des suites ${\bf a}_i$, on a la relation affine $g_1(z) + g_2(z) + g_3(z) + g_4(z) = 1/(1-z)$. 
Nous allons montrer qu'il n'existe par contre pas de relation lin\'eaire homog\`ene non triviale entre les $g_i(z)$.  

\begin{lem}\label{lem: linind}
Les fonctions $g_1(z),g_2(z),g_3(z)$ et $g_4(z)$ sont lin\'eairement ind\'ependantes sur $\Q(z)$.
\end{lem}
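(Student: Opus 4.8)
The plan is to invoke Theorem~\ref{BaseRelationsLin\'eaires2} directly, exactly as was done for the first example. For the present system one has $n=4$, $q=3$; the matrix $A(z)$ already has entries in $\Q[z]$, so $b(z)=1$, $\nu=0$, and $d=2$ is the largest degree of an entry of $A(z)$. Hence $h:=\lfloor d/(q-1)\rfloor=1$, and the constant $c$ of Theorem~\ref{BaseRelationsLin\'eaires1} is the explicit integer $c=2\cdot 3^{28}$ (that is, $\lceil(8\cdot 3^{28}-3)/4\rceil$). By Theorem~\ref{BaseRelationsLin\'eaires2}, the functions $g_1,\dots,g_4$ are linearly dependent over $\Q(z)$ if and only if $\ker_{\Q}\mathcal{S}(1,c,\g)\neq\{0\}$, so the whole of Lemma~\ref{lem: linind} reduces to showing that this single, completely explicit kernel is trivial.

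I would then make that kernel concrete. Because exactly one of the four base-$3$ digit conditions defining the sequences ${\bf a}_i$ holds for each integer, the coefficient vector $\g_j=(a_{1,j},\dots,a_{4,j})^{T}\in\Q^{4}$ equals the standard basis vector $e_{i(j)}$, where $i(j)\in\{1,2,3,4\}$ is read off from the parities of the number of digits $1$ and of digits $2$ in the base-$3$ expansion of $j$ (with $i(0)=1$). Consequently the $j$-th column of $\mathcal{S}(1,c,\g)$ is $(e_{i(j)},e_{i(j-1)})^{T}\in\Q^{8}$, with the convention $\g_{-1}=0$; this is a $0/1$ vector with at most two nonzero coordinates, and $\ker_{\Q}\mathcal{S}(1,c,\g)=\{0\}$ is precisely the statement that these columns span $\Q^{8}$. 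The argument then finishes by a finite computation: tabulate $i(0),i(1),i(2),\dots$ over a short initial segment of indices (a dozen or so, far below $c$), record the pairs $(i(j-1),i(j))$ that occur, and exhibit eight indices $j$ for which the vectors $(e_{i(j)},e_{i(j-1)})^{T}$ are linearly independent — mirroring the display of four independent columns used for the first example. Once such a full-rank family is in hand, Theorem~\ref{BaseRelationsLin\'eaires2} yields ${\rm Rel}_{\Q(z)}(g_1,\dots,g_4)=\{0\}$.

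The main obstacle, and the only place where the arithmetic of the digit counts genuinely enters, is exactly this rank verification: one must be sure that the pairs $(i(j-1),i(j))$ arising for small $j$ produce a family of $0/1$ vectors of full rank $8$ in $\Q^{8}$ (equivalently, that the descent through the Mahler equation leaves no nonzero relation of degree $\le 1$ among the $g_i$). An alternative, matrix-free route meets the same difficulty: assuming $\sum_i w_i(z)g_i(z)=0$ with $\deg w_i\le h=1$ by Theorem~\ref{BaseRelationsLin\'eaires2}, one writes $\langle\w(z)A(z),\g(z^{3})\rangle=0$, splits $\w(z)A(z)=\v_0(z^{3})+z\,\v_1(z^{3})+z^{2}\v_2(z^{3})$ according to residues of exponents modulo $3$ (since $\deg(\w A)\le 3$ the vectors $\v_1,\v_2$ are constant and $\v_0$ has degree $\le 1$), uses the $\Q$-linear independence of $g_1,\dots,g_4$ — already visible from the coefficient vectors at $n=0,1,2$ and one further index such as $n=5$, which are the four distinct basis vectors — to force $\v_1=\v_2=0$, and then analyses the residual relation $\v_0=0$, iterating if necessary. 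Everything upstream of the finite check is an immediate application of the general theorems already established in the paper.
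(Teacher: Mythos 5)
Your reduction is fine as far as it goes: invoking Theorem~\ref{BaseRelationsLin\'eaires2} with $n=4$, $q=3$, $d=2$, $\nu=0$, $h=1$ and the explicit $c$ is exactly the paper's strategy, and you correctly identify the columns of ${\mathcal S}(1,c,\g)$ as the vectors $(e_{i(j)},e_{i(j-1)})^{T}$. But the one step you postpone --- exhibiting eight such columns of rank $8$ --- is precisely the step that fails, because the statement of Lemma~\ref{lem: linind} is false. Adding $1$ to an integer changes exactly one base-$3$ digit from $0$ to $1$ or from $1$ to $2$ (trailing $2$'s only become $0$'s), so the number of digits $1$ changes by $\pm 1$ and its parity alternates with $n$. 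Hence $g_1(z)+g_4(z)$, the sum of $z^{n}$ over the $n$ with an even number of digits $1$, equals $1/(1-z^{2})$, while $g_2(z)+g_3(z)=z/(1-z^{2})$; this gives the nontrivial degree-one relation $z\,g_1(z)-g_2(z)-g_3(z)+z\,g_4(z)=0$. Equivalently, the vector $\varphi^{-1}\bigl((z,-1,-1,z)\bigr)=\bigl((0,-1,-1,0),(1,0,0,1)\bigr)$ lies in $\ker_{\Q}{\mathcal S}(1,c,\g)$, so the columns of that matrix span a subspace of dimension at most $7$ and \emph{no} eight of them can be independent. Your own description contains the obstruction: since the parity of the number of $1$'s flips at every step, each pair $(i(j-1),i(j))$ joins $\{1,4\}$ to $\{2,3\}$, and one checks that the resulting $0/1$ columns, together with $(e_1,0)^{T}$, span only a $7$-dimensional space.

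So the gap is not that you left a routine verification to the reader; carried out honestly, the verification refutes the lemma. (The paper's own argument has the same defect: the eight columns displayed there do not match the sequences as defined --- for instance $(2)_3=2$ has an even number of $1$'s and an odd number of $2$'s, so the coefficient vector of $z^{2}$ is $e_4$, not $e_3$ --- and with the correct vectors the rank is $7$.) The dependence also undermines your alternative ``matrix-free'' route, which aims at the same false conclusion, and downstream it contradicts Proposition~\ref{resolutionzeros}: taking $\boldsymbol\omega=(0,1,1,0)$, the number $g_2(\alpha)+g_3(\alpha)=\alpha/(1-\alpha^{2})$ is algebraic for every algebraic $\alpha$, not only for $\alpha=\phi^{1/3^{l}}$. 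What your computation, correctly completed, does establish is that ${\rm Rel}_{\Q(z)}(g_1,\dots,g_4)$ is nonzero and contains $(z,-1,-1,z)$; the example (or the lemma) has to be modified before any independence statement can be proved.
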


\begin{proof}
Comme dans l'exemple pr\'ec\'edent, il s'agit d'une cons\'equence directe du th\'eor\`eme \ref{BaseRelationsLin\'eaires2}. 
Avec les notations du th\'eor\`eme \ref{BaseRelationsLin\'eaires2}, on a 
ici $n=4$, $d=2$, $q=3$ et $\nu=0$. On en d\'eduit que  $h=1$ et $c=(3^{28} \times 8- 3)/4$. 
Ainsi, l'existence d'une relation lin\'eaire non triviale entre les fonctions $g_i(z)$ est \'equivalente \`a la non nullit\'e de l'espace 
$\ker_{\mathbb Q} {\mathcal S}(1, (3^{28} \times 8- 3)/4,\g)$. 
La d\'efinition des fonctions $g_i(z)$ nous permet de calculer explicitement les huit premi\`eres colonnes de cette matrice. Il vient :  
\begin{equation*}
\left(\begin{array}{c}1 \\ 0 \\ 0 \\ 0 \\ 0 \\ 0 \\ 0 \\ 0  \end{array}\right) , 
\left(\begin{array}{c}0 \\ 1 \\ 0 \\ 0 \\1 \\ 0 \\ 0 \\ 0  \end{array}\right) , 
\left(\begin{array}{c}0 \\ 0\\1 \\ 0 \\0 \\ 1 \\ 0 \\ 0  \end{array}\right) ,  
\left(\begin{array}{c}1 \\ 0 \\ 0 \\ 0  \\0 \\ 0\\1 \\ 0    \end{array}\right) ,  
\left(\begin{array}{c}0 \\ 0\\0 \\ 1 \\1 \\ 0 \\ 0 \\ 0   \end{array}\right) ,  
\left(\begin{array}{c}0 \\ 0\\1 \\ 0 \\0 \\ 0\\0 \\ 1  \end{array}\right) ,  
\left(\begin{array}{c}0 \\ 0\\0 \\ 1 \\0 \\ 0 \\ 1 \\ 0  \end{array}\right) ,  
\left(\begin{array}{c}1 \\ 0\\0 \\ 0 \\0 \\ 0\\0 \\ 1  \end{array}\right).
\end{equation*}
On peut v\'erifier que ces huit vecteurs sont lin\'eairement ind\'ependants sur $\Q$. On en d\'eduit que  $\ker_{\mathbb Q} {\mathcal S}(1, (3^{28} \times 8- 3)/4,\g)=\{0\}$. 
Le th\'eor\`eme \ref{BaseRelationsLin\'eaires2} implique alors que les fonctions $g_1(z),g_2(z),g_3(z),g_4(z)$ sont lin\'eairement ind\'ependantes sur $\Q(z)$.
\end{proof}

Comme dans l'exemple pr\'ec\'edent, on voit facilement que le d\'eterminant de la matrice $A(z)$ a une unique racine 
dans le disque unit\'e ouvert qui est encore  $\phi := (1 - \sqrt{5})/2$. 
Les autres racines sont le nombre d'or et les deux racines cubiques primitives de l'unit\'e. 
L'ensemble des points singuliers de notre syst\`eme est donc \`a nouveau l'ensemble 
$$
{\mathcal E} := \left\{ \phi^{1/3^{l}} \mid l\geq 1\right\} \, .
$$
Le th\'eor\`eme \ref{thm: StructureRelationsLineaires} permet alors de montrer le r\'esultat suivant.

\begin{prop}
\label{resolutionzeros}
Soient $\alpha$, $0<\vert \alpha\vert <1$,  un nombre alg\'ebrique et  ${\boldsymbol \omega} := (\omega_1,\omega_2,\omega_3,\omega_4) \in \Q^4$ 
un vecteur de nombres alg\'ebriques non tous \'egaux. Alors, le nombre 
$$
\omega_1g_1(\alpha) + \omega_2g_2(\alpha) +\omega_3 g_3(\alpha)  + \omega_4 g_4(\alpha) 
$$
est alg\'ebrique si, et seulement si, il existe un entier $l\geq 1$ tel que $\alpha = \phi^{1/3^l}$ et 
$\boldsymbol \omega$ est de la forme ${\boldsymbol \mu} + \lambda(1,1,1,1)$ avec
${\boldsymbol \mu}\in \ker_{\Q} A_l(\alpha)$ et $\lambda\in\Q$. 
\end{prop}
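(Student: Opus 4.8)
The proof rests on three ingredients already at hand: the affine identity $g_1(z)+g_2(z)+g_3(z)+g_4(z)=1/(1-z)$, the linear independence of $g_1,\dots,g_4$ over $\Q(z)$ (Lemma~\ref{lem: linind}), and the fact that $\mathcal{E}=\{\phi^{1/3^l}\mid l\ge 1\}$ is exactly the set of singular points, so that $\phi$ itself, not being a root of unity, is a \emph{regular} point. I write $\g(z)=(g_1(z),\dots,g_4(z))$; since $A(z)$, hence each $A_l(z)$, has polynomial entries, no point of the unit disc is a pole of $A_l$ and the relation $\g(z)=A_l(z)\g(z^{3^l})$ holds everywhere in the disc.

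For the sufficiency I would simply evaluate. If $\alpha^{3^l}=\phi$ with $l\ge 1$ and $\boldsymbol\omega=\boldsymbol\mu+\lambda(1,1,1,1)$ with $\boldsymbol\mu\in\ker_\Q A_l(\alpha)$ and $\lambda\in\Q$, then $\langle\boldsymbol\mu,\g(\alpha)\rangle=\langle\boldsymbol\mu A_l(\alpha),\g(\phi)\rangle=0$, whence
$$\omega_1g_1(\alpha)+\cdots+\omega_4g_4(\alpha)=\langle\boldsymbol\mu,\g(\alpha)\rangle+\lambda\bigl(g_1(\alpha)+\cdots+g_4(\alpha)\bigr)=\frac{\lambda}{1-\alpha},$$
which is algebraic.

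For the necessity, suppose $\beta:=\omega_1g_1(\alpha)+\cdots+\omega_4g_4(\alpha)$ is algebraic. Put $\lambda_0:=\beta(1-\alpha)\in\Q$ and $\boldsymbol\omega':=\boldsymbol\omega-\lambda_0(1,1,1,1)$. Using $g_1(\alpha)+\cdots+g_4(\alpha)=1/(1-\alpha)$ one gets $\langle\boldsymbol\omega',\g(\alpha)\rangle=\beta-\lambda_0/(1-\alpha)=0$, so $\boldsymbol\omega'\in{\rm Rel}_\Q(\g(\alpha))$. Were $\boldsymbol\omega'$ a scalar multiple of $(1,1,1,1)$, then $\boldsymbol\omega$ would have all coordinates equal, contrary to the hypothesis; hence $\boldsymbol\omega'\ne 0$ and ${\rm Rel}_\Q(\g(\alpha))\ne\{0\}$. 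Since ${\rm Rel}_{\Q(z)}(\g(z))=\{0\}$ by Lemma~\ref{lem: linind}, Corollary~\ref{coro: pphLineaire} rules out $\alpha$ being regular; thus $\alpha\in\mathcal{E}$, i.e.\ $\alpha=\phi^{1/3^l}$ with $\alpha^{3^l}=\phi$ for a (unique) $l\ge 1$. To conclude I transport the relation to $\phi$: from $\langle\boldsymbol\omega',\g(\alpha)\rangle=0$ and $\g(\alpha)=A_l(\alpha)\g(\phi)$ we obtain $\langle\boldsymbol\omega' A_l(\alpha),\g(\phi)\rangle=0$, that is $\boldsymbol\omega' A_l(\alpha)\in{\rm Rel}_\Q(\g(\phi))$. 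But $\phi$ is regular and ${\rm Rel}_{\Q(z)}(\g(z))=\{0\}$, so Corollary~\ref{coro: pphLineaire} gives ${\rm Rel}_\Q(\g(\phi))=\ev_\phi\bigl({\rm Rel}_{\Q(z)}(\g(z))\bigr)=\{0\}$; therefore $\boldsymbol\omega' A_l(\alpha)=0$, i.e.\ $\boldsymbol\omega'\in\ker_\Q A_l(\alpha)$, and $\boldsymbol\omega=\boldsymbol\omega'+\lambda_0(1,1,1,1)$ has the asserted form with $\boldsymbol\mu=\boldsymbol\omega'$ and $\lambda=\lambda_0$.

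The step requiring the most care is this transport to the base point. Although $\alpha$ is a singularity of the system, so that a priori the relation among the values $g_i(\alpha)$ is of matrix type, its image $\phi=\alpha^{3^l}$ under the $3^l$-th iterate is a \emph{regular} point, and it is precisely the linear independence of $g_1,\dots,g_4$ over $\Q(z)$ at this regular point (via Corollary~\ref{coro: pphLineaire}) that forces the transported relation to vanish; the single remaining degree of freedom, the direction $(1,1,1,1)$, is exactly the one produced by the affine identity $\sum_i g_i=1/(1-z)$, which explains the shape $\boldsymbol\mu+\lambda(1,1,1,1)$ in the conclusion. A slightly heavier alternative would invoke Theorem~\ref{thm: StructureRelationsLineaires} at a large iterate $l+m$ with $|\phi^{3^m}|<\rho$ and use the factorization $A_{l+m}(\alpha)=A_l(\alpha)A_m(\phi)$; the route above sidesteps this and uses only Corollary~\ref{coro: pphLineaire}.
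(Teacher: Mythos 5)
Your sufficiency computation and the first half of your necessity argument (homogenising via $g_1+g_2+g_3+g_4=1/(1-z)$, then excluding regular $\alpha$ by Lemma~\ref{lem: linind} and Corollary~\ref{coro: pphLineaire}) agree with the paper. The gap is in your final transport step, which rests on the assertion that $\phi$ is a regular point of the system: it is not. Since $\phi^2=\phi+1$, one has $\det A(\phi)=0$, so $A(z)^{-1}$ has a pole at $\phi$ and $\phi$ is a singularity (the exponent $\ell=0$ is allowed in the definition of singularity; the description of $\mathcal E$ in the text is just loosely indexed, as the remark following the proposition shows by treating $l=1$ as the case $\alpha=\phi$). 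Concretely, $(1,1,-1,-1)A(\phi)=0$, so the functional equation at $\phi$ gives $g_1(\phi)+g_2(\phi)-g_3(\phi)-g_4(\phi)=0$: thus ${\rm Rel}_{\Q}\bigl(g_1(\phi),\ldots,g_4(\phi)\bigr)\neq\{0\}$ even though the $g_i$ are $\Q(z)$-linearly independent, and Corollary~\ref{coro: pphLineaire} cannot be invoked at $\phi$ (it would be contradicted by this very relation). Hence your conclusion $\boldsymbol\omega'A_l(\alpha)=0$ does not follow; the transport only gives $\boldsymbol\omega'A_l(\alpha)\in{\rm Rel}_{\Q}\bigl(g_1(\phi),\ldots,g_4(\phi)\bigr)$, which is a nonzero space.

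This is not a cosmetic point. With your reading, the factors of $A_l(\alpha)$ are $A(\alpha),A(\alpha^3),\ldots,A(\alpha^{3^{l-1}})$, none of which is evaluated at a zero of $\det A$, so $A_l(\alpha)$ is invertible; your conclusion $\boldsymbol\omega'A_l(\alpha)=0$ would then force $\boldsymbol\omega'=0$ and the proposition would assert that the number is never algebraic, which is refuted at $\alpha=\phi$ with $\boldsymbol\omega=(1,1,-1,-1)$, where the value is $0$. The nontrivial relations come precisely from the kernel of an iterate of $A$ whose product passes through the singular value $\phi$ (e.g.\ ${\rm ker}_{\Q}A(\phi)={\rm Vect}_{\Q}\{(1,1,-1,-1)\}$), and your shortcut erases exactly that contribution. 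The paper's proof therefore does not use Corollary~\ref{coro: pphLineaire} at $\phi$: it applies Theorem~\ref{thm: StructureRelationsLineaires} at $\alpha$ itself (for a suitable iterate), whose kernel term ${\rm ker}_{\Q}A_l(\alpha)$ is exactly what survives at singular points, combined with ${\rm Rel}_{\Q(z)}(g_1,\ldots,g_4)=\{0\}$. The \emph{heavier alternative} you mention and set aside in your last sentence is in fact the necessary route, not an optional one.
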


\begin{proof}
Posons $\beta: = \omega_1g_1(\alpha) + \omega_2g_2(\alpha) +\omega_3 g_3(\alpha)  + \omega_4 g_4(\alpha) \in \Q$. 
En utilisant  la relation $g_1(z) + g_2(z) + g_3(z) + g_4(z) = 1/(1-z)$, 
il vient :  
$$
\left(\omega_1 - \beta(1-\alpha)\right) g_1(\alpha) + \cdots +\left(\omega_4 - \beta(1-\alpha)\right) g_4(\alpha)=0 \,.
$$
Par hypoth\`ese, les nombres $\omega_i - \beta(1-\alpha)$ ne sont pas tous nuls. 
D'autre part, les fonctions $g_1(z),g_2(z),g_3(z)$ et $g_4(z)$ sont lin\'eairement ind\'ependantes 
d'apr\`es le lemme \ref{lem: linind}. Donc, d'apr\`es le th\'eor\`eme \ref{thm: pphHomogene},  
le nombre $\alpha$ ne peut pas \^etre un point r\'egulier du syst\`eme. Il existe donc $l\geq 1$ tel que $\alpha^{3^l} = \phi$.  Le 
th\'eor\`eme \ref{thm: StructureRelationsLineaires} montre alors que le vecteur 
$$
{\boldsymbol \mu} := \left(\omega_1 - \beta(1-\alpha), \omega_2 - \beta(1-\alpha),\omega_3 - \beta(1-\alpha), \omega_4 - \beta(1-\alpha) \right) \, 
$$
appartient n\'ecessairement \`a $ \ker_{\Q} A_l(\alpha)$, 
ce qui permet de conclure. 
\end{proof}

\begin{rem}
Si l'on choisit $l=1$, c'est-\`a-dire si  $\alpha = \phi$, on obtient que $\ker_{\Q}A(\phi)$ est de dimension 1, engendr\'e par le vecteur $(1,1,-1,-1)$. 
On en d\'eduit facilement que les nombres $g_1(\phi),g_2(\phi),g_3(\phi)$ et $g_4(\phi)$ sont tous transcendants, bien que $\phi$ soit une singularit\'e du syst\`eme.  
Ce comportement est donc tr\`es diff\'erent du premier exemple. En effet, nous avons vu dans ce premier cas que les deux nombres $f_1(\alpha)$ et $f_2(\alpha)$ sont alg\'ebriques pour 
toute singularit\'e $\alpha$. 
\end{rem}



\bigskip

\noindent{\bf \itshape Remerciements.}\,\,---  Les auteurs remercient Patrice Philippon de leur avoir communiqu\'e une version pr\'eliminaire de 
la pr\'epublication \cite{PPH}, ainsi que Jean-Paul Allouche pour ses remarques concernant une premi\`ere version de ce texte.  
Ils remercient \'egalement Jason Bell pour leur avoir indiqu\'e 
 la monographie de Lang \cite{La} comme r\'ef\'erence possible pour le r\'esultat utilis\'e au d\'ebut de la d\'emonstration du lemme \ref{lem: reg}.



\appendix

\section{Conjecture de Cobham et nombres automatiques}\label{sec: cob}

La th\'eorie des $E$-fonctions, introduite par Siegel, doit sans conteste son succ\`es \`a la transcendence des nombres $e$ et $\pi$, ainsi qu'au   
th\'eor\`eme de Lindemann--Weierstrass.  On ne conna\^it en revanche aucune constante math\'ematique classique li\'ee \`a la valeur en un point alg\'ebrique 
d'une fonction mahl\'erienne. Cela explique sans doute le faible d\'eveloppement qu'a connu la th\'eorie de Mahler  
durant presque cinquante ans. Pourtant, d\`es 1968, Cobham jeta un pont entre la th\'eorie des automates finis et la transcendance, offrant  
ainsi \`a la th\'eorie de Mahler le probl\`eme n\'ecessaire \`a son d\'eveloppement. 
Nous rappelons bri\`evement ici l'histoire de ce probl\`eme qui a \'et\'e la source principale de motivation pour ce travail.    

\subsection{La suite de schiffres des nombres alg\'ebriques}

L'\'etude de la suite des chiffres de constantes math\'ematiques classiques comme 
$$
\begin{array}{cccc}
&\sqrt 2 &= &1.414\,213\, 562\, 373\, 095\, 048\, 801\, 688\, 724\, 209\, 698\, 078\, 569 \cdots \\ 
\text{ou} & &&\\
&  \pi& = &3.141\, 592\, 653\, 589\, 793\, 238\, 462\, 643\, 383\, 279\, 502\,884\, 197  \cdots 
\end{array}
$$
est source  de myst\`ere et de frustration depuis des d\'ecennies.   
Tandis que ces nombres admettent une desciption g\'eom\'etrique particuli\`erement simple, leurs 
suites de chiffres semblent au contraire \^etre le reflet de ph\'enom\`enes complexes. 
Plusieurs langages ont \'et\'e utilis\'es afin de formaliser ce constat : celui des probabilit\'es par \'E. Borel \cite{Bo09}, 
celui des syst\`emes dynamiques topologiques par Morse et Hedlund \cite{HM38} et enfin celui des machines de Turing et de la complexit\'e algorithmique par 
Hartmanis et Stearns \cite{HS}.  Chacun de ces points de vue conduit \`a son propre floril\`ege de conjectures, le plus souvent hors d'atteinte. 

On sait depuis Turing \cite{Turing} que les nombres r\'eels peuvent \^etre grossi\`erement divis\'es en deux cat\'egories. 
D'un c\^ot\'e, les nombres calculables,  dont le d\'eveloppement binaire ou d\'ecimal peut \^etre produit par une machine de  Turing, 
et de l'autre, les nombres incalculables, dont la complexit\'e \'echapera \`a jamais \`a la sagacit\'e des ordinateurs.  
Alors que la plupart des nombres ne sont pas calculables, les constantes math\'ematiques classiques, comme les nombres alg\'ebriques,  
le sont g\'en\'eralement.  En 1965, Hartmanis et Stearns \cite{HS} ont \'et\'e parmi les premiers \`a consid\'erer la question fondamentale 
de la difficult\'e du calcul d'un nombre r\'eel, introduisant les classes de complexit\'e en temps. 
La notion de complexit\'e en temps rend compte du nombre d'op\'erations \'el\'ementaires n\'ecessaires \`a une 
 machine de Turing d\'eterministe \`a plusieurs rubans pour produire les $n$ premiers chiffres du d\'eveloppement binaire d'un nombre donn\'e.   
Un nombre r\'eel est alors consid\'er\'e comme d'autant plus simple que ses chiffres  
peuvent \^etre calcul\'es rapidement par une machine de Turing. \`A la fin de leur article, Hartmanis et Stearns pose la question suivante : 

\medskip

{\it Existe-t-il des nombres alg\'ebriques irrationnels pour lesquels les $n$ premiers chiffres binaires peuvent \^etre calcul\'es en $O(n)$ 
op\'erations par une machine de Turing d\'eterministe \`a plusieurs rubans ?}

\medskip

Malheureusement, ce probl\`eme demeure encore largement ouvert (voir la discussion dans \cite{ACL}).

\subsection{Automates finis et m\'ethode de Mahler} 
En 1968, Cobham \cite{Co68c,Co68b,Cob68} r\'edige une s\'erie de rapports dans lesquels il propose de restreindre le probl\`eme de Hartmanis--Stearns 
\`a certaines classes de machines de Turing et en premier lieu aux automates finis. 
Dans \cite{Cob68}, Cobham \'enonce le \og th\'eor\`eme\fg{} suivant sans en donner de d\'emonstration. Le th\'eor\`eme \ref{thm: baker} 
en fournit finalement une, presque cinquante ans plus tard.

\medskip

\noindent {\bf\itshape \og Th\'eor\`eme\fg.} --- 
\emph{ Soient $f_1(z),\ldots,f_n(z)\in \mathbb Q\{z\}$  des fonctions solutions 
d'un syst\`eme du type (\ref{eq: systeme}) et analytiques sur le disque unit\'e ouvert. Soit $\alpha$, $0<\vert \alpha \vert <1$ un nombre rationnel. 
Alors pour tout $\lambda_1,\ldots,\lambda_n\in \mathbb Q$, le nombre 
$$
\lambda_1f_1(\alpha) + \cdots + \lambda_nf_n(\alpha)
$$ 
est soit rationnel, soit transcendant. 
}

\medskip
 
Cobham  montre dans \cite{Cob68} que si une suite $(a_n)_{n\geq 0}$ peut \^etre engendr\'ee par un automate fini, alors la s\'erie g\'en\'eratrice 
$f(z) := \sum_{n\geq 0}a_nz^n$ est mahl\'erienne. Cela jette un pont entre la m\'ethode de Mahler 
et la complexit\'e de la suite des chiffres des nombres alg\'ebriques. 
En particulier, cela montre que le \og th\'eor\`eme\fg{} implique le corollaire suivant.  Ce corollaire, devenu la conjecture de Cobham, 
fournira une motivation importante au d\'eveloppement de la m\'ethode de Mahler \`a partir des ann\'ees 80 et de la popularisation 
par Mend\`es France des travaux de Cobham aupr\`es 
des sp\'ecialistes de transcendance.  
 
 \medskip

\noindent {\bf\itshape  \og Corollaire\fg{} (conjecture de Cobham).} --- 
\emph{ Le d\'eveloppement dans une base enti\`ere d'un nombre alg\'ebrique irrationnel ne peut \^etre engendr\'e par un automate fini.
}

\medskip

\`A ce stade, il est int\'eressant de noter que Cobham ignorait vraisemblablement l'existence des travaux de Mahler. 
Il avait seulement connaissance de ceux de Siegel concernant les $E$-fonctions \`a travers le livre de Gel'fond \cite{Ge}.  
C'est donc de fa\c con ind\'ependante qu'il a red\'ecouvert les \'equation fonctionnelles mahl\'eriennes et c'est 
l'analogie avec la th\'eorie des $E$-fonctions qui l'a pouss\'e \`a conjecturer, avec une remarquable clairvoyance, l'alternative   
fondamentale donn\'ee dans son \og th\'eor\`eme\fg.
La conjecture de Cobham a finalement \'et\'e prouv\'ee dans \cite{AB07} par une approche totalement 
diff\'erente qui repose sur l'utilisation d'un outil diophantien puissant : une version $p$-adique du th\'eor\`eme du sous-espace (voir \cite{AB07,ABL}).  
Plus g\'en\'eralement, dans la direction du th\'eor\`eme, l'alternative $f(\alpha)$ est soit dans $\mathbb Q(\alpha)$, soit transcendant, a \'et\'e d\'emontr\'ee :
\begin{itemize}

\medskip
\item dans le cas o\`u $f(z)$ est une s\'erie automatique et $\alpha$  est l'inverse d'un nombre de Pisot ou 
de Salem par le premier auteur et Bugeaud \cite{AB07}.   

\medskip

\item dans le cas o\`u $f(z)$ est une s\'erie r\'eguli\`ere et $\alpha$ est l'inverse d'un entier par Bell, Bugeaud et Coons \cite{BBC}.   
\medskip

\end{itemize}
Ces r\'esultats n\'ecessitent \'egalement l'utilisation  du th\'eor\`eme du sous-espace $p$-adique et ne rel\`eve donc pas directement de la m\'ethode de Mahler. 

Dans les ann\'ees $80$, plusieurs auteurs, et en particulier Loxton et van der Poorten \cite{LvdP82,LvdP88,Lox88}, 
ont essay\'e de d\'emontrer le th\'eor\`eme \ref{thm: nishioka} de Nishioka. Ces derniers ont par ailleurs affirm\'e que la conjecture de Cobham en d\'ecoulerait. Or,  
il y a clairement deux obstructions majeures \`a une telle implication :   

\medskip

\begin{itemize}

\item[{\rm (i)}]   \'Etant  donn\'ee une s\'erie automatique transcendante $f(z)$,  on peut toujours trouver un syst\`eme mahl\'erien et un vecteur de solution 
$f_1(z):=f(z), f_2(z),\ldots,f_n(z)$.  Si $\alpha$ n'est pas une singularit\'e du syst\`eme,  la transcendance de $f(z)$ donne seulement, avec le th\'eor\`eme \ref{thm: nishioka}, 
qu'au moins l'un des nombres 
$f(\alpha),f_2(\alpha), \ldots,f_n(\alpha)$ est transcendant, \`a moins que les fonctions $f_1(z),\ldots,f_n(z)$ ne soient alg\'ebriquement ind\'ependantes.  
Mais il n'y a aucune raison {\it a priori} pour qu'une telle condition soit v\'erifi\'ee.

\medskip

\item[{\rm (ii)}]  M\^eme si l'on parvient \`a s'extraire du point pr\'ec\'edent, il se pourrait que le point $\alpha$ soit une singularit\'e du syst\`eme \'etudi\'e. 

\end{itemize}

\medskip

Le th\'eor\`eme \ref{thm: pph} de Philippon permet de surmonter le point (i) puisque l'on peut 
toujours s'assurer, quitte \`a r\'eduire le syst\`eme et y ajouter la fonction constante \'egale \`a $1$, que les fonctions $1,f_1(z):=f(z),\ldots,f_n(z)$ sont 
lin\'eairement ind\'ependantes.  
On en d\'eduit alors l'ind\'ependance lin\'eaire sur $\Q$ des nombres 
$1,f(\alpha),f_2(\alpha),\ldots,f_n(\alpha)$, ce qui donne  imm\'ediatement la transcendance de $f(\alpha)$, lorsque $\alpha$ n'est pas une singularit\'e du syst\`eme.  
Le fait que l'on puisse \'egalement s'affranchir du point (ii) en comprenant la nature de $f(\alpha)$ \'egalement aux points singuliers du syst\`eme est, en un sens, l'objet principal de cet article (voir les d\'emonstrations des th\'eor\`emes \ref{thm: baker} et 
\ref{thm: StructureRelationsLineaires}). 

\subsection{D\'eveloppements des nombres alg\'ebriques dans une base alg\'ebrique}

L'utilisation du th\'eor\`eme du sous-espace est plus souple que la m\'ethode de Mahler car elle ne requiert la pr\'esence d'aucune \'equation 
fonctionnelle. Par contre, lorsqu'elle s'applique, la m\'ethode de Mahler a plusieurs avantages.  
Elle donne lieu \`a des r\'esultats d'ind\'ependance alg\'ebrique et elle permet \'egalement de traiter le cas de toutes les bases alg\'ebriques et 
pas seulement celles qui sont des nombres de Pisot ou de Salem. 

Soit $\beta>1$ un nombre r\'eel non entier.  
 On d\'efinit l'application  $T_\beta$ sur $[0,1]$ par  
 $T_{\beta} : x\longmapsto \beta x \hbox{ mod } 1$. 
 Le $\beta$-d\'eveloppement d'un nombre $x\in [0,1[$, not\'e  $d_{\beta}(x)$, 
 est alors d\'efinit par :
$$
d_{\beta}(x) :=0.x_1x_2\cdots \, ,
$$
o\`u $x_i=\lfloor \beta T^{i-1}_{\beta}(x)\rfloor$. 
Ce d\'eveloppement co\"incide avec celui obtenu en utlisant l'algorithme glouton. 
Les chiffres $x_i$ appartiennent \`a l'ensemble  $\{0,1,\ldots, \lfloor\beta\rfloor\}$. 
Un exemple classique est le d\'eveloppement en base $\varphi$ (le nombre d'or). 
Dans ce cas pr\'ecis, la diff\'erence avec une base enti\`ere n'est pas flagrante, mais, de fa\c con g\'en\'erale, l'\'etude des bases alg\'ebriques 
est bien plus complexe que celle des bases enti\`eres.   
Le plus souvent, on ne sait par exemple m\^eme pas 
caract\'eriser les nombres ayant un $\beta$-d\'eveloppement ultimement p\'eriodique. 
Certaines bases, comme la base $3/2$, semble particuli\`erement retorse et nos connaissances sont alors 
tr\`es limit\'ees. Le corollaire \ref{cor: alt} donne directement le r\'esultat g\'en\'eral suivant. 

\begin{coro}
Soit $\beta>1$ un nombre alg\'ebrique r\'eel et $\alpha$ un nombre r\'eel alg\'ebrique n'appartenant pas \`a $\mathbb Q(\beta)$. 
Alors le $\beta$-d\'eveloppement de $\alpha$ ne peut \^etre engendr\'e par un automate fini.  
\end{coro}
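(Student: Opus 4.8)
Le plan est de d\'eduire ce r\'esultat directement du corollaire \ref{cor: alt}, en interpr\'etant $\alpha$ comme la valeur d'une fonction mahl\'erienne au point alg\'ebrique $1/\beta$. On se ram\`ene d'abord au cas o\`u $\alpha\in[0,1[$ : le $\beta$-d\'eveloppement de $\alpha$ et celui de sa partie fractionnaire $\{\alpha\}_\beta$ ne diff\`erent que d'un nombre fini de chiffres, et la diff\'erence $\alpha-\{\alpha\}_\beta$, somme finie de puissances enti\`eres de $\beta$, appartient \`a $\mathbb Q(\beta)$ ; ainsi $\alpha$ est alg\'ebrique (resp.\ appartient \`a $\mathbb Q(\beta)$) si et seulement si $\{\alpha\}_\beta$ l'est (resp.\ y appartient), et la suite des chiffres de l'un est automatique si et seulement si celle de l'autre l'est. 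On peut donc supposer $\alpha=\sum_{i\geq 1}x_i\beta^{-i}$, o\`u $0.x_1x_2\cdots$ d\'esigne le $\beta$-d\'eveloppement de $\alpha$, les chiffres $x_i$ prenant un nombre fini de valeurs enti\`eres (dans $\{0,1,\ldots,\lfloor\beta\rfloor\}$).

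Ensuite, je raisonnerais par l'absurde en supposant que la suite $(x_i)_{i\geq 1}$ est engendr\'ee par un automate fini, lisant disons les repr\'esentations en base $q$ pour un certain entier $q\geq 2$. D'apr\`es le r\'esultat de Cobham rappel\'e dans l'appendice \ref{sec: cob} (voir \cite{Cob68}), la s\'erie g\'en\'eratrice $f(z):=\sum_{i\geq 1}x_iz^i$ est alors $q$-mahl\'erienne ; comme ses coefficients sont des entiers born\'es, $f(z)\in\mathbb Z\{z\}$ est holomorphe sur le disque unit\'e ouvert. Il reste \`a v\'erifier les hypoth\`eses du corollaire \ref{cor: alt} pour le point $\gamma:=1/\beta$ : ce nombre est alg\'ebrique, v\'erifie $0<|\gamma|<1$ puisque $\beta>1$ est r\'eel, et n'est pas un p\^ole de $f$ puisque $f$ est holomorphe sur le disque unit\'e ouvert ; enfin $\mathbb Q(\beta)=\mathbb Q(\gamma)$ est un corps de nombres contenant $\gamma$ ainsi que tous les coefficients de $f$.

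Le corollaire \ref{cor: alt} fournit alors l'alternative : soit $f(\gamma)$ est transcendant, soit $f(\gamma)\in\mathbb Q(\beta)$. Or $f(\gamma)=\sum_{i\geq 1}x_i\gamma^i=\alpha$ est alg\'ebrique par hypoth\`ese ; on en d\'eduit $\alpha\in\mathbb Q(\beta)$, ce qui contredit l'hypoth\`ese $\alpha\notin\mathbb Q(\beta)$ et ach\`eve la preuve. Il n'y a pas v\'eritablement d'obstacle technique dans cette d\'eduction : toute la substance arithm\'etique est d\'ej\`a contenue dans le corollaire \ref{cor: alt} (lui-m\^eme cons\'equence du th\'eor\`eme \ref{thm: baker}) et dans le th\'eor\`eme de Cobham assurant que la s\'erie g\'en\'eratrice d'une suite automatique est mahl\'erienne. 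Les seuls points demandant un peu de soin sont la r\'eduction \`a la partie fractionnaire de $\alpha$ et la convention identifiant \og{}$\beta$-d\'eveloppement engendr\'e par un automate fini\fg{} \`a l'\'enonc\'e que la suite des chiffres est une suite automatique au sens usuel.
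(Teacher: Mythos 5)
Votre démonstration est correcte et suit exactement la voie prévue par le papier, qui énonce ce corollaire comme conséquence directe du corollaire \ref{cor: alt} : on interprète la série génératrice des chiffres, mahlérienne d'après Cobham, au point algébrique $1/\beta$, et l'alternative \og{}transcendant ou dans ${\bf k}=\mathbb Q(\beta)$\fg{} donne la contradiction. Les précautions que vous prenez (réduction à la partie fractionnaire, holomorphie de $f$ sur le disque unité, choix du corps $\mathbb Q(\beta)=\mathbb Q(1/\beta)$) sont exactement les vérifications implicites dans le texte.
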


Ainsi, le d\'eveloppement de $\sqrt 2$ en base $3/2$  ne peut \^etre engendr\'e par un automate fini.  Il semble difficile d'obtenir ce r\'esultat 
en utilisant l'approche de \cite{AB07}. Une raison technique pour cela est que la hauteur du nombre $3/2$ est \'egale \`a $3$ qui est strictement sup\'erieur \`a 
$3/2$.


\end{document}